\definecolor{red}{rgb}{0.9,0,0}
\definecolor{green}{rgb}{0,0.9,0}
\definecolor{blue}{rgb}{0,0,0.9}
 \def\R{{\cal R}}
\def\tilde{\widetilde}
\def\Limsup{\mathop{{\rm Lim}\,{\rm sup}}}
\newtheorem{theorem}{Theorem}[section]
\newtheorem{proposition}{Proposition}[section]
\newtheorem{lemma}{Lemma}[section]
\newtheorem{corollary}{Corollary}[section]
\newtheorem{definition}{Definition}[section]
\newtheorem{example}{Example}[section]
\def\sce{\setcounter{equation}{0}}
\numberwithin{equation}{section}
\begin{document}

\title{\bf Complete Characterizations of Well-Posedness in Parametric Composite Optimization}

\author{Boris S. Mordukhovich\footnote{Department of Mathematics and Institute for Artificial Intelligence and Data Science, Wayne State University, Detroit, MI 48202 (aa1086@wayne.edu).} \quad Peipei Tang\footnote{Corresponding author. School of Computer and Computing Science, Hangzhou City University, Zhejiang Key Laboratory of Big Data Intelligent Computing, Hangzhou, 310015, China (tangpp@hzcu.edu.cn).}\quad Chengjing Wang\footnote{School of Mathematics, Southwest Jiaotong University, Chengdu, China (renascencewang@hotmail.com).}}

\maketitle

\vspace*{-0.3in}
\begin{center}
{\bf Dedicated to the memory of Franco Giannessi,\\ an outstanding mathematician and human being}
\end{center}

\begin{abstract}
\noindent This paper characterizes the well-posedness of Karush-Kuhn-Tucker system for perturbed composite optimization. Using the parabolic regularity, we introduce a novel second-order variational function, shown to be the pivotal object governing second-order behavior. This foundational result yields the strong second-order sufficient condition introduced here for the general class of composite optimization problems to naturally extend the classical second-order sufficient condition in nonlinear programming. Then we obtain several equivalent characterizations of the second-order qualification condition (SOQC) and  highlight its equivalence to the constraint nondegeneracy condition under the $\mathcal{C}^{2}$-cone reducibility assumption. These insights lead us to multiple equivalent conditions for the major Lipschitz-like/Aubin property of KKT systems, including SOQC combined with the new second-order subdifferential condition and SOQC combined with tilt stability of local minimizers. Under the $\mathcal{C}^{2}$-cone reducibility, we settle the long-standing question by proving the equivalence between the Aubin property of KKT systems around local minimizers and the classical notion  of strong regularity  under some additional assumptions. Finally, we demonstrate that the Lipschitz-like property is equivalent to the nonsingularity of the generalized Jacobian associated with the KKT system under a certain verifiable assumption. These results provide a unified and rigorous framework for analyzing stability and sensitivity of solutions to composite optimization problems, as well as for the design and justification of numerical algorithms.

\end{abstract}\vspace*{-0.1in}
{\bf Keywords:}
variational analysis and generalized differentiation, well-posedness, parabolic regularity, second-order sufficient conditions, tilt stability
\vspace*{0.1in}
\\{\bf Mathematics Subject Classification (2020)} 49J52, 49J53, 90C31
\vspace*{-0.2in}

\section{Introduction}\label{intro}\sce\vspace*{-0.05in}

This paper addresses the areas of variational analysis, generalized differentiation, and stability in optimization in which contributions by Professor Giannessi are difficult to overstate. We particularly concern the study of the general class of {\em composite optimization} problems given by 
\begin{align}\label{comp-prob}
\qquad\min_{x\in\mathcal{R}^{n}} \varphi(x):=h(x)+g(F(x)), 
\end{align}
where $h:\mathcal{R}^{n}\rightarrow\mathcal{R}$, $F:\mathcal{R}^{n}\rightarrow\mathcal{R}^{m}$ are ${\cal C}^{2}$-smooth mappings and $g:\mathcal{R}^{m}\rightarrow \overline{\mathcal{R}}:=\mathcal{R}\cup\{\infty\}$ is an extended-real-valued lower semicontinuous (l.s.c.) convex function, which is always assumed to be proper, i.e., $g(x)>-\infty$ for all $x\in\mathcal{R}^{m}$ with ${\rm dom}\,g:=\{x\in\mathcal{R}^{m}\;|\;g(x)<\infty\}\ne\emptyset$. The extended-real-valued format in \eqref{comp-prob} allows us to incorporate various constraints via $F(x)\in{\rm dom}\,g$. In particular, model \eqref{comp-prob} covers problems of nonlinear programming, second-order cone programming, and semidefinite programming that correspond to the indicator function $g$ of the corresponding cone, as well as other classes of constrained problems considered below.

We study the {\em perturbed} version of \eqref{comp-prob}
with the parameters $a\in\mathcal{R}^n$ and $b\in\mathcal{R}^m$ written as
\begin{align}\label{comp-prob-perturb}
\qquad\min_{x\in\mathcal{R}^{n}} \tilde{\varphi}(x,b)-\langle a,x\rangle, 
\end{align}
where $\tilde{\varphi}(x,b):=h(x)+g(F(x)+b)$. The {\em KKT system} (necessary optimality conditions) associated with the canonically perturbed problem \eqref{comp-prob-perturb} takes the form 
\begin{align}\label{eq:kkt-perturbed-cop}
\nabla_{x}L(x,u)=a,\quad b\in\partial g^{*}(u)-F(x),
\end{align}
where $g^*$ is the {\em Fenchel conjugate} of $g$, and where $L(\cdot,\cdot)$ is the {\em Lagrangian} of \eqref{comp-prob} defined by 
\begin{align*}
L(x,u):=h(x)+\langle u,F(x)\rangle,\quad (x,u)\in\mathcal{R}^{n}\times\mathcal{R}^{m}.
\end{align*}
Denote by $S_{\operatorname{KKT}}(a,b)$ the parameterized {\em solution mapping} for \eqref{eq:kkt-perturbed-cop} and by $\Phi$ the {\em KKT mapping}
\begin{equation}\label{Phi}
\Phi(x,u):=\left[\begin{array}{c}
\nabla_{x}L(x,u)\\
\partial g^{*}(u)-F(x)
\end{array}\right].
\end{equation}

During the past few decades, great many efforts have been made to investigate diverse {\em well-posedness} issues for constraint and variational systems under perturbations; see, e.g., the monographs \cite{Bonnans2000,DontchevandRockafellar2014,KlatteKummer2002,Mordukhovich2006,Mordukhovich2018,Mordukhovich2024,Rockafellar1998} and the references therein. In his landmark work \cite{Robinson1980}, Robinson introduced the notion of {\em strong regularity} for the generalized equation $0\in\Phi(x,u)$ at $(\bar{x},\bar{u})$, which means that the inverse of the linearized mapping
\begin{equation}\label{linearized-Phi}
(x,u)\mapsto\left[\begin{array}{c}
\nabla_{xx}^{2}L(\bar{x},\bar{u})(x-\bar{x})+\nabla F(\bar{x})^{T}(u-\bar{u})\\
-F(\bar{x})-\nabla F(\bar{x})(x-\bar{x})
\end{array}\right]+\left[\begin{array}{c}
0\\
\partial g^{*}(u)
\end{array}\right]
\end{equation} 
admits a Lipschitz continuous single-valued localization around the origin, where ``$A^T$" stands for the matrix transposition. Characterizing the strong regularity of perturbed KKT systems at their solution points has emerged as a significant research topic in mathematical programming. In the context of nonlinear programming (NLP), it is established in \cite[Theorem~4.1]{Robinson1980} and \cite[Theorem 4.10]{BonnansSulem1995} that the strong regularity of the canonically perturbed KKT system is equivalent to the simultaneous fulfillment of the strong second-order sufficient condition (SSOSC) and the linear independence constraint qualification (LICQ), as well as \cite[Theorem~3.1]{Robinson1980} to the nonsingularity of  the associated Clarke's generalized Jacobian of the KKT system at the corresponding solution point.  

As shown in  \cite[Theorem~5]{DontchevRockafellar1996}, the strong regularity of  KKT systems associated with NLPs is equivalent to the Lipschitzian behavior of mappings that was first revealed by Aubin \cite{Aubin1984} in the context of convex programming under the name of ``pseudo-Lipschitz property". This notion, which is a graphical localization of the classical (Hausdorff) Lipschitz continuity of set-valued mappings, is now called the {\em Aubin} or {\em Lipschitz-like property} to emphasize its Lipschitzian nature (while ``pseudo" means ``false"). It has been well realized that such Lipschitzian behavior of set-valued mappings/multifunctions is equivalent to the two major well-posedness concepts in nonlinear and variational analysis known as {\em metric regularity} and {\em linear openness/covering} applied to the mapping inverse; see, e.g., \cite{Mordukhovich2006,Rockafellar1998} for more details and references. 

Developing the line of \cite{Robinson1980}, the equivalence between the strong regularity and appropriate versions of SSOSC and nondegeneracy conditions is established in \cite[Theorem~30]{BonnansRamirezC2005} for second-order cone programming, while the equivalence between the strong regularity and Aubin property for such programs is obtained in \cite[Theorem~21]{OutrataJivriv2011} (see also the Errarum in \cite{Opazo}) under the strict complementarity condition and  in  \cite[Theorem~4.12]{ChenChenSunZhu2025} without the latter. For semidefinite programming, similar results are available in \cite[Theorem~4.1]{Sun2006} and \cite[Theorem~4.1]{ChenChenSunZhang2025}. For general constrained optimization problems, the results of \cite[Theorem~5.6]{MordukhovichNghiaRockafellar2015} (see also \cite[Section~5.5]{Mordukhovich2024}) establish equivalence among the properties of strong regularity, Lipschitzian strong stability, Lipschitzian full stability, and second-order subdifferential conditions at local minimizers under the reducibility and nondegeneracy conditions. Subsequently, \cite[Theorem~3.3]{Mordukhovich2017} derives the equivalence between the notions of Lipschitzian full stability for local minimizers under some nondegeneracy and of strong regularity of the associated KKT system for a class of composite optimization problems involving convex piecewise linear functions. The recent paper \cite{Tangwang2024} proposes a definition of the generalized SSOSC and demonstrates the equivalence between the introduced SSOSC in conjunction with the nondegeneracy condition, the nonsingularity of Clarke’s generalized Jacobian of the nonsmooth system at a  given KKT point, and the strong regularity of the KKT point for composite optimization problems. This comprehensive body of research demonstrates fundamental connections between various regularity and stability concepts in optimization theory, providing a unified framework for analyzing well-posedness behavior of optimization problems and their solution structures.

Recall \cite{Bonnans2000} that the nonnegative orthant cone, the second-order cone and the positive semidefinite cone are $\mathcal{C}^{2}$-cone reducible. This naturally raises the question of whether the Lipschitz-like property of the KKT system is equivalent to the strong regularity of solution points for all $\mathcal{C}^{2}$-cone reducible problems and also in more general settings. While significant steps in this direction have been recently made in \cite{ChenChenSunZhu2025,ChenChenSunZhang2025} for second-order cone and semidefinite programming based on the Mordukhovich criterion, it is mentioned in the conclusion of \cite{ChenChenSunZhang2025} that  ``currently, it is not clear to us how to extend these results to the generic non-polyhedral ${\cal C}^2$-cone reducible constrained optimization." Nevertheless, the latest preprint \cite{sun25} provides the equivalence between the Aubin property and strong regularity for certain ${\cal C}^2$-cone reducible generalized equations by using the degree theory, without any appeal to variational analysis.

In this paper, we employ and develop advanced tools of variational analysis and generalized differentiation that allow us to obtain comprehensive characterizations of well-posedness properties of the KKT systems \eqref{eq:kkt-perturbed-cop} with ${\cal C}^2$-cone reducible functions $g$. In particular, we establish in this way the equivalence between the strong regularity and Aubin property for the KKT system under certain additional  assumptions different from \cite[Theorem~4]{sun25} that is applicable to our model when $g$ is the indicator function of a ${\cal C}^2$-cone reducible set.

As has been well realized, primal and dual second-order conditions play a pivotal role in characterizing well-posedness of optimization problems. In what follows, we systematically exploit the powerful property of {\em parabolic regularity} for the function $g$ in \eqref{comp-prob-perturb} that has been largely investigated in the recent publications \cite{MohammadiMordukhovichSarabi2020,Mohammadi2021,Mohammadi2022,Mohammadi2020}. To derive our main results, we provide a precise description of the second-order qualification condition (SOQC) and establish its equivalence to the nondegeneracy condition under the $\mathcal{C}^{2}$-cone reducibility. Leveraging the properties of isolated calmness and tilt stability, we characterize the Aubin property based on three distinct conditions. Under the parabolic regularity of $g$, we show that this property is equivalent to SOQC combined with the new second-order subdifferential condition, and to SOQC combined with the tilt stability of the reference local minimizer for \eqref{comp-prob}. Imposing the $\mathcal{C}^{2}$-cone reducibility condition, we prove that the Aubin property of the reference KKT system, being combined with the assumption that the primal part of the local solution to the canonically perturbed KKT is a local minimizer of \eqref{comp-prob}, is equivalent to its strong regularity. Moreover, under the parabolic regularity of $g$ and a certain mild assumption, which holds for many common functions such as the indicator function of the positive semidefinite cone, the nuclear norm function, and the spectral norm functions, we establish the equivalence between the Aubin property and the nonsingularity of the generalized Jacobian associated with the KKT system. These results provide novel and multifaceted perspectives on characterizing Lipschitzian behavior and related well-posedness properties, while opening new avenues for understanding their connections to various stability and regularity conditions in optimization theory and algorithms.

The remainder of this paper is organized as follows. Section~\ref{sec:preliminary} introduces and discusses fundamental concepts of variational analysis and generalized differentiation, which serve as basic tools for the subsequent developments. In Section~\ref{sec:srcq-nondegeneracy-sosc-ssosc}, we define  the constraint nondegeneracy condition as well as the general SOSC version for composite optimization problems. We also formulate the general version of SSOSC for composite models and establish its relationship with SOSC under the parabolic regularity. Section~\ref{sec:characterization-soqc} provides detailed characterizations of the SOQC property under the assumptions of parabolic regularity and $\mathcal{C}^{2}$-cone reducibility. Section~\ref{sec:equivalence-aubin-property} is devoted to deriving comprehensive equivalent characterizations of the well-posedness properties for the KKT system \eqref{eq:kkt-perturbed-cop}. Specifically, we establish equivalence of the Aubin property to (i) SOQC combined with the second-order subdifferential condition, to (ii) SOQC combined with the tilt stability, to (iii) the strong regularity, and to (iv) the nonsingularity of the KKT system under fairly mild assumptions. The concluding Section~\ref{sec:Conclusion} summarizes the main achievements of the paper and discusses some directions of our future research  and applications.

Our notation is standard in variational analysis; see \cite{Mordukhovich2006,Rockafellar1998}. The closed unit ball in $\mathcal{R}^{m}$ is denoted by $\mathbb{B}_{\mathcal{R}^{m}}$. For a given point $x\in\mathcal{R}^{m}$ and a number $\varepsilon>0$, the closed ball centered at $x$ with radius $\varepsilon$ is defined by $\mathbb{B}(x,\varepsilon):=\{u\in\mathcal{R}^{m}\, |\, \|u-x\|\leq\varepsilon\}$. Given a set $C\subset\mathcal{R}^{m}$,  the indicator function $\delta_{C}$ of $C$ is defined by $\delta_{C}(x):=0$ if $x\in C$ and $\delta_{C}(x):=\infty$ otherwise. For a point $x\in\mathcal{R}^{m}$, $\operatorname{dist}(x,C)$ denotes the distance from $x$ to the set $C$, while $\operatorname{\Pi}_{C}(x)$ stands for the projection of $x$ onto $C$. For a closed convex cone $C\ne\emptyset$, the lineality space of $C$ is $\operatorname{lin}C:=C\cap(-C)$ representing the largest subspace contained in $C$, and the affine hull $\operatorname{aff}C:=C-C$ is the smallest subspace containing $C$. Given a matrix $A\in\mathcal{R}^{m\times n}$, the kernel of $A$ is $\operatorname{ker}A:=\{x\in\mathcal{R}^{n}\,|\,Ax=0\}$, the range of $A$ is $\operatorname{rge}A$, and $A^{\dagger}$ denotes the Moore-Penrose inverse. The notation $C_1\subset C_2$ means that $C_1$ belongs or equals to $C_2$. \vspace*{-0.15in}

\section{Basic Tools of Variational Analysis}\label{sec:preliminary}\vspace*{-0.05in}

In this section, we recall some fundamental tools of variational analysis and generalized differentiation while referring  to the monographs \cite{Bonnans2000,Mordukhovich2006,Rockafellar1998} for more details.

Consider $f:\mathcal{R}^{m}\rightarrow\overline{\mathcal{R}}$ identified with the epigraph $\operatorname{epi}f:=\{(x,\alpha)\in\mathcal{R}^{m}\times\mathcal{R}\,|\,f(x)\leq\alpha\}$. 
The (Fr\'{e}chet) {\em regular subdifferential} of $f$ at $\bar{x}\in{\rm dom}\,f$ is defined by
\begin{equation}\label{rsub}
\widehat{\partial} f(\bar{x}):=\left\{v\in\mathcal{R}^{m}\,\left|\,\liminf_{x\rightarrow\bar{x}}\frac{f(x)-f(\bar{x})-\langle v,x-\bar{x}\rangle}{\|x-\bar{x}\|}\geq0\right.\right\},
\end{equation}
and the (Mordukhovich) {\em limiting subdifferential} of $f$ at $\bar{x}$ is
\begin{equation}\label{lsub}
\partial f(\bar{x}):=\big\{v\in\mathcal{R}^{m}\;\big|\,\exists\,x_k\to\bar x,\;v_k\to v\;\mbox{ with }\;f(x^{k})\rightarrow f(\bar{x}),\;v^{k}\in\widehat{\partial}f(x^{k})\big\}.
\end{equation}
Given an l.s.c.\ function $f$ and a number $\sigma>0$, the {\em proximal mapping} $\operatorname{Prox}_{\sigma f}$ corresponding to $f$ with parameter $\sigma$ is defined by
\begin{align}\label{prox}
\operatorname{Prox}_{\sigma f}(x)&:=\mathop{\arg\min}_{u\in\mathcal{R}^{m}}\Big\{f(u)+\frac{1}{2\sigma}\|u-x\|^{2}\Big\},\quad x\in\mathcal{R}^{m}.
\end{align}
When $f$ is convex, it follows from \cite[Proposition~13.37]{Rockafellar1998} that  \eqref{prox} is single-valued and Lipschitz continuous on $\mathcal{R}^{m}$. The (first-order) {\em subderivative} of $f:\mathcal{R}^{m}\rightarrow\overline{\mathcal{R}}$ at $\bar{x}\in{\rm dom}\,f$ in the direction $d\in\mathcal{R}^{m}$ is defined by
\begin{align*}
df(\bar{x})(d):=\liminf_{{\tau\downarrow0}\atop{d'\rightarrow d}}\frac{f(\bar{x}+\tau d')-f(\bar{x})}{\tau}.
\end{align*}
When $df(\bar{x})(d)$ is finite, the {\em parabolic subderivative} of $f$ at $\bar{x}$ relative to $d$ and $w\in\mathcal{R}^{m}$ is 
\begin{align}\label{parab-subder}
d^{2}f(\bar{x})(d,w):=\liminf_{{\tau\downarrow0}\atop{w'\rightarrow w}}\frac{f(\bar{x}+\tau d+\frac{1}{2}\tau^{2}w')-f(\bar{x})-\tau df(\bar{x})(d)}{\frac{1}{2}\tau^{2}}.
\end{align}
We say that $f$ is {\em parabolically epi-differentiable} at $\bar{x}$ for $d$ if $\operatorname{dom}d^{2}df(\bar{x})(d,\cdot)\neq\emptyset$ and for every $w\in\mathcal{R}^{m}$ and $\tau_{k}\downarrow 0$, there exists $w^{k}\rightarrow w$ such that
\begin{align*}
d^{2}f(\bar{x})(d,w)=\lim_{k\rightarrow\infty}\frac{f(\bar{x}+\tau_{k} d+\frac{1}{2}\tau_{k}^{2}w^{k})-f(\bar{x})-\tau_{k} df(\bar{x})(d)}{\frac{1}{2}\tau_{k}^{2}}.
\end{align*}
Associate with $f$ the parametric family of {\em second-order difference quotients} at $\bar{x}$ for $\bar{u}$ by
\begin{align*}
\Delta_{\tau}^{2}f(\bar{x},\bar{u})(d):=\frac{f(\bar{x}+\tau d)-f(\bar{x})-\tau\langle\bar{u},d\rangle}{\frac{1}{2}\tau^{2}}\quad\mbox{with}\ d\in\mathcal{R}^{m},\ \tau>0.
\end{align*}
The {\em second-order subderivative} of $f$ at $\bar{x}\in\operatorname{dom}f$ for $\bar{u},d\in\mathcal{R}^{m}$ is defined by
\begin{align}\label{2subder}
d^{2}f(\bar{x},\bar{u})(d):=\liminf_{{\tau\downarrow0}\atop{d'\rightarrow d}}\Delta_{\tau}^{2}f(\bar{x},\bar{u})(d').
\end{align}
The function $f$ is said to be {\em twice epi-differentiable} at $\bar{x}$ for $\bar{u}$ if for every sequence $\tau_{k}\downarrow0$ and every $d\in\mathcal{R}^{m}$, there exists $d^{k}\rightarrow d$ such that
\begin{align*}
d^{2}f(\bar{x},\bar{u})(d)=\lim_{k\rightarrow\infty}\Delta_{\tau_{k}}^{2}f(\bar{x},\bar{u})(d^{k}).
\end{align*}
The next property plays a crucial role in deriving the main results of this paper. \vspace*{-0.05in}

\begin{definition}\label{def:par-reg}
 It is said that $f:\mathcal{R}^{m}\rightarrow\overline{\mathcal{R}}$ is {\sc parabolically regular} at $\bar{x}\in\operatorname{dom}f$ for $\bar{u}\in\mathcal{R}^m$ in direction $d\in\mathcal{R}^m$ 
 if we have the second-order relationship
\begin{align*}
\inf_{w\in\mathcal{R}^{m}}\left\{d^{2}f(\bar{x})(d,w)-\langle\bar{u},w\rangle\right\}=d^{2}f(\bar{x},\bar{u})(d).
\end{align*}
\end{definition}

It has been realized that the parabolic regularity holds for broad classes of extended-real-valued functions; see \cite{Bonnans2000,MohammadiMordukhovichSarabi2020,Mohammadi2021,Mohammadi2022,Mohammadi2020,Rockafellar1998}. The following proposition demonstrates that the parabolic regularity of a convex function is inherited by the indicator function of its epigraph.\vspace*{-0.05in}

\begin{proposition}\label{prop-parabolically-regular-delta-epi-f}
Let $f:\mathbb{R}^{m}\rightarrow\overline{\mathcal{R}}$ be an l.s.c.\ convex function with $\bar{x}\in\operatorname{dom}f$, let $\bar{\mu}\in\partial f(\bar{x})$, and let $\lambda>0$, Suppose that $f$ is parabolically regular at $\bar{x}$ for $\bar{\mu}$ and parabolically epi-differentiable at $\bar{x}$ in every direction $d\in\{d\,|\,df(\bar{x})(d)=\langle\bar{\mu},d\rangle\}$ for $\bar{\mu}$. Then we have the equality
\begin{align*}
\operatorname{dom}d^{2}\delta_{\operatorname{epi}f}((\bar{x},f(\bar{x})),\lambda(\bar{\mu},-1))=\left\{(d,\gamma)\,\left|\,df(\bar{x})(d)=\langle\bar{\mu},d\rangle=\gamma\right.\right\}
\end{align*}
and deduce therefore that the indicator function $\delta_{\operatorname{epi}f}$ is parabolically regular at $(\bar{x},f(\bar{x}))$ in every direction $(d,\gamma)\in\left\{(d,\gamma)\,\left|\,df(\bar{x})(d)=\langle\bar{\mu},d\rangle=\gamma\right.\right\}$ for $\lambda(\bar{\mu},-1)$.
\end{proposition}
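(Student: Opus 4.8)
The plan is to reduce every second-order object of $\delta_{\operatorname{epi}f}$ to the corresponding one of $f$ at $\bar{x}$ for $\bar{\mu}$, exploiting the epigraphical geometry. Write $E:=\operatorname{epi}f$, $\bar{z}:=(\bar{x},f(\bar{x}))$, and $\bar{v}:=\lambda(\bar{\mu},-1)$. Since $\bar{\mu}\in\partial f(\bar{x})$ and $\lambda>0$, the vector $\bar{v}$ belongs to the normal cone $N_E(\bar{z})$, so $d^{2}\delta_E(\bar{z},\bar{v})$ is the second-order subderivative of an indicator in a normal direction, whose domain is always contained in the critical cone $K_E(\bar{z},\bar{v})=T_E(\bar{z})\cap\bar{v}^{\perp}$ (this is standard and easily checked: directions outside $T_E(\bar z)$ admit no feasible difference quotient, while those in $T_E(\bar z)$ with $\langle\bar v,\cdot\rangle<0$ drive the quotient to $+\infty$). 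First I would compute this critical cone. Using $T_E(\bar{z})=\operatorname{epi}df(\bar{x})$ and, since $\lambda>0$, $\bar{v}^{\perp}=\{(d,\gamma)\mid\langle\bar{\mu},d\rangle=\gamma\}$, one gets $K_E(\bar{z},\bar{v})=\{(d,\gamma)\mid df(\bar{x})(d)\le\gamma=\langle\bar{\mu},d\rangle\}$. Combining the constraint $df(\bar{x})(d)\le\langle\bar{\mu},d\rangle$ with the convex subgradient inequality $df(\bar{x})(d)\ge\langle\bar{\mu},d\rangle$ forces $df(\bar{x})(d)=\langle\bar{\mu},d\rangle=\gamma$, so that $K_E(\bar{z},\bar{v})$ coincides with the claimed set. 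This already yields the inclusion ``$\subseteq$'' in the asserted domain identity.

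For the reverse inclusion and the exact value I would establish the key formula
\begin{align*}
d^{2}\delta_E(\bar{z},\bar{v})(d,\gamma)=\lambda\,d^{2}f(\bar{x},\bar{\mu})(d)\qquad\text{for }(d,\gamma)\in K_E(\bar{z},\bar{v}),
\end{align*}
via a two-sided estimate of the difference quotient. A feasible contribution to $\Delta_{\tau}^{2}\delta_E(\bar{z},\bar{v})(d',\gamma')$ requires $f(\bar{x}+\tau d')\le f(\bar{x})+\tau\gamma'$, and then, using $\gamma=\langle\bar{\mu},d\rangle$ on the critical cone, the quotient equals $2\lambda(\gamma'-\langle\bar{\mu},d'\rangle)/\tau$. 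For the lower bound the epigraph constraint gives $\gamma'\ge(f(\bar{x}+\tau d')-f(\bar{x}))/\tau$, whence the quotient dominates $\lambda\,\Delta_{\tau}^{2}f(\bar{x},\bar{\mu})(d')$; passing to the liminf yields ``$\ge\lambda\,d^{2}f(\bar{x},\bar{\mu})(d)$''. For the upper bound I would invoke parabolic epi-differentiability of $f$ at $\bar{x}$ in the direction $d$ to extract, for each prescribed $w$ and each $\tau_{k}\downarrow0$, a recovery sequence $w^{k}\to w$ realizing $d^{2}f(\bar{x})(d,w)$; setting $d'_{k}:=d+\tfrac12\tau_{k}w^{k}$ and choosing $\gamma'_{k}$ so that $\bar{z}+\tau_{k}(d'_{k},\gamma'_{k})$ lies exactly on the boundary of $E$ makes the quotient converge to $\lambda(d^{2}f(\bar{x})(d,w)-\langle\bar{\mu},w\rangle)$. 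Taking the infimum over $w$ and applying parabolic regularity of $f$ gives ``$\le\lambda\,d^{2}f(\bar{x},\bar{\mu})(d)$''. Since parabolic regularity together with parabolic epi-differentiability guarantees that $\operatorname{dom}d^{2}f(\bar{x},\bar{\mu})$ equals $\{d\mid df(\bar{x})(d)=\langle\bar{\mu},d\rangle\}$ with finite values there, the formula shows $d^{2}\delta_E(\bar{z},\bar{v})$ is finite on $K_E(\bar{z},\bar{v})$, giving ``$\supseteq$'' and hence the domain equality.

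Finally, to deduce parabolic regularity of $\delta_E$ at $\bar{z}$ for $\bar{v}$ in each critical direction $(d,\gamma)$, I would compute the left-hand side of Definition~\ref{def:par-reg}. The parabolic subderivative of an indicator is the indicator of the second-order tangent set, $d^{2}\delta_E(\bar{z})((d,\gamma),(w,\eta))=\delta_{T^{2}_E(\bar{z};(d,\gamma))}(w,\eta)$, and parabolic epi-differentiability identifies $T^{2}_E(\bar{z};(d,\gamma))=\{(w,\eta)\mid d^{2}f(\bar{x})(d,w)\le\eta\}=\operatorname{epi}\big(d^{2}f(\bar{x})(d,\cdot)\big)$ for $(d,\gamma)$ on the critical cone (where $\gamma=df(\bar{x})(d)$). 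Substituting $\bar{v}=\lambda(\bar{\mu},-1)$ and minimizing first over $\eta\ge d^{2}f(\bar{x})(d,w)$ and then over $w$ collapses the left-hand side to $\lambda\inf_{w}\{d^{2}f(\bar{x})(d,w)-\langle\bar{\mu},w\rangle\}=\lambda\,d^{2}f(\bar{x},\bar{\mu})(d)$, again by parabolic regularity of $f$. By the key formula this equals $d^{2}\delta_E(\bar{z},\bar{v})(d,\gamma)$, which is precisely the parabolic regularity relation for $\delta_E$.

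I expect the main obstacle to be the rigorous justification of the upper bound in the key formula, namely producing an admissible recovery sequence $(d'_{k},\gamma'_{k})\to(d,\gamma)$ that both keeps $\bar{z}+\tau_{k}(d'_{k},\gamma'_{k})$ inside $E$ and drives the quotient to the optimal value: this is exactly where parabolic epi-differentiability, rather than mere finiteness of the parabolic subderivative, is indispensable, and where the interchange between the infimum over $w$ in the parabolic-regularity identity and the liminf defining the second-order subderivative must be controlled. The identification of $T^{2}_E(\bar{z};(d,\gamma))$ with $\operatorname{epi}\big(d^{2}f(\bar{x})(d,\cdot)\big)$ as an exact equality, and not merely one inclusion, relies on the same property.
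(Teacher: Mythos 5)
Your proposal is correct and follows essentially the same route as the paper's proof: both establish the domain identity via the critical-cone description $\{(d,\gamma)\,|\,df(\bar{x})(d)=\langle\bar{\mu},d\rangle=\gamma\}$, sandwich $d^{2}\delta_{\operatorname{epi}f}((\bar{x},f(\bar{x})),\lambda(\bar{\mu},-1))(d,\gamma)$ between $\lambda\,d^{2}f(\bar{x},\bar{\mu})(d)$ from below (feasible difference quotients) and from above (the second-order tangent set $\mathcal{T}^{2}_{\operatorname{epi}f}=\operatorname{epi}d^{2}f(\bar{x})(d,\cdot)$ together with parabolic regularity of $f$), and then collapse the infimum in Definition~\ref{def:par-reg} to the same value. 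The only difference is presentational: where the paper cites \cite[Example~13.62, Theorems~8.2 and 13.57]{Rockafellar1998}, \cite{Mohammadi2021}, and \cite[Propositions~2.126 and 3.41]{Bonnans2000}, you reprove the relevant facts by hand (explicit recovery sequences $d'_{k}=d+\tfrac{1}{2}\tau_{k}w^{k}$ with boundary choice of $\gamma'_{k}$), which is sound and even avoids invoking twice epi-differentiability for the lower bound.
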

\begin{proof}
It follows from \cite[Theorem 8.9]{Rockafellar1998} that $\lambda(\bar{\mu},-1)\in\partial \delta_{\operatorname{epi}f}(\bar{x},f(\bar{x}))$. Combining \cite[Example~13.62]{Rockafellar1998}, \cite[Theorem~3.3]{Mohammadi2021}, \cite[Theorem~8.2]{Rockafellar1998}, and \cite[Proposition~2.126]{Bonnans2000} tells us that
\begin{align*}
\operatorname{dom}d^{2}\delta_{\operatorname{epi}f}((\bar{x},f(\bar{x})),\lambda(\bar{\mu},-1))&=\left\{(d,\gamma)\,\left|\,d\delta_{\operatorname{epi}f}(\bar{x},f(\bar{x}))(d,\gamma)=\lambda\langle\bar{\mu},d\rangle-\lambda\gamma\right.\right\}\\
&=\left\{(d,\gamma)\,\left|\,df(\bar{x})(d)=\langle\bar{\mu},d\rangle=\gamma\right.\right\}.
\end{align*}
By the definition of $d^{2}\delta_{\operatorname{epi}f}((\bar{x},f(\bar{x})),\lambda(\bar{\mu},-1))$, for any $(d,\gamma)\in\operatorname{dom}d^{2}\delta_{\operatorname{epi}f}((\bar{x},f(\bar{x})),\lambda(\bar{\mu},-1))$, there exist sequences $\tau_{k}\downarrow0$ and $(d^{k},\gamma^{k})\rightarrow(d,\gamma)$ with $(\bar{x}+\tau_{k}d^{k},f(\bar{x})+\tau_{k}\gamma^{k})\in\operatorname{epi}f$ such that
\begin{align*}
&d^{2}\delta_{\operatorname{epi}f}((\bar{x},f(\bar{x})),\lambda(\bar{\mu},-1))(d,\gamma)=\lim_{k\rightarrow\infty}-\frac{\tau_{k}\lambda\langle\bar{\mu},d^{k}\rangle-\tau_{k}\lambda\gamma^{k}}{\frac{1}{2}\tau_{k}^{2}}\\
&\geq\lim_{k\rightarrow\infty}\lambda\frac{f(\bar{x}+\tau_{k}d^{k})-f(\bar{x})-\tau_{k}\langle\bar{\mu},d^{k}\rangle}{\frac{1}{2}\tau_{k}^{2}}=\lambda d^{2}f(\bar{x},\bar{\mu})(d),
\end{align*}
where the last equality is derived from the twice epi-differentiability of $f$ based on \cite[Proposition~3.2]{MordukhovichTangWang-tilt-2025}. Moreover, it follows from \cite[Proposition~3.2]{Mohammadi2021} and \cite[Proposition~3.41]{Bonnans2000} that
\begin{align*}
d^{2}\delta_{\operatorname{epi}f}((\bar{x},f(\bar{x})),\lambda(\bar{\mu},-1))(d,\gamma)&\leq\inf_{(w,\beta)\in\mathcal{R}^{m}\times\mathcal{R}}\left\{d^{2}\delta_{\operatorname{epi}f}(\bar{x},f(\bar{x}))((d,\gamma),(w,\beta))-\lambda\langle\bar{\mu},w\rangle+\lambda\beta\right\}\\&=\inf_{(w,\beta)\in\mathcal{T}^{2}_{\operatorname{epi}f}((\bar{x},f(\bar{x})),(d,\gamma))}\{\lambda\langle\bar{\mu},w\rangle-\lambda\beta\}=\lambda d^{2}f(\bar{x},\bar{\mu})(d).
\end{align*}
Therefore,  we arrive at the representation
\begin{align*}
d^{2}\delta_{\operatorname{epi}f}((\bar{x},f(\bar{x})),\lambda(\bar{\mu},-1))(d,\gamma)=\inf_{(w,\beta)\in\mathcal{R}^{m}\times\mathcal{R}}\left\{d^{2}\delta_{\operatorname{epi}f}(\bar{x},f(\bar{x}))((d,\gamma),(w,\beta))-\lambda\langle\bar{\mu},w\rangle+\lambda\beta\right\},
\end{align*}
verifying the parabolic regularity of
$\delta_{\operatorname{epi}f}$ at $(\bar{x},f(\bar{x}))$ for $\lambda(\bar{\mu},-1)$ and completing the proof.
\end{proof}

Now we pass to sets and define for them tangent and normal approximations associated with subderivative and subdifferentials of extended-real-valued functions. Given a set $C\subset\mathcal{R}^{m}$ with $\bar{x}\in C$, the (Bouligand-Severi) {\em tangent/contingent cone} to $C$ at $\bar{x}$ is 
\begin{align*}
\mathcal{T}_{C}(\bar{x}):=\big\{d\in\mathcal{R}^{m}\,\big|\,\exists\,\tau_{k}\downarrow0,\ d^{k}\rightarrow d\ \mbox{as}\ k\rightarrow\infty\ \mbox{with}\ \bar{x}+\tau_{k}d^{k}\in C\big\}.
\end{align*}
The (Fr\'{e}chet) {\em regular normal cone} $\widehat{\mathcal{N}}_{C}(\bar{x})$ and the (Mordukhovich) {\em limiting normal cone} $\mathcal{N}_{C}(\bar{x})$ to $C$ at $\bar{x}$ are defined, respectively, by 
\begin{equation}\label{nc}
\begin{array}{ll}
\widehat{\mathcal{N}}_{C}(\bar{x}):=\big\{d\,\big|\,\langle d,x-\bar{x}\rangle\leq o(\|x-\bar{x}\|)\ \mbox{for}\ x\in C\big\},\\
\mathcal{N}_{C}(\bar{x}):=\big\{
d\,\big|\,\exists\,x^{k}\rightarrow\bar{x}\ \mbox{with}\ x^{k}\in C\ \mbox{and}\ d^{k}\rightarrow d\ \mbox{with}\ d^{k}\in\widehat{\mathcal{N}}_{C}(x^{k})\big\}.
\end{array}
\end{equation}
Consider next a set-valued mapping $S:\mathcal{R}^{m}\rightrightarrows\mathcal{R}^{n}$ with the domain and graph defined by 
\begin{align*}
\operatorname{dom}S:=\left\{x\in\mathcal{R}^{m}\,\left|\,S(x)\neq\emptyset\right.\right\}\;\mbox{ and }\;\operatorname{gph}S:=\left\{(x,y)\in\mathcal{R}^{m}\times\mathcal{R}^{n}\,\left|\,y\in S(x)\right.\right\},
\end{align*}
respectively. The {\em graphical derivative} of $S$ at $\bar{x}$ for $\bar{u}\in S(\bar{x})$ is given by
\begin{align}\label{gr-der}
DS(\bar{x},\bar{u})(w):=\left\{z\in\mathcal{R}^{n}\,\left|\,(w,z)\in\mathcal{T}_{\operatorname{gph}S}(\bar{x},\bar{u})\right.\right\},\quad w\in\mathcal{R}^{m}.
\end{align}
The {\em strict graphical derivative} of $S$ at $\bar{x}$ for $\bar{u}\in S(\bar{x})$ is
\begin{align}\label{sgr-der}
D_{*}S(\bar{x},\bar{u})(w):=\Big\{z\in\mathcal{R}^{n}\,&\Big|\,\exists\,\tau_{k}\downarrow0,\ (x^{k},u^{k})\rightarrow(\bar{x},\bar{u})\ \mbox{with}\ u^{k}\in S(x^{k}),\\
&w^{k}\rightarrow w\ \mbox{with}\ z^{k}\in\frac{S(x^{k}+\tau_{k}w^{k})-u^{k}}{\tau_{k}},\ z^{k}\rightarrow z\Big\}.\nonumber
\end{align}
The {\em regular coderivative} and {\em limiting coderivative} of $S$ at $\bar{x}$ for $\bar{u}\in S(\bar{x})$ are defined via the corresponding normal cone in \eqref{nc} by
\begin{equation}\label{cod}
\begin{array}{ll}
\widehat{D}^{*}S(\bar{x},\bar{u})(y):=\big\{v\in\mathcal{R}^{m}\,\big|\,(v,-y)\in\widehat{\mathcal{N}}_{\operatorname{gph}S}(\bar{x},\bar{u})\big\},\quad y\in\mathcal{R}^{n},\\
D^{*}S(\bar{x},\bar{u})(y):=\left\{v\in\mathcal{R}^{m}\,\left|\,(v,-y)\in\mathcal{N}_{\operatorname{gph}S}(\bar{x},\bar{u})\right.\right\},\quad y\in\mathcal{R}^{n}.
\end{array}
\end{equation}
If $S$ is single-valued, we omit the second argument in the notation above. 

For an open set $\mathcal{U}\subset\mathcal{R}^{m}$, denote  by $\mathcal{U}_{F}$ the subset of $\mathcal{U}$ where the given single-valued mapping $F:\mathcal{U}\rightarrow\mathcal{R}^{n}$ is Fr\'{e}chet differentiable. 
If $F$ is locally Lipschitzian around $\bar x\in\mathcal{U}$, the {\em $B$-subdifferential} of $F$ at $\bar{x}$ is defined by
\begin{align}\label{B-sub}
\partial_{B}F(\bar{x}):=\left\{A\in\mathcal{R}^{n\times m}\,\left|\,\exists\,x^{k}\rightarrow\bar{x}\ \mbox{with}\ x^{k}\in\mathcal{U}_{F},\ \nabla F(x^{k})\rightarrow A\right.\right\},
\end{align}
where $\nabla F$ stands for the classical Jacobian. The (Clarke) {\em generalized Jacobian} $\mathcal{J}F(\bar{x})$ of $F$ at $\bar{x}$ is the convex hull of $\partial_{B}F(\bar{x})$. The following are simple observations for the inverses
\begin{align}\label{inverse-graphical-derivative}
w\in DS^{-1}(\bar{u},\bar{x})(z)&\Longleftrightarrow z\in DS(\bar{x},\bar{u})(w),\\
y\in D^{*}S^{-1}(\bar{u},\bar{x})(v)&\Longleftrightarrow -v\in D^{*}S(\bar{x},\bar{u})(-y),\label{inverse-coderivative}
\end{align}
and summation rules for the graphical derivatives \eqref{gr-der}, \eqref{sgr-der} and the limiting coderivative in \eqref{cod} of $S:\mathcal{R}^{m}\rightrightarrows\mathcal{R}^{n}$ with the ${\cal C}^1$-smooth addition $h\colon\mathcal{R}^m\to\mathcal{R}^n$:
\begin{align}\label{sum-rule-graphical-derivative}
D(h+S)(\bar{x},\bar{u}+h(\bar{x}))(w)&=\nabla h(\bar{x})w+DS(\bar{x},\bar{u})(w),\\
D^{*}(h+S)(\bar{x},\bar{u}+h(\bar{x})(y)&=\nabla h(\bar{x})^{T}y+D^{*}S(\bar{x},\bar{u})(y),\label{sum-rule-coderivative}\\
 D_{*}(h+S)(\bar{x},\bar{u}+h(\bar{x}))(w)&=\nabla h(\bar{x})w+D_{*}S(\bar{x},\bar{u})(w),\label{sum-rule-strict-graphical-derivative}
\end{align}
which are useful in the proofs of our major results developed below.\vspace*{0.03in}

Recall now several {\em well-posedness properties} of multifunctions studied in the paper.\vspace*{-0.05in}

\begin{definition}\label{well-posed}
Let $S:\mathcal{R}^{m}\rightrightarrows\mathcal{R}^{n}$ with $\bar{x}\in\operatorname{dom}S$ and $\bar{u}\in S(\bar{x})$. Then we say that:

{\bf(i)} $S$ is {\sc metrically regular} at $\bar{x}$ for $\bar{u}$ if there exists $\kappa\ge 0$ along with $\varepsilon,\delta>0$ such that
\begin{align*}
\operatorname{dist}(x,S^{-1}(u))\leq\kappa\operatorname{dist}(u,S(x))\;\mbox{ for all }\;x\in\mathbb{B}(\bar{x},\varepsilon),\;u\in\mathbb{B}(\bar{u},\delta).
\end{align*}

{\bf(ii)} $S$ is {\sc strongly metrically regular} at $\bar{x}$ for $\bar{u}$ if it is metrically regular at $\bar{x}$ for $\bar{u}$ and there exist $\varepsilon,\delta>0$ such that $S^{-1}(u)\cap\mathbb{B}(\bar{x},\varepsilon)$ is a singleton for all $u\in\mathbb{B}(\bar{u},\delta)$.

{\bf(iii)} $S$ has the {\sc isolated calmness} property at $\bar{x}$ for $\bar{u}$ if there exist $\kappa\geq0$ and $\varepsilon,\delta>0$ with
\begin{align*}
S(x)\cap\mathbb{B}(\bar{u},\delta)\subset \{\bar{u}\}+\kappa\|x-\bar{x}\|\mathbb{B}_{\mathcal{R}^{n}}\;\mbox{ for all }\;x\in\mathbb{B}(\bar{x},\varepsilon).
\end{align*}

{\bf(iv)} $S$ is {\sc Lipschitz-like} or has the {\sc Aubin property}  at $\bar{x}$ for $\bar{u}$ with $(\bar{x},\bar{u})\in\operatorname{gph}S$ if there exist $\kappa\geq0$ and $\varepsilon,\delta>0$ ensuring that
\begin{align*}
S(x)\cap\mathbb{B}(\bar{u},\delta)\subset S(x')+\kappa\|x-x'\|\mathbb{B}_{\mathcal{R}^{n}}\;\mbox{ for all }\;x,x'\in\mathbb{B}(\bar{x},\varepsilon).
\end{align*}
\end{definition}
Relationships between the well-posedness properties in Definition~\ref{well-posed} have been clearly understood in variational analysis; see, e.g., the books \cite{DontchevandRockafellar2014,Mordukhovich2018,Rockafellar1998} and the references therein.\vspace*{0.05in}

The following three lemmas present generalized differential characterizations of the Lipschitz-like/Aubin, isolated calmness, and strong metric regularity properties that are used in this paper.%\vspace*{-0.05in}

\begin{lemma}\label{lemma-aubin-property-Mordukhovich-criteria} Let the graph of $S:\mathcal{R}^{m}\rightrightarrows\mathcal{R}^{n}$ be closed around $(\bar x,\bar u)\in{\rm gph}\,S$. Then $S$ is Lipschitz-like at $\bar{x}$ for $\bar{u}$ if and only if we have the condition $D^{*}S(\bar{x},\bar{u})(0)=\{0\}$ via the limiting coderivative from \eqref{cod}.
\end{lemma}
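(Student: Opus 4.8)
The result is the celebrated Mordukhovich coderivative criterion for the Lipschitz-like property, and the most economical route exploits the fact that the Aubin property of $S$ is precisely the metric regularity of the inverse mapping $S^{-1}$. Thus my plan is to pass to $S^{-1}$, invoke the coderivative characterization of metric regularity, and then translate the resulting condition back into a condition on $D^{*}S(\bar x,\bar u)$ by means of the inverse rule \eqref{inverse-coderivative}. First I would record the elementary equivalence between the Lipschitz-like property of $S$ at $\bar x$ for $\bar u$ (Definition~\ref{well-posed}(iv)) and the metric regularity of $S^{-1}$ at $\bar u$ for $\bar x$ (Definition~\ref{well-posed}(i)); this is immediate from the definitions, since $\operatorname{dist}(x,S^{-1}(u))$ and $\operatorname{dist}(u,S(x))$ play symmetric roles in the two estimates, and the graph of $S^{-1}$ is closed around $(\bar u,\bar x)$ exactly when the graph of $S$ is closed around $(\bar x,\bar u)$.

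Next I would apply the coderivative criterion for metric regularity (see, e.g., \cite[Theorem~4.18]{Mordukhovich2006} or \cite{DontchevandRockafellar2014}), which asserts that a mapping $T$ with locally closed graph is metrically regular at the reference pair if and only if $\operatorname{ker}D^{*}T=\{0\}$, meaning $0\in D^{*}T(y)\Rightarrow y=0$. Applying this with $T=S^{-1}$ and using \eqref{inverse-coderivative} with the zero argument, I would compute $0\in D^{*}S^{-1}(\bar u,\bar x)(y)\Longleftrightarrow -y\in D^{*}S(\bar x,\bar u)(0)$, whence $\operatorname{ker}D^{*}S^{-1}(\bar u,\bar x)=-D^{*}S(\bar x,\bar u)(0)$. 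Consequently the nonsingularity $\operatorname{ker}D^{*}S^{-1}(\bar u,\bar x)=\{0\}$ is equivalent to $D^{*}S(\bar x,\bar u)(0)=\{0\}$, which chains together all the equivalences and finishes the argument.

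Should a self-contained proof be preferred, I would argue the two implications directly. The necessity direction ($\Rightarrow$) is elementary: taking $v\in D^{*}S(\bar x,\bar u)(0)$, I would approximate $(v,0)$ by regular normals $(v^{k},w^{k})\in\widehat{\mathcal{N}}_{\operatorname{gph}S}(x^{k},u^{k})$ along graph points $(x^{k},u^{k})\to(\bar x,\bar u)$, and then combine the defining inequality of the regular normal cone in \eqref{nc} with the Lipschitz-like inclusion, which supplies, for $x$ near $x^{k}$, a point of $S(x)$ within $\kappa\|x-x^{k}\|$ of $u^{k}$, to obtain an estimate of the form $\|v^{k}\|\le\kappa\|w^{k}\|+o(1)$; passing to the limit with $w^{k}\to 0$ forces $v=0$. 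The sufficiency direction ($\Leftarrow$) is the genuine analytic core and the step I expect to be the main obstacle: arguing by contraposition, failure of the Aubin property produces graph points at which the distance function $\operatorname{dist}(u,S(x))$ degrades, and one must run a penalization/Ekeland variational argument together with the extremal principle (equivalently, the fuzzy subdifferential sum rule) to manufacture, at nearby graph points, regular normals whose limit is a nonzero element $(v,0)\in\mathcal{N}_{\operatorname{gph}S}(\bar x,\bar u)$, that is, a nonzero $v\in D^{*}S(\bar x,\bar u)(0)$. Here the assumed local closedness of $\operatorname{gph}S$ supplies the compactness needed to extract the limiting normal in the finite-dimensional setting.
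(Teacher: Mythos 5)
Your proposal is correct, but note that the paper does not prove this lemma at all: it states it as a known result (the Mordukhovich criterion) and cites \cite[Theorem~5.7]{Mordukhovich1993} and \cite[Theorem~9.40]{Rockafellar1998}. Your primary route---identifying the Lipschitz-like property of $S$ at $\bar x$ for $\bar u$ with metric regularity of $S^{-1}$ at $\bar u$ for $\bar x$, invoking the coderivative kernel criterion for metric regularity, and translating via \eqref{inverse-coderivative} to obtain $\operatorname{ker}D^{*}S^{-1}(\bar u,\bar x)=-D^{*}S(\bar x,\bar u)(0)$, hence the equivalence with $D^{*}S(\bar x,\bar u)(0)=\{0\}$---is a sound substitute for that citation, and the translation is computed correctly (the sign is harmless since a set equals $\{0\}$ if and only if its negative does, and $0\in D^{*}S(\bar x,\bar u)(0)$ always holds). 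The one caveat is the choice of reference: you should cite a source where the metric-regularity criterion is established independently, e.g.\ via the extremal principle as in \cite[Theorem~4.18]{Mordukhovich2006}; citing instead a text such as \cite{Rockafellar1998}, where the metric-regularity characterization (Theorem~9.43 there) is itself deduced from the very criterion you are proving (Theorem~9.40), would render the argument circular. Your self-contained sketch also matches the standard textbook proof: the necessity half is complete in substance---for each fixed $k$, pairing the regular-normal inequality from \eqref{nc} with the point of $S(x)$ supplied by the Aubin inclusion and letting $x\to x^{k}$ along a maximizing direction gives $\|v^{k}\|\le\kappa\|w^{k}\|$ exactly, so your $o(1)$ term is even unnecessary---while the sufficiency half is, as you correctly flag, the analytic core (Ekeland's variational principle or the extremal principle applied to a penalized distance function, with local closedness of $\operatorname{gph}S$ providing the compactness to extract a nonzero limiting normal of the form $(v,0)$) and remains an outline. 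That is acceptable here, since your reduction argument already closes the proof modulo an established criterion, which is exactly the level of rigor the paper itself adopts for this lemma.
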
%\vspace*{-0.05in}

\begin{lemma}\label{lemma-isolated-calmness-graphical-derivative} Let the graph of $S:\mathcal{R}^{m}\rightrightarrows\mathcal{R}^{n}$ be closed around $(\bar x,\bar u)\in{\rm gph}\,S$. Then $S$ enjoys the isolated calmness property at $\bar{x}$ for $\bar{u}$ if and only if we have the condition $DS(\bar{x},\bar{u})(0)=\{0\}$ in terms of the graphical derivative from \eqref{gr-der}.
\end{lemma}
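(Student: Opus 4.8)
The plan is to work directly from the definition of the graphical derivative, which by \eqref{gr-der} together with the definition of the tangent cone unwinds as follows: a vector $z$ belongs to $DS(\bar{x},\bar{u})(0)$ if and only if there exist sequences $\tau_{k}\downarrow0$ and $(w^{k},z^{k})\to(0,z)$ with $\bar{u}+\tau_{k}z^{k}\in S(\bar{x}+\tau_{k}w^{k})$ for every $k$. I will use this equivalent description for both implications; the closedness of $\operatorname{gph}S$ around $(\bar{x},\bar{u})$ ensures that the points $(\bar{x}+\tau_{k}w^{k},\bar{u}+\tau_{k}z^{k})$ under consideration genuinely lie in the graph.

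To prove necessity, I would assume the isolated calmness property with constants $\kappa\geq0$ and $\varepsilon,\delta>0$, pick any $z\in DS(\bar{x},\bar{u})(0)$, and take associated sequences $\tau_{k}\downarrow0$, $w^{k}\to0$, $z^{k}\to z$ as above. Since $\bar{x}+\tau_{k}w^{k}\to\bar{x}$ and $\tau_{k}z^{k}\to0$, for all large $k$ the points $\bar{x}+\tau_{k}w^{k}$ and $\bar{u}+\tau_{k}z^{k}$ lie in $\mathbb{B}(\bar{x},\varepsilon)$ and $\mathbb{B}(\bar{u},\delta)$, respectively. The calmness inclusion then gives $\|\tau_{k}z^{k}\|\leq\kappa\,\tau_{k}\|w^{k}\|$, that is $\|z^{k}\|\leq\kappa\|w^{k}\|$, and letting $k\to\infty$ forces $\|z\|\leq\kappa\cdot0=0$. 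Hence $z=0$ and $DS(\bar{x},\bar{u})(0)=\{0\}$.

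For sufficiency I would argue by contraposition: suppose isolated calmness fails. Negating Definition~\ref{well-posed}(iii) with $\kappa=k$ and $\varepsilon=\delta=k^{-1}$ produces, for each $k$, points $x_{k}\in\mathbb{B}(\bar{x},k^{-1})$ and $u_{k}\in S(x_{k})\cap\mathbb{B}(\bar{u},k^{-1})$ with $\|u_{k}-\bar{u}\|>k\|x_{k}-\bar{x}\|$; in particular $u_{k}\neq\bar{u}$, so $\tau_{k}:=\|u_{k}-\bar{u}\|>0$ and $\tau_{k}\to0$. The key device is to rescale by $\tau_{k}$ rather than by $\|x_{k}-\bar{x}\|$: setting $z^{k}:=(u_{k}-\bar{u})/\tau_{k}$ and $w^{k}:=(x_{k}-\bar{x})/\tau_{k}$ gives $\|z^{k}\|=1$ while $\|w^{k}\|=\|x_{k}-\bar{x}\|/\|u_{k}-\bar{u}\|<k^{-1}\to0$. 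Passing to a subsequence along which the unit vectors $z^{k}$ converge to some $z$ with $\|z\|=1$, I obtain $\bar{x}+\tau_{k}w^{k}=x_{k}$ and $\bar{u}+\tau_{k}z^{k}=u_{k}\in S(x_{k})$, so that $(0,z)\in\mathcal{T}_{\operatorname{gph}S}(\bar{x},\bar{u})$ and thus $z\in DS(\bar{x},\bar{u})(0)$ with $z\neq0$, contradicting the hypothesis $DS(\bar{x},\bar{u})(0)=\{0\}$. I expect the main obstacle to lie precisely in this normalization step: the natural-looking choice of scaling by $\|x_{k}-\bar{x}\|$ fails, and one must instead divide by $\|u_{k}-\bar{u}\|$ so that the $w^{k}$ vanish while the $z^{k}$ remain on the unit sphere, keeping the limiting tangent vector nonzero and using the quantitative gap $\|u_{k}-\bar{u}\|>k\|x_{k}-\bar{x}\|$ exactly where it is needed.
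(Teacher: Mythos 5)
Your proof is correct, but it does not follow the paper's route for the simple reason that the paper has none: Lemma~2.2 is stated as the known Levy--Rockafellar criterion and is justified solely by a citation to \cite[Theorem~4E.1]{DontchevandRockafellar2014}. What you supply instead is a self-contained first-principles argument, and both halves check out. The necessity direction correctly feeds the tangent-cone sequences $\bar{u}+\tau_{k}z^{k}\in S(\bar{x}+\tau_{k}w^{k})$ into the calmness estimate to get $\|z^{k}\|\leq\kappa\|w^{k}\|\to0$, and the sufficiency direction uses exactly the right normalization: rescaling by $\tau_{k}:=\|u_{k}-\bar{u}\|$ (not by $\|x_{k}-\bar{x}\|$, which may vanish or decay too fast) keeps $z^{k}$ on the unit sphere while $\|w^{k}\|<k^{-1}\to0$, so the limit $z$ with $\|z\|=1$ lies in $DS(\bar{x},\bar{u})(0)$ and contradicts the hypothesis --- this is the standard device in the literature, and your identification of it as the crux is accurate. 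Two small remarks. First, your appeal to closedness of $\operatorname{gph}S$ is superfluous: in both directions the points $(\bar{x}+\tau_{k}w^{k},\bar{u}+\tau_{k}z^{k})$ lie in the graph by the very definition of the tangent cone (respectively, by your construction), and indeed the criterion is valid without any closedness assumption --- the paper includes it only for uniformity with Lemmas~2.1 and~2.3, whose coderivative/strict-derivative criteria do need local closedness. Second, to conclude the stated equality $DS(\bar{x},\bar{u})(0)=\{0\}$ rather than the inclusion $DS(\bar{x},\bar{u})(0)\subset\{0\}$, one should note that $0\in DS(\bar{x},\bar{u})(0)$ always holds since $(\bar{x},\bar{u})\in\operatorname{gph}S$ (take $w^{k}=z^{k}=0$ in your sequential description); this is trivial but worth a line. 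The trade-off between the two routes is the usual one: the citation keeps the paper lean and anchored to the literature, while your argument makes the lemma verifiable on the spot and exposes that the graphical-derivative characterization of isolated calmness is an entirely elementary fact about tangent cones.
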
%\vspace*{-0.05in}

\begin{lemma}\label{lemma-strong-regularity-strict-graphical-derivative} Let the graph of $S:\mathcal{R}^{m}\rightrightarrows\mathcal{R}^{n}$ be closed around $(\bar x,\bar u)\in{\rm gph}\,S$. Then $S$ is strongly metrically regular at $\bar{x}$ for $\bar{u}$ if and only if  $D^{*}S(\bar{x},\bar{u})(0)=\{0\}$ and $D_{*}S(\bar{x},\bar{u})(0)=\{0\}$ in terms of the strict graphical derivative from \eqref{sgr-der}.
\end{lemma}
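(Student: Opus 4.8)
The plan is to decompose strong metric regularity into its two structural ingredients and to match each ingredient with one of the two generalized-derivative conditions, in the same spirit as Lemmas~\ref{lemma-aubin-property-Mordukhovich-criteria} and \ref{lemma-isolated-calmness-graphical-derivative}. Recall that $S$ is strongly metrically regular at $\bar{x}$ for $\bar{u}$ precisely when $S^{-1}$ admits a single-valued and Lipschitz continuous localization $s$ around $(\bar{u},\bar{x})$; equivalently, $S$ is metrically regular at $\bar{x}$ for $\bar{u}$ and $S^{-1}$ is locally single-valued there. Since $\operatorname{gph}S$ is closed around $(\bar{x},\bar{u})$, so is $\operatorname{gph}S^{-1}$ around $(\bar{u},\bar{x})$, which lets me invoke the coderivative and strict-graphical-derivative calculus freely on $S^{-1}$. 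I would then prove the two implications by treating the metric-regularity content and the single-valuedness content separately.

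The metric-regularity half is the more routine one. Metric regularity of $S$ at $\bar{x}$ for $\bar{u}$ is the Lipschitz-like property of the inverse $S^{-1}$ at $\bar{u}$ for $\bar{x}$, so the Mordukhovich criterion of Lemma~\ref{lemma-aubin-property-Mordukhovich-criteria} applied to $S^{-1}$, read through the coderivative inversion rule \eqref{inverse-coderivative}, converts it into the coderivative condition on $S$ at $(\bar{x},\bar{u})$. Once the closedness of $\operatorname{gph}S^{-1}$ is recorded, this step is a direct citation of the established criterion together with the graph inversion between $S$ and $S^{-1}$.

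The heart of the argument is that, on top of metric regularity, the strict-graphical-derivative condition is exactly what forces the localization of $S^{-1}$ to be single-valued, and hence Lipschitz. For this I would use the inversion rule for the strict graphical derivative that is the evident analogue of \eqref{inverse-graphical-derivative}, since $D_{*}S$ depends only on $\operatorname{gph}S$ near $(\bar{x},\bar{u})$. In the necessity direction, a single-valued Lipschitz localization $s$ of $S^{-1}$ obeys $D_{*}s(\bar{u})(0)=\{0\}$, because the Lipschitz bound forces every normalized secant slope with vanishing increment to tend to $0$; transporting this through the inversion rule yields the strict-graphical-derivative condition on $S$. In the sufficiency direction I would argue by contraposition: if the localization of $S^{-1}$ were not single-valued near $(\bar{u},\bar{x})$, one could pick graph points $(x^{k},u^{k})$ and $(\tilde{x}^{k},u^{k})$ of $S$ collapsing to $(\bar{x},\bar{u})$ with $x^{k}\neq\tilde{x}^{k}$; setting $\tau_{k}:=\|\tilde{x}^{k}-x^{k}\|$ and $w^{k}:=(\tilde{x}^{k}-x^{k})/\tau_{k}$, definition \eqref{sgr-der} produces a unit vector $w\neq0$ with $0\in D_{*}S(\bar{x},\bar{u})(w)$, which is incompatible with the strict-graphical-derivative condition.

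I expect the sufficiency half of this last step to be the principal obstacle. Extracting a genuine violating direction requires the base points $(x^{k},u^{k})$ to move along $\operatorname{gph}S$, which is the precise feature distinguishing \eqref{sgr-der} from the ordinary graphical derivative, and it must be coupled with the metric-regularity estimate to keep the auxiliary directions convergent and to guarantee that the limiting slope is well defined; the closedness of $\operatorname{gph}S$ is what permits passing to the limit along these sequences. The bookkeeping of the relative rates of $\tau_{k}$, $w^{k}$, and the secant slopes is the delicate part, whereas reassembling metric regularity and single-valuedness into strong metric regularity is then routine.
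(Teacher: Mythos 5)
Your decomposition of strong metric regularity into metric regularity plus local single-valuedness of $S^{-1}$ is the right skeleton, and your secant constructions are sound, but there is a genuine orientation gap at the exact points where you ``transport through the inversion rules.'' The Mordukhovich criterion applied to $S^{-1}$ gives $D^{*}S^{-1}(\bar{u},\bar{x})(0)=\{0\}$, and \eqref{inverse-coderivative} converts this into the injectivity-type condition $0\in D^{*}S(\bar{x},\bar{u})(y)\Rightarrow y=0$, \emph{not} into $D^{*}S(\bar{x},\bar{u})(0)=\{0\}$: the latter is, by Lemma~\ref{lemma-aubin-property-Mordukhovich-criteria}, the Aubin property of $S$ itself, which neither implies nor is implied by metric regularity of $S$. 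The same flip corrupts the strict-derivative half. Since $\operatorname{gph}D_{*}S(\bar{x},\bar{u})$ is the paratingent cone to $\operatorname{gph}S$ at $(\bar{x},\bar{u})$, the analogue of \eqref{inverse-graphical-derivative} reads $z\in D_{*}S^{-1}(\bar{u},\bar{x})(w)\Longleftrightarrow w\in D_{*}S(\bar{x},\bar{u})(z)$; hence your contraposition, which produces a unit vector $w\neq 0$ with $0\in D_{*}S(\bar{x},\bar{u})(w)$, contradicts $D_{*}S^{-1}(\bar{u},\bar{x})(0)=\{0\}$ but is perfectly consistent with the hypothesis $D_{*}S(\bar{x},\bar{u})(0)=\{0\}$ --- that step is a non sequitur as written. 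The mismatch is not cosmetic: the constant map $S\equiv\{0\}$ on $\mathcal{R}$ satisfies $D^{*}S(0,0)(0)=\{0\}$ and $D_{*}S(0,0)(0)=\{0\}$ yet is not metrically regular at $0$ for $0$, while $S(x)=x^{1/3}$ is strongly metrically regular at $0$ for $0$ (its inverse $u\mapsto u^{3}$ is single-valued and Lipschitz) yet $D^{*}S(0,0)(0)=\mathcal{R}$ and $(0,1)$ lies in the paratingent (indeed tangent) cone to $\operatorname{gph}S$, so $D_{*}S(0,0)(0)\neq\{0\}$. Thus neither implication can be closed along your route with the conditions read literally on $S$.

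What your sequences actually establish, once the bookkeeping is done on the correct side, is the inverse-oriented characterization: $S$ is strongly metrically regular at $\bar{x}$ for $\bar{u}$ if and only if $\bigl[0\in D^{*}S(\bar{x},\bar{u})(y)\Rightarrow y=0\bigr]$ and $\bigl[0\in D_{*}S(\bar{x},\bar{u})(w)\Rightarrow w=0\bigr]$, equivalently $D^{*}S^{-1}(\bar{u},\bar{x})(0)=\{0\}$ and $D_{*}S^{-1}(\bar{u},\bar{x})(0)=\{0\}$. (For the single-valuedness step the two colliding graph points should be viewed as $(u^{k},x^{k})$ and $(u^{k},\tilde{x}^{k})$ in $\operatorname{gph}S^{-1}$ with zero increment in the $u$-variable, which is exactly what makes the violating direction land in $D_{*}S^{-1}(\bar{u},\bar{x})(0)$; nonemptiness of the localization then comes from metric regularity, as you implicitly use.) For calibration: the paper offers no proof of this lemma --- it is cited from Gfrerer's Theorem~2.7 --- and everywhere the lemma is actually invoked (the condition \eqref{strict-graphical-derivative-equation} in the proof of Lemma~\ref{lemma-strongly-regular-equiv}, and step (viii)$\Longrightarrow$(i) of Theorem~\ref{theorem-aubin-property-SOQC-SSOSC}) it is the kernel-type conditions that are used, so the version your constructions support is the operative one. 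If you restate the two derivative conditions on $S^{-1}$ (or as the kernel conditions on $S$) and rerun your argument verbatim, it becomes a correct, self-contained proof.
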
\vspace*{-0.05in}

Lemma~\ref{lemma-aubin-property-Mordukhovich-criteria} is known as the {\em Mordukhovich criterion}; see \cite[Theorem~5.7]{Mordukhovich1993} and \cite[Theorem~9.40]{Rockafellar1998}. Lemma~\ref{lemma-isolated-calmness-graphical-derivative} is called the {\em Levy-Rockafellar criterion}; see \cite[Theorem~4E.1]{DontchevandRockafellar2014} and the references therein. Lemma~\ref{lemma-strong-regularity-strict-graphical-derivative} is taken from \cite[Theorem~2.7]{Gfrerer2022}. This criterion has been recently enhanced in \cite[Theorem~4.2]{BenkoRockafellar2024} in the context where the multifunction $S$ is the solution mapping $S_{\operatorname{KKT}}$ associated with the KKT system in \eqref{eq:kkt-perturbed-cop}.

To this end, it is worth mentioning that the limiting coderivative in Lemma~\ref{lemma-aubin-property-Mordukhovich-criteria}, along with the limiting subdifferential \eqref{lsub} and normal cone in \eqref{nc}, enjoys {\em full calculus}. This is not the case for the graphical derivative \eqref{gr-der} in Lemma~\ref{lemma-isolated-calmness-graphical-derivative} and its strict counterpart \eqref{sgr-der} in Lemma~\ref{lemma-strong-regularity-strict-graphical-derivative},
as well as the regular subdifferential \eqref{rsub} and the corresponding normal cone in \eqref{nc}.\vspace*{-0.15in}

\section{Qualification and Second-Order Variational Conditions}\label{sec:srcq-nondegeneracy-sosc-ssosc}\vspace*{-0.05in}

In this section, we define and investigate basic constraint qualification, nondegeneracy, and second-order variational conditions important for deriving our main well-posedness characterization in what follows. Recall first that the {\em Robinson constraint qualification} (RCQ) holds at a feasible point $\bar x$ of the composite optimization problem \eqref{comp-prob} if
\begin{align}\label{RCQ}
\nabla F(\bar x)\mathcal{R}^{n}+\mathcal{T}_{\operatorname{dom}g}(F(\bar x))=\mathcal{R}^{m}.
\end{align}
Fixed $x\in\mathcal{R}^{n}$, define the {\em Lagrange multiplier mapping} $\Lambda$ associated with $x$ by
\begin{align}\label{lag-map}
\Lambda(x,a,b):=\big\{u\,\big|\,(a,b)\in\Phi(x,u)\big\}\ \mbox{ for }\ (a,b)\in\mathcal{R}^{n}\times\mathcal{R}^{m}.
\end{align} 
It follows from \cite[Theorem~3.9 and Proposition~3.17]{Bonnans2000} that the Lagrange multiplier set $\Lambda(x,0,0)$ is nonempty, convex, and compact if and only if RCQ holds at $x$.
If $\bar{x}$ is a local minimizer of problem \eqref{comp-prob} and RCQ is satisfied at $\bar{x}$, then there exists a Lagrange  multiplier $\bar{u}\in\mathcal{R}^{m}$ such that $(\bar{x},\bar{u})\in S_{\operatorname{KKT}}(0,0)$. It is said that the {\em strict Robinson constraint qualification} (SRCQ) is satisfied at $\bar{x}$ with respect to $\bar{u}$ if we have
\begin{align}\label{SRCQ}
\nabla F(\bar{x})\mathcal{R}^{n}+\big\{d\,\big|\,dg(F(\bar{x}))(d)=\langle\bar{u},d\rangle\big\}=\mathcal{R}^{m}.
\end{align} 
The {\em constraint nondegeneracy} condition for problem \eqref{comp-prob} is given by
\begin{align}\label{nondegeneracy-cond}
\nabla F(\bar{x})\mathcal{R}^{n}+\operatorname{lin}\big\{d\,\big|\,dg(F(\bar{x}))(d)=\langle\bar{u},d\rangle\big\}=\mathcal{R}^{m}.
\end{align}
Define further the {\em critical cone} associated with problem \eqref{comp-prob} at $\bar{x}$ by
\begin{align}\label{crit-cone}
\mathcal{C}(\bar{x})&:=\big\{d\,\big|\,\nabla h(\bar{x})^{T}d+dg(F(\bar{x}))(\nabla F(\bar{x})d)=0\big\}\\
&=\big\{d\,\big|\,dg(F(\bar{x}))(\nabla F(\bar{x})d)=\langle\nabla F(\bar{x})d,u\rangle\big\},\nonumber
\end{align}
where $u\in\Lambda(\bar{x},0,0)$.
The {\em second-order sufficient condition} (SOSC)
is said to hold for problem \eqref{comp-prob} at $\bar{x}$ if  for all $d\in\mathcal{C}(\bar{x})\setminus\{0\}$, we have
\begin{align}\label{SOSC}
\sup_{u\in\Lambda(\bar{x},0,0)}\big\{\langle\nabla_{xx}^{2}L(\bar{x},u)d,d\rangle+\inf_{w\in\mathcal{R}^{m}}\{d^{2}g(F(\bar{x}))(\nabla F(\bar{x})d,w)-\langle u,w\}\big\}>0.
\end{align}
Given now an l.s.c.\ function $f:\mathcal{R}^{m}\rightarrow\overline{\mathcal{R}}$ with $x\in\operatorname{dom}f$ and given its limiting subgradient $\mu\in\partial f(x)$, define the {\em second-order variational function} 
\begin{align}\label{Gamma}
\Gamma_{f}(x,\mu)(v):=\left\{\begin{array}{cl}
\min\limits_{{d\in\mathcal{R}^{m},\,v=Ud}\atop{U\in\mathcal{J}\operatorname{Prox}_{f}(x+\mu)}}\langle v,d-v\rangle, & v\in\bigcup\limits_{U\in\mathcal{J}\operatorname{Prox}_{f}(x+\mu)}\operatorname{rge}U,\\
\infty, & \mbox{otherwise},
\end{array}\right.
\end{align}
by using the generalized Jacobian of the proximal mapping \eqref{prox}. Following
\cite{Tangwang2024}, consider the generalized version of the strong second-order sufficient condition for the composite optimization problem \eqref{comp-prob} that extends the classical one for NLPs introduced in \cite{Robinson1980}.\vspace*{-0.03in}

\begin{definition}\label{def:ssosc}
Let $\bar{x}$ be an optimal solution to problem \eqref{comp-prob}. We say that the {\sc strong second-order sufficient condition} $(SSOSC)$ holds for \eqref{comp-prob} at $\bar{x}$ if 
\begin{align}\label{SSOSC}
\sup_{u\in\Lambda(\bar{x},0,0)}\left\{\left\langle\nabla_{xx}^{2}L(\bar{x},u)d,d\right\rangle+\Gamma_{g}(F(\bar{x}),u)(\nabla F(\bar{x})d)\right\}>0\;\mbox{ whenever }\;d\ne 0.
\end{align} 
\end{definition}\vspace*{-0.05in}

Next we formulate the two major assumptions systematically used in the paper.\vspace*{0.03in}

{\bf(A1)} Let $f$ be an l.s.c.\ function with $\bar{x}\in\operatorname{dom}f$. Given any $x$ in some neighborhood of $\bar{x}$ and any subgradient $\mu\in\partial f(x)$, the function $f$ is parabolically regular at $x$ for $\mu$ and parabolically epi-differentiable at $x$ for every $z\in\left\{d\,\left|\,df(x)(d)=\langle\mu,d\rangle\right.\right\}$.

{\bf(A2)} Let $f$ be an l.s.c.\ function with $\bar{x}\in\operatorname{dom}f$ and $\bar{\mu}\in\partial f(\bar{x})$. There exists $\overline{W}\in\partial_{B}\operatorname{Prox}_{f}(\bar{x}+\bar{\mu})$ from the $B$-subdifferential \eqref{B-sub} of the proximal mapping \eqref{prox} such that
\begin{align*}
\bigcup_{V\in\mathcal{J}\operatorname{Prox}_{f}(\bar{x}+\bar{\mu})}\operatorname{rge}V=\operatorname{rge}\overline{W}
\end{align*}
and that for any $y\in\operatorname{rge}\overline{W}$, we have $y=\overline{W}\bar{d}$, where $\bar{d}$ satisfies the equalities
\begin{align*}	         \min\limits_{{d\in\mathcal{R}^{m},y=Vd}\atop{V\in{\mathcal{J}\operatorname{Prox}_{f}(\bar{x}+\bar{\mu})}}}\langle y,d-y\rangle=\langle y,\bar{d}-y\rangle=\big\langle y,(\overline{W}^{\dagger}-I)y\big\rangle.
\end{align*}

Assumption (A1) is not restrictive at all, while (A2) holds for many broad classes of functions that frequently appear in composite optimization; see below. In practical numerical implementations, the structures involved in (A2) are often computed as a part of the solution process.

\begin{definition}\label{c2-reduc}
{\bf(i)} 
Let $C\subset\mathcal{R}^{m}$ and $K\subset\mathcal{R}^{s}$ be convex closed sets. We say that $C$ is {\sc $\mathcal{C}^2$-reducible} to $K$, at a point $x\in C$, if there exist a neighborhood $N$ at $x$ and a ${\cal C}^2$-smooth mapping $\Xi:N\rightarrow\mathcal{R}^{s}$ such that
$\nabla\Xi(x):\mathcal{R}^{m}\rightarrow\mathcal{R}^{s}$ is surjective and $C\cap N=\{x\in N\, |\, \Xi(x)\in K\}$.We say that the reduction is pointed if the tangent cone $\mathcal{T}_{K}(\Xi(x))$ is pointed. If in addition the set $K-\Xi(x)$ is a pointed closed convex cone, we say that $C$ is {\sc $\mathcal{C}^{2}$-cone reducible} at $x$. We can assume without loss of generality that $\Xi(x)=0$.

{\bf(ii)} An l.s.c.\ convex function $f:\mathcal{R}^{m}\rightarrow\overline{\mathcal{R}}$ is {\sc $\mathcal{C}^{2}$-cone reducible} at $x\in{\rm dom}\,f$ if the associated epigraphical set $\operatorname{epi}f$ is $\mathcal{C}^{2}$-cone reducible at $(x,f(x))$. Furthermore, $f$ is said to be $\mathcal{C}^{2}$-cone reducible if it is $\mathcal{C}^{2}$-cone reducible at every point $x\in\operatorname{dom}f$.
\end{definition}\vspace*{-0.05in}

It is shown in \cite[Proposition~3.1]{Tangwang2024} that {\em every $\mathcal{C}^{2}$-cone reducible function} satisfies assumption (A1). Furthermore, it is proved in Appendices of \cite{MordukhovichTangWang2025,MordukhovichTangWang-tilt-2025} that assumption (A2) holds for the indicator function of the {\em positive semidefinite cone}, the {\em nuclear norm} function, and the {\em spectral norm} function, which all are important in applications.\vspace*{0.03in}

Before establishing the main result of this section about the calculation of the second-order variational function \eqref{Gamma}, we present two important lemmas of their own interest.\vspace*{-0.05in}

\begin{lemma}\label{prop-second-subderiv-sub-conv}
Let $f:\mathcal{R}^{m}\rightarrow\overline{\mathcal{R}}$ be an l.s.c.\ convex function with  $\bar{x}\in\operatorname{dom}f$ and $\bar{\mu}\in\partial f(\bar{x})$ under the fulfillment of assumption {\rm(A1)} at $\bar{x}$, and let $d\in\operatorname{dom}d^{2}f(\bar{x},\bar{\mu})$. For any sequences $(x^{k},\mu^{k})\rightarrow(\bar{x},\bar{\mu})$ with $\mu^{k}\in\partial f(x^{k})$ and $d^{k}\rightarrow d$ with $d^{k}\in\operatorname{dom}d^{2}f(x^{k},\mu^{k})$, there exist subsequences $\{(x^{k_{\nu}},\mu^{k_{\nu}})\}$ with $\mu^{k_{\nu}}\in\partial f(x^{k_{\nu}})$ and $\{d^{k_{\nu}}\}$ with $d^{k_{\nu}}\in\operatorname{dom}d^{2}f(x^{k_{\nu}},\mu^{k_{\nu}})$ such that the parabolic subderivative \eqref{parab-subder} has the limiting property
\begin{align*}
\lim_{k_{\nu}\rightarrow\infty}d^{2}f(x^{k_{\nu}},\mu^{k_{\nu}})(d^{k_{\nu}})=d^{2}f(\bar{x},\bar{\mu})(d).
\end{align*}
\end{lemma}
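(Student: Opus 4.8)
The plan is to prove the claimed limiting relation by sandwiching $\liminf_k d^2 f(x^k,\mu^k)(d^k)$ between two matching bounds: a lower bound valid for the whole sequence, namely $\liminf_k d^2 f(x^k,\mu^k)(d^k)\ge d^2 f(\bar x,\bar\mu)(d)$, and an upper bound valid along some subsequence, namely $\limsup_\nu d^2 f(x^{k_\nu},\mu^{k_\nu})(d^{k_\nu})\le d^2 f(\bar x,\bar\mu)(d)$. Together these force $\liminf_k d^2 f(x^k,\mu^k)(d^k)=d^2 f(\bar x,\bar\mu)(d)$, so the subsequence realizing the $\liminf$ is the required one. The engine for both bounds is the parabolic-regularity representation guaranteed by assumption {\rm(A1)}: at the base point and at every $(x^k,\mu^k)$ one may write the second-order subderivative as the infimal projection of the parabolic subderivative,
\begin{align*}
d^2 f(x,\mu)(d)=\inf_{w\in\mathcal{R}^m}\big\{d^2 f(x)(d,w)-\langle\mu,w\rangle\big\},
\end{align*}
with the infimum attained by virtue of parabolic epi-differentiability. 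Throughout I would use that, for convex $f$, $\operatorname{dom}d^2 f(x,\mu)$ lies in the critical cone $\{d\,|\,df(x)(d)=\langle\mu,d\rangle\}$ and that $w\mapsto d^2 f(x)(d,w)$ is convex (cf.\ \cite[Proposition~3.41]{Bonnans2000}).

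For the lower bound I would select, for each $k$, a near-minimizer $w^k$ in the representation at $(x^k,\mu^k)$, so that $d^2 f(x^k,\mu^k)(d^k)\ge d^2 f(x^k)(d^k,w^k)-\langle\mu^k,w^k\rangle-1/k$. The first task is to show that $\{w^k\}$ is bounded; this comes from the coercivity of the convex objective $w\mapsto d^2 f(x^k)(d^k,w)-\langle\mu^k,w\rangle$, whose recession behavior is controlled by the associated second-order tangent set and is uniform in $k$ because $d\in\operatorname{dom}d^2 f(\bar x,\bar\mu)$ keeps the limiting value finite. Passing to a convergent subsequence $w^k\to w$ and invoking the joint lower semicontinuity of the parabolic subderivative $(x,d,w)\mapsto d^2 f(x)(d,w)$ (which follows from its $\liminf$ definition together with a diagonal extraction), one obtains $\liminf_k d^2 f(x^k,\mu^k)(d^k)\ge d^2 f(\bar x)(d,w)-\langle\bar\mu,w\rangle\ge d^2 f(\bar x,\bar\mu)(d)$, the last step being the definition of the infimal projection at $(\bar x,\bar\mu)$.

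For the upper bound I would fix a near-optimal $\bar w$ at the base point, so that $d^2 f(\bar x)(d,\bar w)-\langle\bar\mu,\bar w\rangle$ is within $\eps$ of $d^2 f(\bar x,\bar\mu)(d)$, and then manufacture admissible parabolic directions $w^k\to\bar w$ for which $\limsup_k d^2 f(x^k)(d^k,w^k)\le d^2 f(\bar x)(d,\bar w)$; combined with $\langle\mu^k,w^k\rangle\to\langle\bar\mu,\bar w\rangle$ this yields the bound along a subsequence. This is the main obstacle: parabolic epi-differentiability supplies recovery sequences only for a \emph{fixed} base point $\bar x$ and a \emph{fixed} direction $d$, whereas here the base points $x^k$, the multipliers $\mu^k$, and the directions $d^k$ all move simultaneously. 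I expect to resolve it by a diagonalization that interlaces the defining $\liminf$'s of the parabolic subderivatives at the moving points $x^k$ with the recovery sequence at $\bar x$, using the convexity of $f$ and the local Lipschitz continuity of $\operatorname{Prox}_f$ to transfer the base-point estimate to the limit. An alternative, and arguably more transparent, route is to pass through the Moreau envelope: since $\operatorname{Prox}_f=(I+\partial f)^{-1}$ is $1$-Lipschitz and the subdifferential of $\tfrac12 d^2 f(x,\mu)$ coincides with the proto-derivative of $\partial f$ at $(x,\mu)$, the value $d^2 f(x^k,\mu^k)(d^k)$ is governed by the graphical derivative of $\partial f$ at $(x^k,\mu^k)$, whose semicontinuous behavior under the Lipschitzian proximal map delivers the same two inequalities. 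Either way, the crux is the joint semicontinuity of the second-order data under perturbation of the base point along the subdifferential graph.
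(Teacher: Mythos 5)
Your sandwich skeleton is logically sound (indeed, since every subsequence of $(x^{k},\mu^{k},d^{k})$ satisfies the same hypotheses, the lemma's subsequential claim is equivalent to convergence of the whole sequence), but both halves rest on ingredients that are unproved or outright false as stated. The decisive problem is your appeal to ``joint lower semicontinuity of the parabolic subderivative $(x,d,w)\mapsto d^{2}f(x)(d,w)$,'' which fails in general: take $f(x)=\max(0,x)$ on $\mathcal{R}$, $x^{k}=1/k\rightarrow\bar{x}=0$, $d^{k}=d=-1$, and any $w<0$; then $d^{2}f(x^{k})(d,w)=w<0=d^{2}f(\bar{x})(d,w)$. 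The base point enters the quotient through $f(x^{k}+\tau d^{k}+\frac{\tau^{2}}{2}w')-f(x^{k})-\tau\,df(x^{k})(d^{k})$, and $df(\cdot)(\cdot)$ is only upper semicontinuous, so a ``diagonal extraction'' from the defining $\liminf$'s does not convert quotients at the moving points $x^{k}$ into quotients at the fixed point $\bar{x}$; one needs quantitative interlacing of the type $\|x^{k}-\bar{x}\|=o(\tau^{2})$, which you never secure. (Restricting to critical directions, as the hypotheses do, gives nonnegativity of $w\mapsto d^{2}f(x^{k})(d^{k},w)-\langle\mu^{k},w\rangle$, but not the semicontinuity you need.) The boundedness of your near-minimizers $w^{k}$ is likewise unjustified: that objective is nonnegative but typically not coercive---its minimizer sets can be unbounded (already for polyhedral $g$ or indicator functions, where the second-order tangent sets contain lines)---so ``uniform coercivity in $k$'' is an assertion, not an argument. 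Finally, you concede the upper bound is the ``main obstacle'' and offer only a heuristic; so neither inequality is actually established.

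The paper closes exactly this gap by working entirely on the graphical-derivative side, never touching the infimal-projection formula in this proof. Under (A1), $D(\partial f)(x^{k},\mu^{k})(d^{k})=\partial\big(\frac{1}{2}d^{2}f(x^{k},\mu^{k})\big)(d^{k})$ is nonempty and bounded, and choosing $v^{k}$ therein gives the key identity $d^{2}f(x^{k},\mu^{k})(d^{k})=\langle v^{k},d^{k}\rangle$ (this is \cite[Proposition~3.4]{Tangwang2024}, where the inner product value is the same for every element of the graphical derivative). Directional differentiability of $\operatorname{Prox}_{f}$ yields $\operatorname{Prox}_{f}\big(x^{k}+\mu^{k}+\tau_{t}(v^{k}+d^{k})\big)=x^{k}+\tau_{t}d^{k}+o(\tau_{t})$, and the trick that replaces your missing semicontinuity is the selection of a subsequence with $\|(x^{k_{t}},\mu^{k_{t}})-(\bar{x},\bar{\mu})\|\leq\tau_{t}^{2}$: this rewrites the same expansion as one at the fixed base point $\bar{x}+\bar{\mu}$ in perturbed directions $\hat{d}^{k_{t}}\rightarrow\bar{v}+d$, so that the $1$-Lipschitz continuity of $\operatorname{Prox}_{f}$ forces $D\operatorname{Prox}_{f}(\bar{x}+\bar{\mu})(\bar{v}+d)=\{d\}$, i.e., $\bar{v}\in D(\partial f)(\bar{x},\bar{\mu})(d)$, whence $\langle v^{k_{t_{j}}},d^{k_{t_{j}}}\rangle\rightarrow\langle\bar{v},d\rangle=d^{2}f(\bar{x},\bar{\mu})(d)$. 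Your ``alternative route'' via $\operatorname{Prox}_{f}=(I+\partial f)^{-1}$ gestures in this direction, but without the identity $d^{2}f=\langle v,d\rangle$, the boundedness of the graphical derivative, and the $o(\tau^{2})$ interlacing, the argument does not close; note also that naive outer semicontinuity of $(x,\mu)\mapsto D(\partial f)(x,\mu)$ fails for the same reason your lsc claim does, which is precisely why the paper's quantitative construction is needed.
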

\begin{proof}
Since $d^{2}f(\bar{x},\bar{\mu})$ is positive homogeneous of degree 2, we only need to prove the result when $\|d\|\leq 1$.  It follows from \cite[Proposition~13.20 and Theorem~13.40]{Rockafellar1998},  
\cite[Proposition~3.2]{MordukhovichTangWang-tilt-2025} and \cite[Lemma 5.2]{MordukhovichTangWang2025} that $d^{2}f(x^{k},\mu^{k})$ is convex and $D(\partial f)(x^{k},\mu^{k})(d^{k})=\partial \left(\frac{1}{2}d^{2}f(x^{k},\mu^{k})\right)(d^{k})$ is bounded. Picking $v^{k}\in D(\partial f)(x^{k},\mu^{k})(d^{k})$, we get by \eqref{inverse-graphical-derivative} and \eqref{sum-rule-graphical-derivative} that $d^{k}\in D\operatorname{Prox}_{f}(x^{k}+\mu^{k})(v^{k}+d^{k})=\{\operatorname{Prox}_{\frac{1}{2}d^{2}f(x^{k},\mu^{k})}(v^{k}+d^{k})\}$. Then $\operatorname{Prox}_{f}$ is directionally differentiable at $x^{k}+\mu^{k}$ in the direction $v^{k}+d^{k}$ and there exists $\tau_{t}\downarrow0$ as $t\rightarrow\infty$ such that
\begin{align*}
\operatorname{Prox}_{f}(x^{k}+\mu^{k}+\tau_{t}(v^{k}+d^{k}))=\operatorname{Prox}_{f}(x^{k}+\mu^{k})+\tau_{t}d^{k}+o(\tau_{t})=x^{k}+\tau_{t}d^{k}+o(\tau_{t}).
\end{align*}
Select subsequences $\{(x^{k_{t}},\mu^{k_{t}})\}$ satisfying $\|(x^{k_{t}},\mu^{k_{t}})-(\bar{x},\bar{\mu})\|\leq\tau^{2}_{t}$, $\{d^{k_{t}}\}$ with $d^{k_{t}}\in\operatorname{dom}d^{2}f(x^{k_{t}},\mu^{k_{t}})$, and $\{v^{k_{t}}\}$ with $v^{k_{t}}\in D(\partial f)(x^{k_{t}},\mu^{k_{t}})(d^{k_{t}})$.
Since 
\begin{align*}
x^{k_{t}}+\mu^{k_{t}}+\tau_{t}(v^{k_{t}}+d^{k_{t}})=\bar{x}+\bar{\mu}+\tau_{t}\hat{d}^{k_{t}},\quad\hat{d}^{k_{t}}=v^{k_{t}}+d^{k_{t}}+\frac{x^{k_{t}}+\mu^{k_{t}}-\bar{x}-\bar{\mu}}{\tau_{t}},
\end{align*}
and $\{v^{k_{t}}\}$ is bounded, choose subsequences $\{v^{k_{t_{j}}}\}$ with $v^{k_{t_{j}}}\rightarrow\bar{v}$ and $\hat{d}^{k_{t_{j}}}\rightarrow\bar{v}+d$ such that
\begin{align*}
&\operatorname{Prox}_{f}(\bar{x}+\bar{\mu}+\tau_{t_{j}}\hat{d}^{k_{t_{j}}})=\operatorname{Prox}_{f}(x^{k_{t_{j}}}+\mu^{k_{t_{j}}}+\tau_{t_{j}}(v^{k_{t_{j}}}+d^{k_{t_{j}}}))\\
&=\operatorname{Prox}_{f}(x^{k_{t_{j}}}+\mu^{k_{t_{j}}})+\tau_{t_{j}}d^{k_{t_{j}}}+o(\tau_{t_{j}})=x^{k_{t_{j}}}+\tau_{t_{j}}d^{k_{t_{j}}}+o(\tau_{t_{j}})\\
&=\bar{x}+\tau_{t_{j}}\hat{d}+o(\tau_{t_{j}})=\operatorname{Prox}_{f}(\bar{x}+\bar{\mu})+\tau_{t_{j}}\hat{d}+o(\tau_{t_{j}}),\quad\hat{d}=d^{k_{t_{j}}}+\frac{x^{k_{t_{j}}}-\bar{x}}{\tau_{t_{j}}}\rightarrow d.
\end{align*}
Letting $k_{t_{j}}\rightarrow\infty$ above, it follows from the directional differentiability of $\operatorname{Prox}_{f}$ at $\bar{x}+\bar{\mu}$ in the direction $\bar{v}+d$ that $D\operatorname{Prox}_{f}(\bar{x}+\bar{\mu})(\bar{v}+d)=\{d\}$. Combining \eqref{inverse-graphical-derivative} and \eqref{sum-rule-graphical-derivative}, we get $\bar{v}\in D(\partial f)(\bar{x},\bar{\mu})(d)$. Applying finally
\cite[Proposition~3.4]{Tangwang2024} yields
\begin{align*}
d^{2}f(x^{k_{t_{j}}},\mu^{k_{t_{j}}})(d^{k_{t_{j}}})=\langle v^{k_{t_{j}}},d^{k_{t_{j}}}\rangle\rightarrow\langle\bar{v},d\rangle=d^{2}f(\bar{x},\bar{\mu}),
\end{align*}
which therefore completes the proof of the lemma.
\end{proof}\vspace*{-0.05in}

The next lemma reveals a fundamental relationship between the second subderivative \eqref{2subder} of an extended-real-valued convex function and its {\em second-order subdifferential} introduced in \cite{Mordukhovich1992} as the limiting coderivative in \eqref{cod} of the limiting subdifferential \eqref{lsub}.\vspace*{-0.05in}

\begin{lemma}\label{lemma-second-subderivative-equality}
Let $f:\mathcal{R}^{m}\rightarrow\overline{\mathcal{R}}$ be an l.s.c.\ convex function with $\bar{x}\in\operatorname{dom}f$ and $\bar{\mu}\in\partial f(\bar{x})$ under the fulfillment of assumption {\rm(A1)} at $\bar{x}$. Then we have the equality
\begin{align}\label{leq-Gamma-f-cond}
\min_{v\in D^{*}(\partial f)(\bar{x},\bar{\mu})(d)}\langle v,d\rangle=d^{2}f(\bar{x},\bar{\mu})(d)\;\mbox{ for all }\;d\in\R^m \;
\mbox{ with }\;df(\bar{x})(d)=\langle\bar{\mu},d\rangle.
\end{align}
\end{lemma}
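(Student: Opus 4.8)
\emph{Strategy.} The plan is to prove the two inequalities ``$\le$'' and ``$\ge$'' in \eqref{leq-Gamma-f-cond} by transferring every computation from the subgradient mapping $\partial f$ to its resolvent $P:=\operatorname{Prox}_{f}=(I+\partial f)^{-1}$. The two graphs are tied together by the fixed linear isomorphism $(x,u)\mapsto(x+u,x)$, which carries $\operatorname{gph}\partial f$ onto $\operatorname{gph}P$ and, being linear, transports tangent cones, normal cones, and hence the limiting coderivative between the two objects. Writing $\bar z:=\bar x+\bar\mu$ (so that $P(\bar z)=\bar x$), this isomorphism turns the membership $v\in D^{*}(\partial f)(\bar x,\bar\mu)(d)$ into the equivalent condition $-d\in D^{*}P(\bar z)(-(d+v))$. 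The structural facts I would exploit are that $P$ is single-valued, globally Lipschitz, and firmly nonexpansive, so every $W\in\mathcal{J}P(\bar z)$ (and every $W\in\partial_{B}P(\bar z)$) is symmetric with $0\preceq W\preceq I$; and that under assumption (A1) the map $q:=\tfrac12 d^{2}f(\bar x,\bar\mu)$ is a proper l.s.c.\ convex function, positively homogeneous of degree two, with $D(\partial f)(\bar x,\bar\mu)=\partial q$ by \cite[Theorem~13.40]{Rockafellar1998} and \cite[Proposition~3.2]{MordukhovichTangWang-tilt-2025}.

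\emph{The inequality ``$\le$''.} Since $df(\bar x)(d)=\langle\bar\mu,d\rangle$ we have $d\in\operatorname{dom}q$, so I pick $\bar v\in\partial q(d)=D(\partial f)(\bar x,\bar\mu)(d)$. Euler's relation for the degree-two positively homogeneous convex function $q$ gives $\langle\bar v,d\rangle=2q(d)=d^{2}f(\bar x,\bar\mu)(d)$, which is exactly \cite[Proposition~3.4]{Tangwang2024}. It then remains to verify admissibility $\bar v\in D^{*}(\partial f)(\bar x,\bar\mu)(d)$, i.e.\ the self-duality of the monotone subgradient graph: the rotation $(w,z)\mapsto(z,-w)$ sends $\mathcal{T}_{\operatorname{gph}\partial f}(\bar x,\bar\mu)=\operatorname{gph}\partial q$ into $\mathcal{N}_{\operatorname{gph}\partial f}(\bar x,\bar\mu)$. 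I would establish this as an outer limit of the corresponding identity at differentiability points of $P$, where $\operatorname{gph}\partial f$ is locally a smooth manifold with tangent space $\mathcal{T}$ satisfying $\mathcal{N}=\mathcal{T}^{\perp}=\{(z,-w)\mid(w,z)\in\mathcal{T}\}$. Then $(d,\bar v)\in\mathcal{T}_{\operatorname{gph}\partial f}(\bar x,\bar\mu)$ forces $(\bar v,-d)\in\mathcal{N}_{\operatorname{gph}\partial f}(\bar x,\bar\mu)$, so the left-hand side of \eqref{leq-Gamma-f-cond} is at most $\langle\bar v,d\rangle=d^{2}f(\bar x,\bar\mu)(d)$.

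\emph{The inequality ``$\ge$''.} Fix an arbitrary $v\in D^{*}(\partial f)(\bar x,\bar\mu)(d)$ and pass to the resolvent. Representing the limiting coderivative of the Lipschitz map $P$ through its differentiability points, I would obtain $z^{k}\to\bar z$ at which $P$ is differentiable, set $W^{k}:=\nabla P(z^{k})$ (symmetric, $0\preceq W^{k}\preceq I$), and choose $y^{k}\to y:=d+v$ with $d^{k}:=W^{k}y^{k}\to d$ and $v^{k}:=y^{k}-d^{k}\to v$, so that $(x^{k},\mu^{k}):=(P(z^{k}),z^{k}-P(z^{k}))\to(\bar x,\bar\mu)$ lies in $\operatorname{gph}\partial f$ with $v^{k}\in D^{*}(\partial f)(x^{k},\mu^{k})(d^{k})$. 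At each such point $D\operatorname{Prox}_{f}(z^{k})=W^{k}$ identifies $q^{k}:=\tfrac12 d^{2}f(x^{k},\mu^{k})$ with $w\mapsto\tfrac12\langle w,((W^{k})^{\dagger}-I)w\rangle$ on $\operatorname{rge}W^{k}$, and since $d^{k}=W^{k}y^{k}\in\operatorname{rge}W^{k}$ a direct computation gives the exact identity $\langle v^{k},d^{k}\rangle=\langle y^{k},(W^{k}-(W^{k})^{2})y^{k}\rangle=d^{2}f(x^{k},\mu^{k})(d^{k})$. Passing to the limit, the left side converges to $\langle v,d\rangle$, while invoking Lemma~\ref{prop-second-subderiv-sub-conv} to extract a subsequence along which $d^{2}f(x^{k_{\nu}},\mu^{k_{\nu}})(d^{k_{\nu}})\to d^{2}f(\bar x,\bar\mu)(d)$ yields $\langle v,d\rangle=d^{2}f(\bar x,\bar\mu)(d)$, hence in particular $\langle v,d\rangle\ge d^{2}f(\bar x,\bar\mu)(d)$; together with the previous paragraph the minimum is attained and equals $d^{2}f(\bar x,\bar\mu)(d)$.

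\emph{Main obstacle.} The crux is the ``$\ge$'' direction, and precisely the claim that \emph{every} element of $D^{*}(\partial f)(\bar x,\bar\mu)(d)$ is reachable through differentiability points of $P$ at which the exact quadratic identity above holds, so that no singular ``kink'' normal can contribute a value strictly below $d^{2}f(\bar x,\bar\mu)(d)$. The one-sided polar inequality $\langle v,d\rangle\le d^{2}f(\bar x,\bar\mu)(d)$, which is available at arbitrary approximating graph points by pairing against the tangent direction $d^{k}$, only controls the value from above; the matching lower bound must therefore come from the resolvent representation, where firm nonexpansiveness ($0\preceq W\preceq I$) pins the admissible directions $d^{k}=W^{k}y^{k}$ into $\operatorname{rge}W^{k}$ and drives the approximation through regular points whenever $d\neq0$. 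Justifying this generation of the limiting coderivative by differentiability points of $P$, and simultaneously controlling the jump of the second subderivative along the approximating sequence — which is exactly the attainment supplied by Lemma~\ref{prop-second-subderiv-sub-conv} — is the delicate step that welds the two inequalities into the asserted equality.
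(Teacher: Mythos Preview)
Your overall architecture mirrors the paper's: split the equality into ``$\le$'' and ``$\ge$'', feed the ``$\ge$'' direction through the resolvent, and use Lemma~\ref{prop-second-subderiv-sub-conv} to control the second subderivative along approximating sequences. The difference is in how each inequality is secured.

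For ``$\le$'' you are essentially reproving the content of \cite[Lemma~3.6]{Chieu2021} by a rotation argument on $\operatorname{gph}\partial f$. That argument is sound at smooth points, and your plan to pass to the limit is exactly how such proofs go; the paper simply cites the result together with the Euler identity $\langle\bar v,d\rangle=d^{2}f(\bar x,\bar\mu)(d)$ for $\bar v\in D(\partial f)(\bar x,\bar\mu)(d)$ (\cite[Proposition~3.4]{Tangwang2024}) and twice epi-differentiability from \cite[Proposition~3.8]{Mohammadi2020}, \cite[Theorem~13.57]{Rockafellar1998}. No substantive disagreement here.

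The genuine gap is your ``$\ge$'' step, and you have correctly located it in the ``Main obstacle'' paragraph, but the proposed resolution does not close it. Your argument requires that \emph{every} element $v\in D^{*}(\partial f)(\bar x,\bar\mu)(d)$ be realized as a limit of regular coderivative elements taken at \emph{differentiability} points of $P=\operatorname{Prox}_{f}$, so that the identity $\langle v^{k},d^{k}\rangle=d^{2}f(x^{k},\mu^{k})(d^{k})$ is available termwise. For a Lipschitz map this representation through differentiability points is \emph{not} valid in general: the limiting coderivative is the outer limit of $\widehat D^{*}P(z^{k})$ over \emph{all} $z^{k}\to\bar z$, and at nondifferentiable $z^{k}$ the regular coderivative can strictly exceed $\{\nabla P(z^{k})^{T}\,\cdot\,\}$. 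Firm nonexpansiveness of $P$ constrains the spectrum of each $W\in\mathcal{J}P(\bar z)$ but does not force the limiting normal cone to be generated by smooth points. Consequently, you cannot conclude $\langle v,d\rangle=d^{2}f(\bar x,\bar\mu)(d)$ for an arbitrary $v$, and without that the inequality $\min_v\langle v,d\rangle\ge d^{2}f(\bar x,\bar\mu)(d)$ is unproved.

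The paper sidesteps this entirely: instead of arguing about a generic $v$, it invokes \cite[Theorem~3.1]{MordukhovichTangWang-tilt-2025}, which directly supplies sequences $(x^{k},\mu^{k})\to(\bar x,\bar\mu)$ with $\mu^{k}\in\partial f(x^{k})$ and $d^{k}\to d$ along which
\[
\min_{v\in D^{*}(\partial f)(\bar x,\bar\mu)(d)}\langle v,d\rangle\ \ge\ \limsup_{k\to\infty} d^{2}f(x^{k},\mu^{k})(d^{k}),
\]
and then Lemma~\ref{prop-second-subderiv-sub-conv} upgrades the right-hand side to $d^{2}f(\bar x,\bar\mu)(d)$ along a subsequence. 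That is a statement about the \emph{minimum}, not about every $v$, and it does not rely on any ``differentiability-point generation'' of $D^{*}P$. If you want to keep your resolvent framework, the honest fix is to replace the per-$v$ identity by the inequality for the minimum and appeal to \cite[Theorem~3.1]{MordukhovichTangWang-tilt-2025} (or reprove its content) rather than to the unjustified smooth-point approximation.
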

\begin{proof}
It follows from assumption (A1) and \cite[Lemma~5.2]{MordukhovichTangWang2025} that
\begin{align*}
\operatorname{dom}d^{2}f(\bar{x},\bar{\mu})&=\operatorname{dom}D(\partial f)(\bar{x},\bar{\mu})=\{d\,|\,df(\bar{x})(d)=\langle\bar{\mu},d\rangle\}.
\end{align*}
Applying \cite[Proposition~3.8]{Mohammadi2020}, \cite[Lemma~3.6]{Chieu2021}, and \cite[Theorem~13.57]{Rockafellar1998} tells us that $f$ is twice epi-differentiable at $\bar{x}$ for $\bar{\mu}$ and the inequality ``$\le$" holds in \eqref{leq-Gamma-f-cond}. To verify the reverse inequality therein, deduce from \cite[Theorem~3.1]{MordukhovichTangWang-tilt-2025} the existence of sequences $(x^{k},\mu^{k})\rightarrow(\bar{x},\bar{\mu})$ with $\mu^{k}\in\partial f(x^{k})$ and $d^{k}\rightarrow d$ as $k\to\infty$ along which we have
\begin{align}\label{geq-Gamma-f-cond}
\min_{v\in D^{*}(\partial f)(\bar{x},\bar{\mu})(d)}\langle v,d\rangle\geq\limsup_{k\rightarrow\infty}d^{2}f(x^{k},\mu^{k})(d^{k}).
\end{align}
Lemma~\ref{prop-second-subderiv-sub-conv} gives us subsequences $\{x^{k_{\nu}},\mu^{k_{\nu}}\}$ with $\mu^{k_{\nu}}\in\partial f(x^{k_{\nu}})$ and $\{d^{k_{\nu}}\}$ from the domain of $d^{2}f(x^{k_{\nu}},\mu^{k_{\nu}})$ satisfying the conditions
\begin{align*}
\limsup_{k\rightarrow\infty}d^{2}f(x^{k},\mu^{k})(d^{k})\geq\lim_{k_{\nu}\rightarrow\infty}d^{2}f(x^{k_{\nu}},\mu^{k_{\nu}})(d^{k_{\nu}})=d^{2}f(\bar{x},\bar{\mu})(d).
\end{align*}
Combining them with \eqref{geq-Gamma-f-cond} verifies \eqref{leq-Gamma-f-cond} and thus completes the proof.
\end{proof}\vspace*{-0.05in}

Now we show that the second-order variational function \eqref{Gamma} associated with $f$ reduces to the second-order subderivative \eqref{2subder} of $f$. This requires both assumptions (A1) and(A2).\vspace*{-0.05in}

\begin{theorem}\label{theorem:SOVF-second-order-subderivative}
Let $f:\mathcal{R}^{m}\rightarrow\overline{\mathcal{R}}$ be an l.s.c. convex function with $\bar{x}\in\operatorname{dom}f$ and $\bar{\mu}\in\partial f(\bar{x})$, and let both assumptions {\rm(A1)} and {\rm(A2)} hold at $\bar{x}$ for $\bar{\mu}$. Then we have the identity
\begin{align}\label{second-variational-func-equality}
\Gamma_{f}(\bar{x},\bar{\mu})(d)=d^{2}f(\bar{x},\bar{\mu})(d)\;\mbox{ for all }\;d\in\operatorname{dom}d^{2}f(\bar{x},\bar{\mu}).
\end{align}
\end{theorem}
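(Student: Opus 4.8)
The plan is to prove the identity $\Gamma_{f}(\bar{x},\bar{\mu})(d)=d^{2}f(\bar{x},\bar{\mu})(d)$ by connecting both sides to the second-order subdifferential $D^{*}(\partial f)(\bar{x},\bar{\mu})$ and exploiting the key relationship (established earlier in the excerpt) between the proximal mapping $\operatorname{Prox}_{f}$ and the subdifferential $\partial f$ via $\operatorname{Prox}_{f}=(I+\partial f)^{-1}$ (at the shifted point $\bar{x}+\bar{\mu}$). The central device is Lemma~\ref{lemma-second-subderivative-equality}, which already gives $\min_{v\in D^{*}(\partial f)(\bar{x},\bar{\mu})(d)}\langle v,d\rangle=d^{2}f(\bar{x},\bar{\mu})(d)$ on the relevant domain. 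So the burden reduces to showing that $\Gamma_{f}(\bar{x},\bar{\mu})(d)$ computes the same quantity.

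First I would unwind the definition \eqref{Gamma} of $\Gamma_{f}$. For $d\in\operatorname{dom}d^{2}f(\bar{x},\bar{\mu})$, assumption (A2) guarantees a distinguished $\overline{W}\in\partial_{B}\operatorname{Prox}_{f}(\bar{x}+\bar{\mu})$ whose range exhausts $\bigcup_{V}\operatorname{rge}V$, and for which the minimization defining $\Gamma_{f}$ at any $y\in\operatorname{rge}\overline{W}$ is attained with value $\langle y,(\overline{W}^{\dagger}-I)y\rangle$. Since by (A1) and \cite[Lemma~5.2]{MordukhovichTangWang2025} the domain of $d^{2}f(\bar{x},\bar{\mu})$ equals $\{d\,|\,df(\bar{x})(d)=\langle\bar{\mu},d\rangle\}=\operatorname{dom}D(\partial f)(\bar{x},\bar{\mu})$, I would check that such $d$ indeed lies in the range set governing $\Gamma_{f}$. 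The plan is then to substitute $y=d$ (viewing $d$ as the image point) so that $\Gamma_{f}(\bar{x},\bar{\mu})(d)=\langle d,(\overline{W}^{\dagger}-I)d\rangle$, reducing the theorem to showing this quantity coincides with $\min_{v}\langle v,d\rangle$.

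Next I would translate $\overline{W}\in\partial_{B}\operatorname{Prox}_{f}(\bar{x}+\bar{\mu})$ into coderivative language. Writing $\operatorname{Prox}_{f}=(I+\partial f)^{-1}$, the element $W=\overline{W}$ represents a directional derivative of the proximal map, and via the inverse/summation rules \eqref{inverse-graphical-derivative}--\eqref{sum-rule-graphical-derivative} the vector $v:=(\overline{W}^{\dagger}-I)d$ should satisfy $v\in D(\partial f)(\bar{x},\bar{\mu})(d)$, with $d=\overline{W}(v+d)$, so that $\langle d,(\overline{W}^{\dagger}-I)d\rangle=\langle v,d\rangle$. The twice epi-differentiability established inside Lemma~\ref{lemma-second-subderivative-equality} and \cite[Proposition~3.4]{Tangwang2024} (used in Lemma~\ref{prop-second-subderiv-sub-conv}) identify $d^{2}f(\bar{x},\bar{\mu})(d)=\langle v,d\rangle$ for the specific graphical-derivative element $v$ selected by $\overline{W}$. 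Thus $\Gamma_{f}(\bar{x},\bar{\mu})(d)=\langle v,d\rangle=d^{2}f(\bar{x},\bar{\mu})(d)$, giving the ``$\ge$'' half directly and the ``$\le$'' half from the minimality built into (A2).

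\emph{The hard part} will be reconciling the generalized Jacobian (a convex-hull, Clarke-type object appearing in \eqref{Gamma}) with the $B$-subdifferential element $\overline{W}$ and the graphical derivative $D(\partial f)(\bar{x},\bar{\mu})$: one must verify that the minimum in $\Gamma_{f}$—taken over \emph{all} $U\in\mathcal{J}\operatorname{Prox}_{f}(\bar{x}+\bar{\mu})$ and all $d$ with $v=Ud$—is actually achieved at the single $B$-subdifferential element $\overline{W}$ and equals the coderivative-minimal value $\min_{v\in D^{*}(\partial f)(\bar{x},\bar{\mu})(d)}\langle v,d\rangle$ from Lemma~\ref{lemma-second-subderivative-equality}. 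This is precisely what assumption (A2) is engineered to supply (its second displayed equality pins the minimizer to $\overline{W}^{\dagger}$), so the real work is carefully matching the index sets and confirming that the $\min$ over the Clarke Jacobian does not fall below the coderivative value; I would handle the two inequalities separately, using (A2) for attainment and Lemma~\ref{lemma-second-subderivative-equality} together with the convexity of $d^{2}f$ for the lower bound.
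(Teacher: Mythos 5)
Your overall architecture matches the paper's: both arguments funnel through the identity $\Gamma_{f}(\bar{x},\bar{\mu})(d)=\min_{v\in D^{*}(\partial f)(\bar{x},\bar{\mu})(d)}\langle v,d\rangle$ and then invoke Lemma~\ref{lemma-second-subderivative-equality}. The paper, however, obtains that identity and the domain matching $\operatorname{dom}\Gamma_{f}(\bar{x},\bar{\mu})=\operatorname{aff}\operatorname{dom}d^{2}f(\bar{x},\bar{\mu})$ by citing \cite[Theorem~3.8]{Mohammadi2020} and \cite[Propositions~3.2 and 3.3]{MordukhovichTangWang-tilt-2025}, and is done in three lines; you instead attempt to re-derive the identification from (A2) by hand, and it is exactly there that your argument has a genuine gap.

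The problematic step is your assertion that $v:=(\overline{W}^{\dagger}-I)d$ belongs to the graphical derivative $D(\partial f)(\bar{x},\bar{\mu})(d)$ because $d=\overline{W}(v+d)$ with $\overline{W}\in\partial_{B}\operatorname{Prox}_{f}(\bar{x}+\bar{\mu})$. A $B$-subdifferential element is a limit of Jacobians $\nabla\operatorname{Prox}_{f}(z^{k})$ at \emph{nearby} points $z^{k}\rightarrow\bar{x}+\bar{\mu}$; its action is captured by limiting constructions (the strict graphical derivative \eqref{sgr-der} and the coderivative \eqref{cod}), not by the graphical derivative \eqref{gr-der} at the reference point itself. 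For a generic Lipschitz map $F$ one can have $A\in\partial_{B}F(\bar{z})$ with $Aw\notin DF(\bar{z})(w)$: take $F=|\cdot|$ on $\mathcal{R}$, $\bar{z}=0$, $A=-1$, $w=1$, where $DF(0)(1)=\{1\}$. The paper itself warns at the end of Section~\ref{sec:preliminary} that \eqref{gr-der} lacks the calculus enjoyed by the limiting coderivative. This is why the correct target of the identification is $D^{*}(\partial f)$, a limiting object compatible with the limiting nature of $\mathcal{J}\operatorname{Prox}_{f}$; matching $\mathcal{J}\operatorname{Prox}_{f}$ with $D^{*}(\partial f)$ under (A1)--(A2) is precisely the content of \cite[Proposition~3.3]{MordukhovichTangWang-tilt-2025}, for which you supply no substitute. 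Your closing remark correctly locates the difficulty in reconciling the Clarke Jacobian with the coderivative-minimal value, but your proposed fix (attainment from (A2) plus convexity of $d^{2}f$) does not address the pointwise-versus-limiting mismatch: (A2) pins down the minimizer over $\mathcal{J}\operatorname{Prox}_{f}(\bar{x}+\bar{\mu})$ but says nothing about why that value equals $\min_{v\in D^{*}(\partial f)(\bar{x},\bar{\mu})(d)}\langle v,d\rangle$, which requires an approximation argument along $\operatorname{gph}\partial f$ in the spirit of Lemma~\ref{prop-second-subderiv-sub-conv}. If you repair that one step---for instance by showing your $v$ lies in $D^{*}(\partial f)(\bar{x},\bar{\mu})(d)$ with minimal pairing value, rather than in $D(\partial f)(\bar{x},\bar{\mu})(d)$---the rest of your plan goes through and coincides with the paper's proof.
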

\begin{proof} It follows from \cite[Theorem~3.8]{Mohammadi2020} and \cite[Proposition~3.2]{MordukhovichTangWang-tilt-2025} that
\begin{align*}
\operatorname{dom}\Gamma_{f}(\bar{x},\bar{\mu})=\operatorname{aff}\operatorname{dom}d^{2}f(\bar{x},\bar{\mu}).
\end{align*}
Furthermore, \cite[Proposition~3.3]{MordukhovichTangWang-tilt-2025} tells that
\begin{align*}
\Gamma_{f}(\bar{x},\bar{\mu})(d)=\min_{v\in D^{*}(\partial f)(\bar{x},\bar{\mu})(d)}\left\langle v,d\right\rangle\;\mbox{ whenever }\;d\in\operatorname{dom}\Gamma_{f}(\bar{x},\bar{\mu}).
\end{align*}
Consequently, applying Lemma~\ref{lemma-second-subderivative-equality} brings us to \eqref{second-variational-func-equality} and thus completes the proof.
\end{proof}\vspace*{-0.05in}

Note that the identity \eqref{second-variational-func-equality} in Theorem~\ref{theorem:SOVF-second-order-subderivative} allows us to replace $\Gamma_f$ by the second-order subderivative in the SSOSC definition \eqref{SSOSC}, which can make it easier for computation. On the other hand, Theorem~\ref{theorem:SOVF-second-order-subderivative} leads us to the reformulation of SOSC \eqref{SOSC} in the form
\begin{align*}
\sup_{u\in\Lambda(\bar{x},0,0)}\left\{\left\langle\nabla_{xx}^{2}L(\bar{x},u)d,d\right\rangle+\Gamma_{g}(F(\bar{x}),u)(\nabla F(\bar{x})d)\right\}>0,\;\mbox{ for all }\;d\in\mathcal{C}(\bar{x})\setminus\{0\},
\end{align*} 
which highlights the relationship between the SOSC and SSOSC in the general composite setting.\vspace*{0.03in} 

We conclude this section by explicit calculation of SSOSC \eqref{SSOSC} for an important class of composite optimization problems \eqref{comp-prob}, where $g:= \|\cdot\|_{2}$ is the {\em spectral norm function} associated with a given matrix $A$. Recall that $\|A\|_{2}$ is defined as the largest singular value of $A$. This function plays a pivotal role in applications, particularly to machine learning and data science. As mentioned above, both assumptions (A1) and (A2) are satisfied for \eqref{comp-prob} with $g= \|\cdot\|_{2}$. The following example provides complete calculations of SSOSC for this problem via its given data.\vspace*{-0.05in}

\begin{example}\label{exa-ssosc}
{\rm Let $p\leq q$ be two positive integers, and let $g$ be the matrix spectral norm function $\|\cdot\|_{2}$, and let $\bar{x}$ be a local minimizer of problem \eqref{comp-prob} with $u\in\Lambda(\bar{x},0,0)$. Consider the matrix $A:=F(\bar{x})+u$ whose singular value decomposition is $A=R[\Sigma\ 0]S^{T}$ with $R\in\mathcal{R}^{p\times p}$ and $S\in\mathcal{R}^{q\times q}$ being orthogonal matrices. Suppose that $S=[S_{1}\ S_{2}]$ with $S_{1}\in\mathcal{R}^{q\times p}$, $S_{2}\in\mathcal{R}^{q\times(q-p)}$ and that $\Sigma=\operatorname{Diga}(\sigma)$, where $\sigma=(\sigma_{1},\sigma_{2},\cdots,\sigma_{p})$ with $\sigma_{1}\geq\ldots\geq\sigma_{r}>0=\sigma_{r+1}=\ldots=\sigma_{p}$. 
	
Applying the result in \cite[Appendix]{MordukhovichTangWang-tilt-2025}, SSOSC \eqref{SSOSC} can be expressed as the following form according to the value of the nuclear norm $\|A\|_{*}$, which is the sum of the singular values of $A$.
	
{\bf(i)} $\|A\|_{*}<1$. Since $\mathcal{C}(\bar{x})=\{0\}$, SSOSC \eqref{SSOSC} holds trivially.
	
{\bf(ii)} $\|A\|_{*}=1$. Then $\Gamma_{g}(F(\bar{x}),u)(\nabla F(\bar{x})d)=0$ for $u\in\Lambda(\bar{x},0,0)$ and SSOSC \eqref{SSOSC} is 
\begin{align*}
&\sup_{u\in\Lambda(\bar{x},0,0)}\left\{\langle\nabla^{2}_{xx}L(\bar{x},u)d,d\rangle\right\}>0\;\mbox{ for all }\;d\ne0\;\mbox{ with}\\
&\qquad\qquad\;\nabla F(\bar{x})d\in\left\{\left.R\left[\begin{array}{cc}
aI_{r} & 0_{r\times (q-r)}\\
0_{(p-r)\times r} & D
\end{array}\right]S^{T}\,\right|\,a\in\mathcal{R},\,D\in\mathcal{R}^{(p-r)\times(q-r)}\right\}.
\end{align*}
	
{\bf(iii)} $\|A\|_{*}>1$. Let $k_{1}(\sigma)$ and $k_{2}(\sigma)$ denote the maximal indexes of the sets
\begin{align*}
\left\{i\,\left|\,\sigma_{i}>\frac{1}{i}\left(\sum_{j=1}^{i}\sigma_{j}-1\right)\right.\right\}\;\mbox{ and }\;\left\{i\,\left|\,\sigma_{i}\geq\frac{1}{i}\left(\sum_{j=1}^{i}\sigma_{j}-1\right)\right.\right\},
\end{align*}
respectively, $\mathrm{B}:=\{x\in\mathcal{R}^{p}\,|\,\|x\|_{1}\leq 1\}$ with $\operatorname{\Pi}_{\mathrm{B}}(\sigma):=(p_{1},\ldots,p_{s},0,\ldots,0)^{T}$ and $s\in[k_{1}(\sigma),k_{2}(\sigma)]$, $\nabla F(\bar{x})d:=Y$ with $R^{T}YS=(y_{ij})_{p\times q}$. Then SSOSC \eqref{SSOSC} can be written as
\begin{align*}
&\sup_{u\in\Lambda(\bar{x},0,0)}\left\{\langle\nabla_{xx}^{2}L(\bar{x},u)d,d\rangle+2\sum_{1\leq i<j\leq k_{1}(\sigma)}\frac{p_{i}+p_{j}}{\sigma_{i}+\sigma_{j}-p_{i}-p_{j}}\left(\frac{y_{ij}-y_{ji}}{2}\right)^{2}\right.\\
&\quad+2\sum_{{1\leq i\leq k_{1}(\sigma)}\atop{k_{1}(\sigma)+1\leq j\leq r}}\left[\frac{p_{i}}{\sigma_{i}-\sigma_{j}-p_{i}}\left(\frac{y_{ij}+y_{ji}}{2}\right)^{2}+\frac{p_{i}}{\sigma_{i}+\sigma_{j}-p_{i}}\left(\frac{y_{ij}-y_{ji}}{2}\right)^{2}\right]\\
&\quad+\left.2\sum_{{1\leq i\leq k_{1}(\sigma)}\atop{r+1\leq j\leq p}}\frac{p_{i}}{\sigma_{i}-p_{i}}\left[\left(\frac{y_{ij}+y_{ji}}{2}\right)^{2}+\left(\frac{y_{ij}-y_{ji}}{2}\right)^{2}\right]+\sum_{i=1}^{k_{1}(\sigma)}\sum_{j=p+1}^{q}\frac{p_{i}}{\sigma_{i}-p_{i}}y_{ij}^{2}\right\}>0\\
&\quad\mbox{ for }\;d\neq 0,\;\nabla F(\bar{x})d\in\left\{D-R[Z_{1}\ Z_{2}]S^{T}\, \left|\,Z_{1}=\Omega\circ D_{1}^{s}+\Gamma\circ D_{1}^{a}-\frac{\sum_{i=1}^{k_{1}(\sigma)}\tilde{d}_{ii}}{k_{1}(\sigma)}\left[\begin{array}{cc}
I_{k_{1}(\sigma)} & 0\\ 0 & 0
\end{array}\right],\right.\right.\\
&\quad\quad\left.Z_{2}=\left[\begin{array}{c}
\Upsilon\, 1^{T}_{q-p} \\0
\end{array}\right]\circ D_{2},\, D\in\mathcal{R}^{p\times q}\right\},
\end{align*}
where $D_{1}^{s}=\frac{D_{1}+D_{1}^{T}}{2}$, $D_{1}^{a}=\frac{D_{1}-D_{1}^{T}}{2}$, $D_{1}=R^{T}DS_{1}=(\tilde{d}_{ij})p\times p$, $D_{2}=R^{T}DS_{2}$, the matrices $\Omega,\Gamma\in\mathcal{R}^{p\times p}$ and vector $\Upsilon\in\mathcal{R}^{k_{1}(\sigma)}$ above are given, respectively, by
\begin{align*}
\Omega&:=\left[\begin{array}{ccc}
1_{k_{1}(\sigma)\times k_{1}(\sigma)} & \Omega_{\alpha_{1}\gamma} & \Omega_{\alpha_{1}\alpha_{4}} \\ \Omega_{\alpha_{1}\gamma}^{T} & 0_{(r-k_{1}(\sigma))\times(r-k_{1}(\sigma))} & 0_{(r-k_{1}(\sigma))\times(p-r)} \\
\Omega_{\alpha_{1}\alpha_{4}}^{T} & 0_{(p-r)\times(r-k_{1}(\sigma))} & 0_{(p-r)\times(p-r)}
\end{array}\right],\ \begin{array}{l}\alpha_{1}:=\{1,\ldots,k_{1}(\sigma)\},\\
\alpha_{2}:=\{k_{1}(\sigma)+1,\ldots,k_{2}(\sigma)\},\\
\alpha_{3}:=\{k_{2}(\sigma,\ldots,r)\},
\end{array}\\
\Gamma&:=\left[\begin{array}{ccc}
\Gamma_{\alpha_{1}\alpha_{1}} & \Gamma_{\alpha_{1}\gamma} & \Gamma_{\alpha_{1}\alpha_{4}}\\
\Gamma_{\alpha_{1}\gamma}^{T} & 0_{(r-k_{1}(\sigma))\times(r-k_{1}(\sigma))} & 0_{(r-k_{1}(\sigma))\times(p-k_{1}(\sigma))}\\
\Gamma_{\alpha_{1}\alpha_{4}}^{T} & 0_{(p-r)\times(r-k_{1}(\sigma))} & 0_{(p-k_{1}(\sigma))\times(p-k_{1}(\sigma))}
\end{array}\right],\begin{array}{l}
\alpha_{4}:=\{r+1,\ldots,p\},\\
\gamma:=\alpha_{2}\cup\alpha_{3},
\end{array}\\
(\Omega_{\alpha_{1}\alpha_{2}})_{kj}&:=\frac{p_{k}}{\sigma_{k}-\sigma_{j+k_{1}(\sigma)}},\quad k\in\alpha_{1},\;j\in\alpha_{2}-k_{1}(\sigma),\\
(\Omega_{\alpha_{1}\alpha_{3}})_{kj}&:=\frac{p_{k}}{\sigma_{k}-\sigma_{j+k_{2}(\sigma)}},\quad k\in\alpha_{1},\;j\in\alpha_{3}-k_{2}(\sigma),\quad
(\Omega_{\alpha_{1}\alpha_{4}})_{kj}:=\frac{p_{k}}{\sigma_{k}},\quad k\in\alpha_{1},\;j\in\alpha_{4}-r,
\end{align*}
\begin{align*}
(\Gamma_{\alpha_{1}\alpha_{1}})_{kj}&:=\frac{p_{k}+p_{j}}{\sigma_{k}+\sigma_{j}},\quad k\in\alpha_{1},\;j\in\alpha_{1},\quad		
(\Gamma_{\alpha_{1}\gamma})_{kj}:=\frac{p_{k}}{\sigma_{k}+\sigma_{j+k_{1}(\sigma)}},\quad k\in\alpha_{1},\;j\in\gamma-k_{1}(\sigma),\\
(\Gamma_{\alpha_{1}\alpha_{4}})_{kj}&:=\frac{p_{k}}{\sigma_{k}},\quad k\in\alpha_{1},\,j\in\alpha_{4}-r,\quad
\Upsilon_{k}:=\frac{p_{k}}{\sigma_{k}},\quad k=1,\ldots,k_{1}(\sigma).
\end{align*}}
\end{example}\vspace*{-0.15in}
	
\section{Characterizations of Second-Order Qualification Condition}\label{sec:characterization-soqc}\vspace*{-0.05in}

The {\em second-order qualification condition} (SOQC) in problems of composite optimization of type \eqref{comp-prob} at $\bar x$ for $\bar u$ was introduced in \cite{Mordukhovich2012} in the form
\begin{align}\label{SOQC}
\operatorname{ker}(\nabla F(\bar{x})^{T})\cap D^{*}(\partial g)(F(\bar{x}),\bar{u})(0)=\{0\},
\end{align}
where $(\bar{x},\bar{u})$ is a solution to the perturbed KKT system \eqref{eq:kkt-perturbed-cop} with $(a,b)=(0,0)$. As shown in \cite[Lemma~7.2]{Mordukhovich2023}, SOQC \eqref{SOQC} reduces to the classical linear independence constraint qualification in the case of smooth problems in nonlinear programming. The main result of this section provides complete characterizations of SOQC for \eqref{comp-prob} when the function $g$ is ${\cal C}^2$-cone reducible.\vspace*{0.03in} 

We start with the following lemma of its independent value establishing a representation for the limiting coderivative of the limiting subdifferential (i.e., the second-order subdifferential) for extended-real-valued parabolically regular functions.\vspace*{-0.03in}  

\begin{lemma}\label{prop:coderivative-equiv-partial-f-partial-normal-epif}
$f:\mathcal{R}^{m}\rightarrow\overline{\mathcal{R}}$ be an l.s.c.\ convex function with $\bar{x}\in\operatorname{dom}\,f$ and $\bar{\mu}\in\partial f(\bar{x})$ under the fulfillment of assumption {\rm(A1)}. Then we have the representation
\begin{equation}\label{cod-rep}
\begin{aligned}
D^{*}(\partial f)(\bar{x},\bar{\mu})(d)&=\big\{y\,\big|\,(y,0)\in D^{*}\mathcal{N}_{\operatorname{epi}f}((\bar{x},f(\bar{x})),(\bar{\mu},-1))(d,\beta)\big\}.
\end{aligned}
\end{equation}
\end{lemma}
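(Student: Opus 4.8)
The plan is to translate both sides of \eqref{cod-rep} into membership statements for limiting normal cones of graphs, and then to connect those two graphs by a smooth change of variables together with the convexity of $f$. By the definition of the coderivative, the left-hand side is $D^{*}(\partial f)(\bar{x},\bar{\mu})(0)=\{d\mid(d,0)\in\mathcal{N}_{\operatorname{gph}\partial f}(\bar{x},\bar{\mu})\}$, while the right-hand side is $\{d\mid((d,0),(0,0))\in\mathcal{N}_{\operatorname{gph}\mathcal{N}_{\operatorname{epi}f}}(z_{0})\}$ with $z_{0}:=((\bar{x},f(\bar{x})),(\bar{\mu},-1))$. So the task is the geometric identity relating the normal cones of $\operatorname{gph}\partial f$ and $\operatorname{gph}\mathcal{N}_{\operatorname{epi}f}$ at these points.

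First I would record the local structure of $\operatorname{gph}\mathcal{N}_{\operatorname{epi}f}$ near $z_{0}$. Since $\bar{\mu}\in\partial f(\bar{x})$ and the dual component $(\bar{\mu},-1)$ has negative last coordinate, every nearby $((x,\alpha),(w,\beta))\in\operatorname{gph}\mathcal{N}_{\operatorname{epi}f}$ has $\beta<0$, which forces $\alpha=f(x)$ and $-w/\beta\in\partial f(x)$ (the horizon regime $\beta=0$ is excluded near $-1$). Hence on a neighborhood of $z_{0}$ the $\mathcal{C}^{1}$-diffeomorphism $\Phi((x,\alpha),(w,\beta)):=((x,\alpha),(-w/\beta,\beta))$ carries $\operatorname{gph}\mathcal{N}_{\operatorname{epi}f}$ onto $\tilde{G}\times(-\infty,0)$, where $\tilde{G}:=\{(x,f(x),v)\mid v\in\partial f(x)\}$ and the free factor corresponds to $\beta$. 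Because limiting normal cones transform under a $\mathcal{C}^{1}$ diffeomorphism by the transpose of its Jacobian, and because $\nabla\Phi(z_{0})$ couples the $w$- and $\beta$-slots through $\bar{\mu}$, a short linear-algebra computation shows that $((d,0),(0,0))\in\mathcal{N}_{\operatorname{gph}\mathcal{N}_{\operatorname{epi}f}}(z_{0})$ if and only if $(d,0,0)\in\mathcal{N}_{\tilde{G}}(\bar{x},f(\bar{x}),\bar{\mu})$, the $\bar{\mu}$-coupling dropping out precisely because the relevant dual components vanish. This converts the right-hand side of the lemma into $\{d\mid(d,0,0)\in\mathcal{N}_{\tilde{G}}\}$.

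The heart of the proof is then to compare $\mathcal{N}_{\tilde{G}}$ with $\mathcal{N}_{\operatorname{gph}\partial f}$, and I expect this to be the delicate step. I would establish the pointwise regular-normal identity: for $(x,v)\in\operatorname{gph}\partial f$ near the reference point,
\[
\widehat{\mathcal{N}}_{\tilde{G}}(x,f(x),v)=\{(\eta-\rho v,\,\rho,\,\zeta)\mid\rho\in\mathcal{R},\ (\eta,\zeta)\in\widehat{\mathcal{N}}_{\operatorname{gph}\partial f}(x,v)\}.
\]
The key device is the two-sided subgradient inequality: for $v,v'\in\partial f$ at nearby points $x,x'$, the defect $E:=\langle v',x-x'\rangle-(f(x)-f(x'))$ obeys $-\|v-v'\|\,\|x-x'\|\le E\le 0$, hence $E=o(\|(x,v)-(x',v')\|)$. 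Consequently the term $\rho E$ produced when one trades the nonsmooth increment $f(x)-f(x')$ for the linear $\langle v',x-x'\rangle$ is absorbed into the little-$o$ regardless of the sign or magnitude of $\rho$. This is exactly the obstacle: along the sequences defining a limiting normal to $\tilde{G}$, the epigraph-value component $\rho^{k}$ of the approximating regular normals need not vanish, and only the second-order smallness of $E$ lets one discard it in the limit.

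Finally I would pass to the limit. Given $(d,0,0)\in\mathcal{N}_{\tilde{G}}$, choose regular normals $(\xi^{k},\rho^{k},\zeta^{k})\to(d,0,0)$ at points of $\tilde{G}$ converging to the reference point; the displayed formula yields $(\xi^{k}+\rho^{k}v^{k},\zeta^{k})\in\widehat{\mathcal{N}}_{\operatorname{gph}\partial f}(x^{k},v^{k})$, and since $\rho^{k}\to0$ while $v^{k}\to\bar{\mu}$ stays bounded, these converge to $(d,0)$, giving $(d,0)\in\mathcal{N}_{\operatorname{gph}\partial f}(\bar{x},\bar{\mu})$; the reverse inclusion is immediate on taking $\rho^{k}\equiv0$. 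Here I use only that $\operatorname{gph}\partial f$ and $\operatorname{gph}\mathcal{N}_{\operatorname{epi}f}$ are closed and that $f(x^{k})\to f(\bar{x})$ along $\operatorname{gph}\partial f$ (all forced by convexity of the l.s.c.\ function $f$); assumption {\rm(A1)}, though available, appears unnecessary for this geometric identity. Combining the three reductions yields \eqref{cod-rep}.
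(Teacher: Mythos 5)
Your proposal is correct, and it takes a genuinely different route from the paper's. You straighten the positive homogeneity in the dual variable via the local diffeomorphism $\Phi((x,\alpha),(w,\beta)):=((x,\alpha),(-w/\beta,\beta))$ on $\{\beta<0\}$, reduce $\operatorname{gph}\mathcal{N}_{\operatorname{epi}f}$ near the reference point to $\tilde{G}\times(-\infty,0)$ with $\tilde{G}=\{(x,f(x),v)\mid v\in\partial f(x)\}$, and then compare regular normals of $\tilde{G}$ and of $\operatorname{gph}\partial f$ directly. I verified the pieces you left implicit: the Jacobian transpose $\nabla\Phi(z_0)^{T}$ sends a product normal $(p_x,p_\alpha,p_v,0)$ to $(p_x,p_\alpha,p_v,\langle\bar{\mu},p_v\rangle)$, so the $\bar{\mu}$-coupling indeed disappears exactly when $p_v=0$, which is the case for the coderivative at the zero argument; the two-sided defect bound $-\|v-v'\|\,\|x-x'\|\le\langle v',x-x'\rangle-(f(x)-f(x'))\le 0$ follows from the two subgradient inequalities and gives the needed $o(\|(x,v)-(x',v')\|)$ smallness; and the comparability of $\|(x',f(x'),v')-(x,f(x),v)\|$ with $\|(x',v')-(x,v)\|$, required to exchange the little-$o$'s, holds because nearby subgradients are bounded. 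The paper argues quite differently: it exploits assumption (A1) through Proposition~\ref{prop-parabolically-regular-delta-epi-f} and companion results to compute the domains of the graphical derivatives of $\partial f$ and of $\mathcal{N}_{\operatorname{epi}f}$ as the critical set $\mathcal{K}(x,\mu)=\{y\mid df(x)(y)=\langle\mu,y\rangle\}$ in \eqref{dom-graphical-deriv-partial-f} and \eqref{dom-graphical-deriv-normal-epif}, and then, via the polarity between regular normals and tangent cones from \cite[Proposition~8.37]{Rockafellar1998}, identifies both sides of \eqref{cod-rep} with the same Painlev\'e--Kuratowski outer limit of vectors almost-polar to $\mathcal{K}(x^{k},\mu^{k})\cap\mathbb{B}_{\mathcal{R}^{m}}$. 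What each approach buys: your argument uses only convexity of $f$, so it establishes the identity for every proper l.s.c.\ convex function and substantiates your closing observation that (A1) is superfluous for this particular lemma---a genuine strengthening; the paper's heavier route yields the tangent-cone domain formulas as byproducts, which are reused later (for instance in the proof of Lemma~\ref{lemma-isolated-calm-SRC-SOSC}), so it integrates more tightly with the parabolic-regularity framework on which the rest of the paper depends.
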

\begin{proof}
It follows from \cite[Theorem~8.9]{Rockafellar1998} that for any $x\in\operatorname{dom}f$, $\mu\in\partial f(x)$, and $\lambda>0$, we get  $\lambda(\mu,-1)\in\partial \delta_{\operatorname{epi}f}(x,f(x))$. Using assumption (A1), \cite[Theorem~3.8]{Mohammadi2020}, \cite[Example~13.62]{Rockafellar1998}, and Proposition~\ref{prop-parabolically-regular-delta-epi-f} above tell us that $\delta_{\operatorname{epi}f}$ is parabolically regular at $(x,f(x))$ for $\lambda(\mu,-1)$ and parabolically epi-differentiable at $(x,f(x))$ for every $(z,\gamma)\in\operatorname{dom}d\delta_{\operatorname{epi}f}(x,f(x))$. Then it follows from Proposition~\ref{prop-parabolically-regular-delta-epi-f}, \cite[Lemma~5.2]{MordukhovichTangWang2025}, \cite[Proposition~3.2]{MordukhovichTangWang-tilt-2025}, and \cite[Theorem~13.40]{Rockafellar1998}  that we have the equalities
\begin{equation*}
D\mathcal{N}_{\operatorname{epi}f}((x,f(x)),(\mu,-1))(y,\gamma)=\frac{1}{2}\partial d^{2}\delta_{\operatorname{epi}f}((x,f(x)),(\mu,-1))(y,\gamma),
\end{equation*}\vspace*{-0.2in}
\begin{equation*}
D(\partial f)(x,\mu)(y)=\frac{1}{2}\partial d^{2}f(x,\mu)(y),
\end{equation*}\vspace*{-0.35in}
\begin{align}\label{dom-graphical-deriv-partial-f}
\operatorname{dom}D(\partial f)(x,\mu)&=\{y\,|\,df(x)(y)=\langle\mu,y\rangle\}:=\mathcal{K}(x,\mu),
\end{align}\vspace*{-0.38in}
\begin{align}\label{dom-graphical-deriv-normal-epif}
\begin{array}{ll}
\operatorname{dom}D\mathcal{N}_{\operatorname{epi}f}((x,f(x)),\lambda(\mu,-1))&=\operatorname{dom}d^{2}\delta_{\operatorname{epi}f}((x,f(x)),\lambda(\mu,-1))\\
&=\left\{(y,\gamma)\,\left|\,y\in\mathcal{K}(x,\mu),\, \gamma=\langle \mu,y\rangle\right.\right\}.
\end{array}
\end{align} 
Combining Proposition~\ref{prop-parabolically-regular-delta-epi-f}, \cite[Proposition~3.41]{Bonnans2000}, the parabolic regularity of $\delta_{\operatorname{epi}f}$ and $f$ yields
\begin{equation}\label{partial-second-order-subderivative-epi-f}
\begin{aligned}
\begin{array}{ll}
&d^{2}\delta_{\operatorname{epi}f}((x,f(x)),(\mu,-1))(y,\gamma)
=\displaystyle\inf_{(w,\beta)\in\mathcal{T}^{2}_{\operatorname{epi}f}((x,f(x)),(y,\gamma))}-\langle(\mu,-1),(w,\beta)\rangle\\
&=\displaystyle\inf_{w\in\mathcal{R}^{m}}\{d^{2}f(x)(y,w)-\langle \mu,w\rangle\}=d^{2}f(x,\mu)(y),\\ 
&\frac{1}{2}\partial d^{2}\delta_{\operatorname{epi}f}((x,f(x)),(\mu,-1))(y,\gamma)=\left\{(v,0)\,\left|\,v\in D(\partial f)(x,\mu)(y)\right.\right\}.
\end{array}
\end{aligned}
\end{equation} 
Pick any $(y,0)\in D^{*}\mathcal{N}_{\operatorname{epi}f}((\bar{x},f(\bar{x})),(\bar{\mu},-1))(d,\beta)$ and deduce from the definitions in \eqref{cod} and \eqref{nc} the existence of $((x^{k},f(x^{k})),\lambda^{k}(\mu^{k},-1))\in\operatorname{gph}\mathcal{N}_{\operatorname{epi}f}$ with $((x^{k},f(x^{k})),\lambda^{k}(\mu^{k},-1))\rightarrow((\bar{x},f(\bar{x})),(\bar{\mu},-1))$, $(d^{k},\beta^{k})\rightarrow(d,\beta)$, and $(y^{k},\gamma^{k})\in\widehat{D}^{*}\mathcal{N}_{\operatorname{epi}f}((x^{k},f(x^{k})),\lambda^{k}(\mu^{k},-1))(d^{k},\beta^{k})$ such that $(y^{k},\gamma^{k})\rightarrow(y,0)$. Applying further \cite[Proposition~8.37]{Rockafellar1998} to any pair $(y^{k},\gamma^{k})\in\widehat{D}^{*}\mathcal{N}_{\operatorname{epi}f}((x^{k},f(x^{k})),\lambda^{k}(\mu^{k},-1))(d^{k},\beta^{k})$ leads us to the estimate
\begin{align*}
&\langle(y^{k},\gamma^{k}),(\tilde{y}^{k},\tilde{\gamma}^{k})\rangle\leq\langle(d^{k},\beta^{k}) ,(\tilde{d}^{k},\tilde{\beta}^{k})\rangle\\
&\qquad\mbox{for all }\ ((\tilde{y}^{k},\tilde{\gamma}^{k}),(\tilde{d}^{k},\tilde{\beta}^{k}))\in \mathcal{T}^{k}:=\mathcal{T}_{\operatorname{gph}\mathcal{N}_{\operatorname{epi}f}}((x^{k},f(x^{k})),\lambda^{k}(\mu^{k},-1)),
\end{align*}
which can be equivalently rewritten (since ${\cal T}^k$ is a cone) as 
\begin{align*}
&\langle(y^{k},\gamma^{k}),(\tilde{y}^{k},\tilde{\gamma}^{k})\rangle\leq\langle(d^{k},\beta^{k}) ,(\tilde{d}^{k},\tilde{\beta}^{k})\rangle\;\mbox{ for all }\;((\tilde{y}^{k},\tilde{\gamma}^{k}),(\tilde{d}^{k},\tilde{\beta}^{k}))\in \mathcal{T}^{k}\cap\mathbb{B}_{\mathcal{R}^{m+1}\times\mathcal{R}^{m+1}}.
\end{align*}
It follows from \eqref{dom-graphical-deriv-normal-epif} that $\tilde{y}^{k}\in\mathcal{K}(x^{k},\mu^{k})\cap\mathbb{B}_{\mathcal{R}^{m}}$, and therefore by 
\eqref{dom-graphical-deriv-partial-f} and \eqref{partial-second-order-subderivative-epi-f}
\begin{align*}
&\big\{y\,\big|\,y,0)\in D^{*}\mathcal{N}_{\operatorname{epi}f}((\bar{x},f(\bar{x})),(\bar{\mu},-1))(d,\beta)\big\}\\
&=\Limsup_{k\rightarrow\infty}\left\{y^{k}\,\left|\,\langle(y^{k},\gamma^{k}),(\tilde{y}^{k},\tilde{\gamma}^{k})\rangle\leq\langle(d^{k},\beta^{k}) ,(\tilde{d}^{k},\tilde{\beta}^{k})\rangle\right.\right.\;\mbox{ whenever}\\
&\qquad\left.((\tilde{y}^{k},\tilde{\gamma}^{k}),(\tilde{d}^{k},\tilde{\beta}^{k}))\in \mathcal{T}^{k}\cap\mathbb{B}_{\mathcal{R}^{m+1}\times\mathcal{R}^{m+1}},\,\gamma^{k}\rightarrow0,\,(d^{k},\beta^{k})\rightarrow(d,\beta)\right\}\\
&=\Limsup_{k\rightarrow\infty}\left\{y^{k}\,\left|\,\langle y^{k},\tilde{y}^{k}\rangle-\langle d^{k},\tilde{d}^{k}\rangle\leq \varepsilon^{k}\;\mbox{ when }\;(\tilde{y}^{k},\tilde{d}^{k})\in\mathcal{T}_{\operatorname{gph}\partial f}(x^{k},\mu^{k})\cap\mathbb{B}_{\mathcal{R}^{m}},\,\varepsilon^{k}\rightarrow0,\,d^{k}\rightarrow d\right.\right\}\\
&=\Limsup_{k\rightarrow\infty}\widehat{D}^{*}(\partial f)(x^{k},\mu^{k})(d^{k})\\
&=D^{*}(\partial f)(\bar{x},\bar{\mu})(d),
\end{align*}
where ``$\Limsup$" signifies the limits of convergent subsequences (Painlev\'e-Kuratowski outer limit); cf.\ \eqref{nc} and \eqref{cod}. 
This verifies  \eqref{cod-rep} and thus completes the proof of the lemma.
\end{proof}\vspace*{-0.05in}

The next lemma elucidates interconnections between SOQC \eqref{SOQC}, the constraint nondegeneracy \eqref{nondegeneracy-cond} for the original problem 
\eqref{comp-prob}, and their counterparts in the auxiliary problem 
\begin{align}\label{eq:problem-equivalent}
\min_{(x,c)\in\mathcal{R}^{n}\times\mathcal{R}}\big\{h(x)+c\,\big|\,(F(x),c)\in\operatorname{epi}g\big\}.
\end{align}

\begin{lemma}\label{lemma:equivalent-second-order-QC}
Let $(\bar{x},\bar{u})$ be a solution to the KKT system \eqref{eq:kkt-perturbed-cop} with $(a,b)=(0,0)$. Then we have the following equivalency assertions:
	
{\bf(i)} The constraint nondegeneracy condition for \eqref{eq:problem-equivalent} is equivalent to that for problem \eqref{comp-prob}.

{\bf(ii)} If $g$ satisfies assumption {\rm(A1)} at $\bar{x}$, then SOQC \eqref{SOQC} for the original problem \eqref{comp-prob} is equivalent to the SOQC version for the auxiliary problem \eqref{eq:problem-equivalent} written as
\begin{align}\label{SOQC-equiv-prob}
\operatorname{ker}\left[\begin{array}{cc}
\nabla F(\bar{x})^{T} & 0\\
0 & 1
\end{array}\right]\cap D^{*}\mathcal{N}_{\operatorname{epi}g}((F(\bar{x}),g(F(\bar{x}))),(\bar{u},-1))(0,0)=\{0\}.
\end{align}
\end{lemma}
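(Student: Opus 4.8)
The plan is to treat the two parts separately; in each case I would write out the relevant qualification condition for the auxiliary problem \eqref{eq:problem-equivalent} explicitly and then collapse it to the corresponding condition for \eqref{comp-prob} by exploiting the product structure of the linearized constraint mapping $(x,c)\mapsto(\nabla F(\bar{x})x,c)$, whose range carries a free last coordinate.

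For part (i), I would first record that \eqref{eq:problem-equivalent} is a composite problem with the smooth mapping $\tilde F(x,c):=(F(x),c)$, the convex function $\delta_{\operatorname{epi}g}$, and KKT multiplier $(\bar{u},-1)$, the latter coming from $\bar{u}\in\partial g(F(\bar{x}))$ together with $\nabla_{x}L(\bar{x},\bar{u})=0$. Instantiating the nondegeneracy condition \eqref{nondegeneracy-cond} for \eqref{eq:problem-equivalent}, its linearized range is $(\nabla F(\bar{x})\mathcal{R}^{n})\times\mathcal{R}$, while its critical set is $\{(d,\gamma)\,|\,d\delta_{\operatorname{epi}g}((F(\bar{x}),g(F(\bar{x}))))(d,\gamma)=\langle(\bar{u},-1),(d,\gamma)\rangle\}$. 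Using the epigraphical tangent cone $\mathcal{T}_{\operatorname{epi}g}=\operatorname{epi}(dg(F(\bar{x})))$ and $\bar{u}\in\partial g(F(\bar{x}))$, this set equals $\{(d,\langle\bar{u},d\rangle)\,|\,d\in K\}$ with $K:=\{d\,|\,dg(F(\bar{x}))(d)=\langle\bar{u},d\rangle\}$, which is precisely the domain computed in Proposition~\ref{prop-parabolically-regular-delta-epi-f}; its lineality space is $\{(d,\langle\bar{u},d\rangle)\,|\,d\in\operatorname{lin}K\}$. The nondegeneracy for \eqref{eq:problem-equivalent} then reads $(\nabla F(\bar{x})\mathcal{R}^{n})\times\mathcal{R}+\{(d,\langle\bar{u},d\rangle)\,|\,d\in\operatorname{lin}K\}=\mathcal{R}^{m+1}$. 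Since the first summand already contains $\{0\}\times\mathcal{R}$, this subspace fills $\mathcal{R}^{m+1}$ if and only if its projection onto the first $m$ coordinates, namely $\nabla F(\bar{x})\mathcal{R}^{n}+\operatorname{lin}K$, equals $\mathcal{R}^{m}$, i.e.\ exactly the condition \eqref{nondegeneracy-cond} for \eqref{comp-prob}. Note that this argument uses only the tangent cone and no parabolic regularity, consistent with the absence of (A1) in the hypothesis of (i).

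For part (ii) the work is shorter, since the genuinely hard coderivative identity has already been isolated in Lemma~\ref{prop:coderivative-equiv-partial-f-partial-normal-epif}. I would first observe that the kernel of the block matrix in \eqref{SOQC-equiv-prob} equals $\operatorname{ker}(\nabla F(\bar{x})^{T})\times\{0\}$, so that any $(y,s)$ lying in the intersection defining \eqref{SOQC-equiv-prob} automatically satisfies $s=0$ and $\nabla F(\bar{x})^{T}y=0$, leaving only the requirement $(y,0)\in D^{*}\mathcal{N}_{\operatorname{epi}g}((F(\bar{x}),g(F(\bar{x}))),(\bar{u},-1))(0,0)$. Applying Lemma~\ref{prop:coderivative-equiv-partial-f-partial-normal-epif} to $g$ at $F(\bar{x})$ for $\bar{u}$ (valid under (A1)), this membership is equivalent to $y\in D^{*}(\partial g)(F(\bar{x}),\bar{u})(0)$. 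Hence the intersection in \eqref{SOQC-equiv-prob} is trivial if and only if $\operatorname{ker}(\nabla F(\bar{x})^{T})\cap D^{*}(\partial g)(F(\bar{x}),\bar{u})(0)=\{0\}$, which is exactly SOQC \eqref{SOQC}.

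The conceptual content of (ii) is entirely absorbed into Lemma~\ref{prop:coderivative-equiv-partial-f-partial-normal-epif}, so once that lemma is invoked the argument is pure kernel bookkeeping. The only delicate point lies in (i): correctly identifying the critical set of the epigraphical problem and, above all, verifying that the extra free coordinate $c$ decouples so that nondegeneracy in dimension $m+1$ reduces to the $m$-dimensional condition. I expect the cleanest route is the projection/free-coordinate argument sketched above rather than a direct dimension count.
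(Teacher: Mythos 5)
Your proposal is correct and takes essentially the same route as the paper: in part (i) your critical-set computation produces exactly the lineality space $\{(d,d_{c})\,|\,dg(F(\bar{x}))(d)=-dg(F(\bar{x}))(-d)=d_{c}\}$ that the paper obtains for $\operatorname{lin}\mathcal{T}_{\operatorname{epi}g}(F(\bar{x}),g(F(\bar{x})))$ via \cite[Corollary~4.7.2]{Rockafellar1970}, after which the free last coordinate collapses the $(m+1)$-dimensional condition to \eqref{nondegeneracy-cond}. Part (ii) matches the paper's argument verbatim: the kernel of the block matrix is $\operatorname{ker}(\nabla F(\bar{x})^{T})\times\{0\}$, and Lemma~\ref{prop:coderivative-equiv-partial-f-partial-normal-epif} converts the membership $(d,0)\in D^{*}\mathcal{N}_{\operatorname{epi}g}((F(\bar{x}),g(F(\bar{x}))),(\bar{u},-1))(0,0)$ into $d\in D^{*}(\partial g)(F(\bar{x}),\bar{u})(0)$.
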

\begin{proof}
The constraint nondegeneracy condition for \eqref{eq:problem-equivalent} at $(\bar{x},g(F(\bar{x})))$ is
\begin{align}\label{eq:nondegenerate-cond-equivalent-problem}
\left[\begin{array}{cc}
\nabla F(\bar{x}) & 0\\
0 & 1
\end{array}\right]\mathcal{R}^{n+1}+\operatorname{lin}\mathcal{T}_{\operatorname{epi}g}(F(\bar{x}),g(F(\bar{x})))=\mathcal{R}^{m}\times\mathcal{R},
\end{align}
where the tangent cone to the epigraph is given by
\begin{align*}
\mathcal{T}_{\operatorname{epi}g}(F(\bar{x}),g(F(\bar{x})))=\left\{(d,d_{c})\,\left|\,dg(F(\bar{x}))(d)\leq d_{c}\right.\right\}.
\end{align*}
Applying \cite[Corollary~4.7.2]{Rockafellar1970} tells us that
\begin{align*}
\operatorname{lin}\mathcal{T}_{\operatorname{epi}g}(F(\bar{x}),g(F(\bar{x})))&=\left\{(d,d_{c})\,\left|\,dg(F(\bar{x}))(d)\leq d_{c}\leq-dg(F(\bar{x}))(-d)\right.\right\}\\
&=\left\{(d,d_{c})\,\left|\,dg(F(\bar{x}))(d)=-dg(F(\bar{x}))(-d)=d_{c}\right.\right\},
\end{align*}
and therefore the constraint nondegeneracy condition \eqref{eq:nondegenerate-cond-equivalent-problem} is equivalent to \eqref{nondegeneracy-cond}.
	
To verify the second assertion, observe that $(\bar{x},g(F(\bar{x})))$ solves the unconstrained problem \eqref{eq:problem-equivalent} with $(\bar{u},-1)$ being the corresponding Lagrange multiplier. Since
\begin{align*}
\operatorname{ker}\left[\begin{array}{cc}
\nabla F(\bar{x})^{T} & 0\\
0 & 1
\end{array}\right]=\left\{(d,0)\,\left|\,d\in\operatorname{ker}(\nabla F(\bar{x})^{T})\right.\right\},
\end{align*}
we represent the set intersection in \eqref{SOQC-equiv-prob} as follows:
\begin{align*}
&\operatorname{ker}\left[\begin{array}{cc}
\nabla F(\bar{x})^{T} & 0\\
0 & 1
\end{array}\right]\cap D^{*}\mathcal{N}_{\operatorname{epi}g}((F(\bar{x}),g(F(\bar{x}))),(\bar{u},-1))(0,0)\\
&=\left\{(d,0)\,\left|\,(d,0)\in D^{*}\mathcal{N}_{\operatorname{epi}g}((F(\bar{x}),g(F(\bar{x}))),(\bar{u},-1))(0,0),\,d\in\operatorname{ker}(\nabla F(\bar{x})^{T})\right.\right\}.
\end{align*}
Combining the latter with Lemma~\ref{prop:coderivative-equiv-partial-f-partial-normal-epif} justifies (ii) and thus completes the proof.
\end{proof}

Based on the above lemmas, we now establish relationships between the second-order qualification condition and constraint nondegeneracy.\vspace*{-0.03in}

\begin{theorem}\label{theorem-SOQC-nondegenerate-cond}
Let $(\bar{x},\bar{u})$ solve the KKT system \eqref{eq:kkt-perturbed-cop} with $(a,b)=(0,0)$. If the constraint nondegeneracy condition \eqref{nondegeneracy-cond} holds at $\bar{x}$ for $\bar{u}$ and in addition $g$ is $\mathcal{C}^{2}$-cone reducible, then SOQC \eqref{SOQC} is satisfied at $\bar{x}$ for $\bar{u}$.
\end{theorem}\vspace*{-0.15in}
\begin{proof}
This follows from  Lemma~\ref{lemma:equivalent-second-order-QC} combined with \cite[Corollary~3.8]{MordukhovichSarabi2015}.
\end{proof}\vspace*{-0.25in}

\section{Characterizations of Lipschitzian Stability for KKT Systems}\label{sec:equivalence-aubin-property}\vspace*{-0.05in} 

This section contains the main results of the paper establishing close relationships and comprehensive characterizations for various Lipschitzian properties of perturbed KKT systems associated with problems of composite optimization. Our primary attention concerns the Lipschitz-like/Aubin property of the KKT solution mapping $S_{\operatorname{KKT}}$ for system \eqref{eq:kkt-perturbed-cop}.\vspace*{0.03in}

We begin with the foundational lemma that bridges the concept of isolated calmness with the combination of SRCQ and SOSC. This lemma serves as a crucial building block for our subsequent analysis of the Aubin property for $S_{\operatorname{KKT}}$.\vspace*{-0.05in}

\begin{lemma}\label{lemma-isolated-calm-SRC-SOSC}
Let $x$ be a local minimizer of problem \eqref{comp-prob-perturb} with parameters {$(a,0)$} and $u\in\Lambda(x,a,0)$ be a corresponding Lagrange multiplier. Suppose that $g:\mathcal{R}^m\to\overline{\mathcal{R}}$ satisfies  assumption {\rm(A1)}  at $F(x)$. Then the mapping $S_{\operatorname{KKT}}$ has the isolated calmness property at {$(a,0)$} for $(x,u)$ if and only if both SRCQ \eqref{SRCQ} and the following version
\begin{equation}\label{SOSC-equiv-single-dual}
\left\langle\nabla_{xx}^{2}L(x,u)d,d\right\rangle+d^{2}g(F(x),u)(\nabla F(x)d)>0\;\mbox{ for all }\;d\in\mathcal{C}(x)\setminus\{0\}
\end{equation} 
of the SOSC property hold, where ${\cal C}(x)$ is the critical cone \eqref{crit-cone} at $x$,
\end{lemma}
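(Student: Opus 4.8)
The plan is to reduce the isolated calmness of $S_{\operatorname{KKT}}$ to a graphical derivative condition and then decode that condition into a primal part (SOSC) and a dual part (SRCQ). Since $(x,u)\in S_{\operatorname{KKT}}(a,b)\iff(a,b)\in\Phi(x,u)$, the solution map is the inverse $S_{\operatorname{KKT}}=\Phi^{-1}$, so by the Levy-Rockafellar criterion (Lemma~\ref{lemma-isolated-calmness-graphical-derivative}) together with the inverse rule \eqref{inverse-graphical-derivative}, isolated calmness at $(a,0)$ for $(x,u)$ is equivalent to the implication $(0,0)\in D\Phi((x,u),(a,0))(\Delta x,\Delta u)\Rightarrow(\Delta x,\Delta u)=(0,0)$. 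First I would compute $D\Phi$ explicitly: the first block $\nabla_x L$ is $\mathcal{C}^1$, so the smooth sum rule \eqref{sum-rule-graphical-derivative} yields $\nabla_{xx}^2 L(x,u)\Delta x+\nabla F(x)^T\Delta u$; for the second block, whose base point in $\operatorname{gph}\partial g^*$ is $(u,F(x))$ because $b=0$, I would use $\partial g^*=(\partial g)^{-1}$ together with \eqref{inverse-graphical-derivative} and \eqref{sum-rule-graphical-derivative}. This turns the kernel condition into the statement that the only solution of
\[
\nabla_{xx}^2 L(x,u)\Delta x+\nabla F(x)^T\Delta u=0,\qquad \Delta u\in D(\partial g)(F(x),u)(\nabla F(x)\Delta x)
\]
is $(\Delta x,\Delta u)=(0,0)$.

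Next I would record two facts about this system. The membership $\Delta u\in D(\partial g)(F(x),u)(\nabla F(x)\Delta x)$ forces $\nabla F(x)\Delta x\in\operatorname{dom}D(\partial g)(F(x),u)=\{y\,|\,dg(F(x))(y)=\langle u,y\rangle\}$ (a consequence of assumption (A1)), i.e. $\Delta x\in\mathcal{C}(x)$ by \eqref{crit-cone}; and by convexity of $g$ every such $\Delta u$ satisfies $\langle\Delta u,\nabla F(x)\Delta x\rangle=d^2 g(F(x),u)(\nabla F(x)\Delta x)$ (the relation used in the proof of Lemma~\ref{prop-second-subderiv-sub-conv} via \cite[Proposition~3.4]{Tangwang2024}). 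Moreover, writing the critical cone of $g$ as $K:=\operatorname{dom}D(\partial g)(F(x),u)$, one has $D(\partial g)(F(x),u)(0)=\partial\big(\tfrac12 d^2 g(F(x),u)\big)(0)=K^{\circ}$, so that SRCQ \eqref{SRCQ}, namely $\nabla F(x)\R^n+K=\R^m$, is equivalent by polarity to $\operatorname{ker}(\nabla F(x)^T)\cap D(\partial g)(F(x),u)(0)=\{0\}$.

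With these in hand the sufficiency direction is straightforward: for any solution $(\Delta x,\Delta u)$ of the system, pairing the first equation with $\Delta x$ and using the two facts gives $\langle\nabla_{xx}^2 L(x,u)\Delta x,\Delta x\rangle+d^2 g(F(x),u)(\nabla F(x)\Delta x)=0$ with $\Delta x\in\mathcal{C}(x)$; SOSC \eqref{SOSC-equiv-single-dual} forces $\Delta x=0$, whence $\nabla F(x)^T\Delta u=0$ and $\Delta u\in D(\partial g)(F(x),u)(0)=K^{\circ}$, and SRCQ forces $\Delta u=0$. For the necessity direction I would argue by contraposition: if SRCQ fails, the polarity equivalence yields $0\ne\Delta u\in\operatorname{ker}(\nabla F(x)^T)\cap D(\partial g)(F(x),u)(0)$, and $(0,\Delta u)$ solves the system nontrivially; if SRCQ holds but SOSC fails, I would produce a nonzero $\Delta x\in\mathcal{C}(x)$ at which the second-order form vanishes and, using that this form is positively homogeneous of degree two together with the Euler relation for the subdifferential of its convex part $\tfrac12 d^2 g(F(x),u)(\nabla F(x)\cdot)$ (whose chain rule is available under SRCQ), promote it to a stationary pair $(\Delta x,\Delta u)$ of the system.

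The main obstacle is precisely this last construction. Producing a nonzero critical direction at which the second-order form is \emph{exactly} zero uses that the form is nonnegative on $\mathcal{C}(x)$ (the second-order necessary condition at the reference minimizer, noting that $d^2 g\ge 0$ for convex $g$), so that failure of the strict inequality in \eqref{SOSC-equiv-single-dual} means the infimum of the form over the unit sphere in $\mathcal{C}(x)$ is attained with value zero; passing from this minimizer to a genuine solution $\Delta u\in D(\partial g)(F(x),u)(\nabla F(x)\Delta x)$ of the inclusion $\nabla_{xx}^2 L(x,u)\Delta x+\nabla F(x)^T\Delta u=0$ is where the homogeneity/Euler identity and the SRCQ chain rule do the real work, and I would treat it most carefully.
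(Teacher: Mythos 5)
Your route is genuinely different from the paper's: the paper lifts \eqref{eq:kkt-perturbed-cop} to the KKT system of the epigraphically constrained problem \eqref{eq:problem-equivalent}, transfers parabolic regularity to $\delta_{\operatorname{epi}g}$ via Proposition~\ref{prop-parabolically-regular-delta-epi-f}, matches the SOSC/SRCQ of that problem with \eqref{SOSC-equiv-single-dual} and \eqref{SRCQ}, and then invokes \cite[Theorem~4.1]{MohammadiMordukhovichSarabi2020} wholesale, whereas you decode the Levy--Rockafellar kernel condition for $\Phi$ directly. Your sufficiency half is complete and correct: the formula for $D\Phi$, the identification $\operatorname{dom}D(\partial g)(F(x),u)=\{y\,|\,dg(F(x))(y)=\langle u,y\rangle\}$ under (A1), the identity $\langle\Delta u,\nabla F(x)\Delta x\rangle=d^{2}g(F(x),u)(\nabla F(x)\Delta x)$ from \cite[Proposition~3.4]{Tangwang2024}, and the polar reformulation of SRCQ as $\operatorname{ker}(\nabla F(x)^{T})\cap D(\partial g)(F(x),u)(0)=\{0\}$ are all legitimate, so SRCQ together with \eqref{SOSC-equiv-single-dual} forces the kernel to be trivial and isolated calmness follows; this is arguably more self-contained than the paper's reduction.

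The genuine gap sits in the necessity direction, exactly where you hedged. Your construction of a nonzero kernel element from failure of \eqref{SOSC-equiv-single-dual} needs the form $q(d):=\langle\nabla^{2}_{xx}L(x,u)d,d\rangle+d^{2}g(F(x),u)(\nabla F(x)d)$ to be nonnegative on $\mathcal{C}(x)$, i.e., the second-order \emph{necessary} condition. But the lemma assumes only that $(x,u)$ solves the KKT system, not that $x$ locally minimizes the (tilted) problem, and without minimality the implication you are trying to prove is false, not merely unproved: take $n=m=1$, $h(x)=-x^{2}/2$, $F(x)=x$, $g=\delta_{(-\infty,0]}$, $a=0$, $(x,u)=(0,0)$. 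Here (A1) and SRCQ hold, and a direct computation (or your own kernel condition, since $D(\partial g)(0,0)(\Delta x)$ is $\{0\}$ for $\Delta x<0$ and $\mathcal{R}_{+}$ for $\Delta x=0$) shows that $S_{\operatorname{KKT}}$ has the isolated calmness property at the origin for $(0,0)$, yet $q(d)=-d^{2}<0$ on $\mathcal{C}(x)=(-\infty,0]$, so \eqref{SOSC-equiv-single-dual} fails. Isolated calmness at a mere KKT point yields only a noncriticality-type condition, not SOSC; the paper's proof silently absorbs this by citing \cite[Theorem~4.1]{MohammadiMordukhovichSarabi2020}, whose framework supplies local optimality. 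Once local minimality of $x$ is added (and SRCQ, available in your contraposition branch, makes the multiplier unique, so the necessary condition does hold with the fixed $u$), your promotion step closes as you sketched: $q$ is l.s.c., positively homogeneous of degree two, and $+\infty$ off $\mathcal{C}(x)$, so the zero infimum over the unit sphere is attained at some $\bar{d}\neq0$, which is then a global minimizer of (smooth quadratic) plus (convex $d^{2}g(F(x),u)\circ\nabla F(x)$); Fermat's rule, the convex chain rule (valid since SRCQ gives $\nabla F(x)\mathcal{R}^{n}-\operatorname{dom}d^{2}g(F(x),u)=\mathcal{R}^{m}$), and $\partial\bigl(\tfrac{1}{2}d^{2}g(F(x),u)\bigr)(w)=D(\partial g)(F(x),u)(w)$ under (A1) produce $\Delta u\in D(\partial g)(F(x),u)(\nabla F(x)\bar{d})$ with $\nabla^{2}_{xx}L(x,u)\bar{d}+\nabla F(x)^{T}\Delta u=0$. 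So either add the minimality hypothesis explicitly or verify it is forced in the intended application; as written, the necessity half of your proof cannot be completed from the stated assumptions.
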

\begin{proof}
Fix $a\in\mathcal{R}^{n}$ and consider the modified KKT mapping
\begin{equation*}
\tilde{\Phi}((x,c),(u,\gamma)):=\left[\begin{array}{c}
\nabla_{x}L(x,u)-a\\
1+\gamma\\
-(F(x),c)
\end{array}\right]+\left[\begin{array}{c}
0\\0\\
\mathcal{N}^{-1}_{\operatorname{epi}g}(u,\gamma)
\end{array}\right]
\end{equation*} 
together with the corresponding solution mapping 
\begin{equation*}
\tilde{S}_{\operatorname{KKT}}((\tilde{a},\tilde{a}'),(\tilde{b},\tilde{b}')):=\big\{((x,c),(u,\gamma))\,\big|\,((\tilde{a},\tilde{a}'),(\tilde{b},\tilde{b}'))\in\tilde{\Phi}((x,c),(u,\gamma))\big\}.
\end{equation*}
It follows from \cite[Theorem~8.9]{Rockafellar1998} that $\tilde{S}_{\operatorname{KKT}}((0,0),(0,0))=\{((x,g(F(x))),(u,-1))\,|\,(x,u)\in S_{\operatorname{KKT}}(a,0)\}$, and hence $\tilde{S}_{\operatorname{KKT}}((0,0),(0,0))$ can be seen as the set of solutions to the KKT system associated with the canonically perturbed constrained optimization problem given by
\begin{equation}\label{canonical-perturbed-constrained-prob}
\min_{(x,c)\in\mathcal{R}^{n}\times\mathcal{R}}\left\{h(x)+c-\langle a,x\rangle\,\left|\,(F(x),c)\in\operatorname{epi}g\right.\right\}.
\end{equation}
The graphical derivative of $\tilde{\Phi}$ at $((x,c),(u,-1))$ for $((0,0),(0,0))$ is calculated by
\begin{equation*}
\begin{aligned}
&D\tilde{\Phi}(((x,c),(u,-1)),((0,0),(0,0)))((\Delta\xi,\Delta\xi'),(\Delta\eta,\Delta\eta'))\\
&=\left[\begin{array}{c}
\nabla_{xx}^{2}L(x,u)\Delta\xi+\nabla F(x)^{T}\Delta\eta\\
\Delta\eta'\\
-(\nabla F(x)\Delta\xi,\Delta\xi')+D\mathcal{N}^{-1}_{\operatorname{epi}g}((u,-1),(F(x),c))(\Delta\eta,\Delta\eta')
\end{array}\right].
\end{aligned}
\end{equation*} 
Proposition~\ref{prop-parabolically-regular-delta-epi-f}, \cite[Lemma~5.2]{MordukhovichTangWang2025}, \cite[Proposition~3.2]{MordukhovichTangWang-tilt-2025}, and \cite[Theorem~13.40]{Rockafellar1998} ensure that
\begin{equation*}
\begin{aligned}
D\mathcal{N}_{\operatorname{epi}g}((F(x),c),(u,-1))(\nabla F(x)\Delta\xi,\Delta\xi')&=\frac{1}{2}\partial d^{2}\delta_{\operatorname{epi}g}((F(x),c),(u,-1))(\nabla F(x)\Delta\xi,\Delta\xi'),\\
D(\partial g)(F(x),u)(\nabla F(x)\Delta\xi)&=\frac{1}{2}\partial d^{2}g(F(x),u)(\nabla F(x)\Delta\xi),\\
\operatorname{dom}d^{2}g(F(x),u)&=\{d\,|\,dg(F(x))(d)=\langle u,d\rangle\},\\
\operatorname{dom}d^{2}\delta_{\operatorname{epi}g}((F(x),c),(u,-1))&=\{(d,\gamma)\,|\,dg(F(x))(d)=\langle u,d\rangle=\gamma\},
\end{aligned}
\end{equation*}
where the last equality follows by applying \eqref{dom-graphical-deriv-normal-epif} with $\lambda=1$.
By \eqref{inverse-graphical-derivative}, the inclusion $((0,0),(0,0))\in D\tilde{\Phi}(((x,c),(u,-1)),((0,0),(0,0)))((\Delta\xi,\Delta\xi'),(\Delta\eta,\Delta\eta'))$ can be rewritten as
\begin{equation*}
\left[\begin{array}{c}
0\\
0\\
0\\
\end{array}\right]\in\left[\begin{array}{c}\nabla_{xx}^{2}L(x,u)\Delta\xi+\nabla F(x)^{T}\Delta\eta\\
\Delta\eta'\\
-(\Delta\eta,\Delta\eta')
\end{array}\right]+\left[\begin{array}{c}
0\\
0\\
\frac{1}{2}\partial d^{2}\delta_{\operatorname{epi}g}((F(x),c),(u,-1))(\nabla F(x)\Delta\xi,\Delta\xi')
\end{array}\right].
\end{equation*}
Combining Proposition~\ref{prop-parabolically-regular-delta-epi-f}, \cite[Proposition~3.41]{Bonnans2000}, the parabolic regularity of $\delta_{\operatorname{epi}g}$ and $g$ yields
\begin{equation*}
\begin{aligned}
\begin{array}{ll}
&d^{2}\delta_{\operatorname{epi}g}((F(x),c),(u,-1))(\nabla F(x)\Delta\xi,\Delta\xi')
=\displaystyle\inf_{(w,\beta)\in\mathcal{T}^{2}_{\operatorname{epi}g}((F(x),c),(\nabla F(x)\Delta\xi,\Delta\xi'))}-\langle(u,-1),(w,\beta)\rangle\\
&=\displaystyle\inf_{w\in\mathcal{R}^{m}}\{d^{2}g(F(x))(\nabla F(x)\Delta\xi,w)-\langle u,w\rangle\}=d^{2}g(F(x),u)(\nabla F(x)\Delta\xi),\\ 
&\frac{1}{2}\partial d^{2}\delta_{\operatorname{epi}g}((F(x),c),(u,-1))(\nabla F(x)\Delta\xi,\Delta\xi')=\left\{(v,0)\,\left|\,v\in D(\partial g)(F(x),u)(\nabla F(x)\Delta\xi)\right.\right\}.
\end{array}
\end{aligned}
\end{equation*} 
Therefore, the inclusion $((0,0),(0,0))\in D\tilde{\Phi}(((x,c),(u,-1)),((0,0),(0,0)))((\Delta\xi,\Delta\xi'),(\Delta\eta,\Delta\eta'))$ can be rewritten in the equivalent form
\begin{equation*}
\begin{aligned}
\left[\begin{array}{c}
0\\0
\end{array}\right]&\in\left[\begin{array}{c}
\nabla_{xx}^{2}L(x,u)\Delta\xi+\nabla F(x)^{T}\Delta\eta\\
-\Delta\eta+D(\partial g)(F(x),u)(\nabla F(x)\Delta\xi)
\end{array}\right]\Longleftrightarrow \left[\begin{array}{c}
0\\0
\end{array}\right]\in\left[\begin{array}{c}
\nabla_{xx}^{2}L(x,u)\Delta\xi+\nabla F(x)^{T}\Delta\eta\\
-\nabla F(x)\Delta\xi+D(\partial g^{*})(u,F(x))(\Delta\eta)
\end{array}\right].
\end{aligned}
\end{equation*}
It follows from Lemma~\ref{lemma-isolated-calmness-graphical-derivative} that the solution mapping $S_{\operatorname{KKT}}$ enjoys the isolated calmness property at $(a,0)$ for $(x,u)$ if and only if the modified KKT mapping $\tilde{S}_{\operatorname{KKT}}$ has this property at $((0,0),(0,0))$ for $((x,c),(u,-1))$.  By the above expression of $d^{2}\delta_{\operatorname{epi}g}$, the original SOSC \eqref{SOSC} for \eqref{canonical-perturbed-constrained-prob} is equivalent to the SOSC version in \eqref{SOSC-equiv-single-dual}. The equivalence between SRCQ for \eqref{canonical-perturbed-constrained-prob} and the original SRCQ \eqref{SRCQ} is trivial. Due to the fact that $(x,g(F(x)))$ is also the unique local minimizer of problem \eqref{canonical-perturbed-constrained-prob}, the proof is completed by applying \cite[Theorem~4.1]{MohammadiMordukhovichSarabi2020}.
\end{proof}\vspace*{-0.05in}

Recall yet another Lipschitzian stability property for local minimizers of extended-real-valued functions $\varphi\colon\mathcal{R}^n\rightarrow\overline{\mathcal{R}}$ in \eqref{comp-prob}. Following \cite{Poliquin1998}, we say that $\bar x\in{\rm dom}\,\varphi$ is a {\em tilt-stable} local minimizer of $\varphi$ with modulus $\kappa>0$ if there exists a number $\gamma>0$ such that the mapping
\begin{align*}
M_{\gamma}:u\mapsto\mathop{\operatorname{arg}\min}\left\{\varphi(x)-\langle u,x\rangle\,\left|\,x\in\mathbb{B}(\bar{x},\gamma)\right.\right\}
\end{align*}
is single-valued and Lipschitz continuous with modulus $\kappa$ on some neighborhood of the origin with $M_{\gamma}(0)=\{\bar{x}\}$. Over the years, this property has been largely studied and applied in variational analysis and optimization; see, e.g., the recent book \cite{Mordukhovich2024} for major achievements in this direction. The following observation tells us that the tilt stability of a local minimizer $\bar x$ for a parabolically regular function $\varphi$ yields the {\em second-order subdifferential condition} \eqref{second-order-subdifferential-condition-single-dual} when the corresponding multiplier sets around $\bar{x}$ are singletons.\vspace*{-0.05in}

\begin{lemma}\label{lemma-tilt-stability-second-order-subdifferential-cond}
Let $\bar{x}$ be a local minimizer of problem \eqref{comp-prob} with $\bar{u}\in\Lambda(\bar{x},0,0)$ being a corresponding Lagrange multiplier.  Suppose that RCQ \eqref{RCQ} holds at $\bar{x}$, that $g$ satisfies {\rm (A1)} at $F(\bar{x})$, and that there exist neighborhoods $\mathcal{U}$ of $\bar{x}$ and $\mathcal{V}$ of the origin such that  $\Lambda(x,a,0)$ is a singleton for $x\in\mathcal{U}$, $a\in\mathcal{V}$, and $(x,u)\in S_{\operatorname{KKT}}(a,0)$. If $\bar{x}$ is a tilt-stable local minimizer of $\varphi$ with modulus $\kappa>0$, then we have the second-order condition
\begin{align}\label{second-order-subdifferential-condition-single-dual}
\left\langle\nabla_{xx}^{2}L(\bar{x},\bar{u})d,d\right\rangle+\min_{z\in D^{*}(\partial g)(F(\bar{x}),\bar{u})(\nabla F(\bar{x})d)}\left\langle\nabla F(\bar{x})d,z\right\rangle\geq\frac{1}{\kappa}\|d\|^{2}\;\mbox{ for all }\;d\ne 0.
\end{align}
\end{lemma}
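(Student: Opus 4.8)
The plan is to obtain \eqref{second-order-subdifferential-condition-single-dual} from the second-order subdifferential characterization of tilt stability applied to $\varphi$, combined with a second-order chain rule that expresses the limiting coderivative $D^{*}(\partial\varphi)(\bar{x},0)$ through the data of \eqref{comp-prob}. The pivotal observation is that, once we know the chain rule
\begin{align*}
D^{*}(\partial\varphi)(\bar{x},0)(d)=\nabla_{xx}^{2}L(\bar{x},\bar{u})d+\nabla F(\bar{x})^{T}D^{*}(\partial g)(F(\bar{x}),\bar{u})(\nabla F(\bar{x})d),
\end{align*}
every $\zeta\in D^{*}(\partial\varphi)(\bar{x},0)(d)$ has the form $\zeta=\nabla_{xx}^{2}L(\bar{x},\bar{u})d+\nabla F(\bar{x})^{T}z$ with $z\in D^{*}(\partial g)(F(\bar{x}),\bar{u})(\nabla F(\bar{x})d)$, whence
\begin{align*}
\langle\zeta,d\rangle=\langle\nabla_{xx}^{2}L(\bar{x},\bar{u})d,d\rangle+\langle z,\nabla F(\bar{x})d\rangle.
\end{align*}
Thus the left-hand side of \eqref{second-order-subdifferential-condition-single-dual} is precisely $\min\{\langle\zeta,d\rangle\mid\zeta\in D^{*}(\partial\varphi)(\bar{x},0)(d)\}$, and the lemma reduces to bounding this minimum below by $\frac{1}{\kappa}\|d\|^{2}$.

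To produce this lower bound, I would invoke the coderivative characterization of tilt-stable minimizers (see \cite{Poliquin1998,Mordukhovich2024} and the composite version in \cite{MordukhovichTangWang-tilt-2025}): since $g$ is l.s.c.\ convex with $F,h$ smooth, $\varphi$ is prox-regular and subdifferentially continuous at $\bar{x}$ for $0$, so the tilt stability of $\bar{x}$ with modulus $\kappa$ yields
\begin{align*}
\langle\zeta,d\rangle\geq\frac{1}{\kappa}\|d\|^{2}\quad\mbox{for all }\;\zeta\in D^{*}(\partial\varphi)(\bar{x},0)(d)\;\mbox{ and all }\;d\ne0.
\end{align*}
Feeding the chain-rule representation into this estimate and minimizing over the admissible $z$ gives \eqref{second-order-subdifferential-condition-single-dual} at once; the degenerate case in which $D^{*}(\partial g)(F(\bar{x}),\bar{u})(\nabla F(\bar{x})d)=\emptyset$ is automatic, since the minimum is then taken over the empty set and equals $+\infty$.

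The main obstacle is establishing the single-multiplier chain rule, and here the neighborhood singleton hypothesis on $\Lambda$ is decisive. Under RCQ \eqref{RCQ} every subgradient near $0$ factors as $a=\nabla h(x)+\nabla F(x)^{T}u$ with $u\in\partial g(F(x))$, and the singleton assumption forces this $u$ to be \emph{unique} for all $(x,a)$ in a neighborhood of $(\bar{x},0)$; consequently $\operatorname{gph}\partial\varphi$ is locally the injective image of $\operatorname{gph}\partial g$ under the smooth map determined by $F$ and $\nabla h$, and every sequence $(x^{k},a^{k})\to(\bar{x},0)$ in $\operatorname{gph}\partial\varphi$ carries uniquely determined multipliers $u^{k}\to\bar{u}$. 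This uniqueness along sequences is exactly what collapses the general union-over-multipliers second-order chain rule of \cite{Mordukhovich2012,MordukhovichSarabi2015} to the stated single-multiplier equality for the \emph{limiting} coderivative, with assumption (A1) supplying the parabolic regularity needed for the requisite second-order calculus. I expect the careful bookkeeping in this coderivative passage---verifying graph closedness, boundedness and convergence of the approximating multipliers, and exactness rather than mere inclusion of the chain rule---to be the technical heart of the proof.
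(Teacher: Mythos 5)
Your overall architecture (coderivative characterization of tilt stability for $\varphi$ plus a second-order chain rule) contains a genuine gap at its pivot: the chain rule you need is not the one your argument supports. To lower-bound the minimum in \eqref{second-order-subdifferential-condition-single-dual}, you must apply the tilt-stability estimate to $\zeta=\nabla_{xx}^{2}L(\bar{x},\bar{u})d+\nabla F(\bar{x})^{T}z$ for \emph{every} $z\in D^{*}(\partial g)(F(\bar{x}),\bar{u})(\nabla F(\bar{x})d)$, i.e., you need the inclusion
\begin{align*}
\nabla_{xx}^{2}L(\bar{x},\bar{u})d+\nabla F(\bar{x})^{T}D^{*}(\partial g)(F(\bar{x}),\bar{u})(\nabla F(\bar{x})d)\subset D^{*}(\partial\varphi)(\bar{x},0)(d).
\end{align*}
Your ``locally injective image'' reasoning delivers only the opposite (upper) inclusion: uniqueness of multipliers along $\operatorname{gph}\partial\varphi$ lets you assign a multiplier to each subgradient pair of $\varphi$, which is the direction of the known SOQC-based chain rules of \cite{Mordukhovich2012,MordukhovichSarabi2015}, and that direction is useless here since the minimum over the full coderivative set $D^{*}(\partial g)(F(\bar{x}),\bar{u})(\nabla F(\bar{x})d)$ may be attained at a $z$ not arising from any $\zeta$. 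The needed reverse inclusion fails to follow from the lemma's hypotheses: elements of $D^{*}(\partial g)$ are generated by sequences $(y^{k},u^{k})\in\operatorname{gph}\partial g$ with $y^{k}\to F(\bar{x})$ unconstrained, whereas $\operatorname{gph}\partial\varphi$ only records subgradients of $g$ at points of the form $F(x)$; lifting such sequences requires surjectivity of $\nabla F(\bar{x})$ or a nondegeneracy condition of type \eqref{nondegeneracy-cond}, neither of which is assumed---the lemma posits only RCQ \eqref{RCQ}, (A1), and the local singleton property of $\Lambda$. Your closing sentence correctly flags this ``exactness rather than mere inclusion'' as the technical heart, but it is precisely the part that cannot be supplied under the stated assumptions.

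The paper avoids the chain rule altogether. It argues by contradiction: choosing $\bar{z}$ attaining the minimum in \eqref{second-order-subdifferential-condition-single-dual}, it approximates $\bar{z}$ by \emph{regular} coderivative elements $z^{k}\in\widehat{D}^{*}(\partial g)(F(x^{k}),u^{k})(\nabla F(x^{k})d^{k})$ at nearby graph points whose base points have the composite form $F(x^{k})$ and whose multipliers are unique, $\Lambda(x^{k},\nabla h(x^{k})+\nabla F(x^{k})^{T}u^{k},0)=\{u^{k}\}$; it then invokes a pointwise second-\emph{subderivative} characterization of tilt stability at those nearby points (via \cite[Lemma~4.1]{MordukhovichTangWang-tilt-2025} and \cite[Theorem~3.3]{Chieu2018}), yielding
\begin{align*}
\left\langle\nabla_{xx}^{2}L(x^{k},u^{k})d^{k},d^{k}\right\rangle+d^{2}g(F(x^{k}),u^{k})(-\nabla F(x^{k})d^{k})\geq\frac{1}{\kappa}\|d^{k}\|^{2},
\end{align*}
and bridges regular coderivatives with graphical derivatives through the polarity result \cite[Proposition~8.37]{Rockafellar1998} and \cite[Lemma~3.6]{Chieu2021} before passing to the limit. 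Note that (A1) enters there through twice epi-differentiability and the identification of $D(\partial g)$ with the subgradient of $\frac{1}{2}d^{2}g$, not through any coderivative chain rule. Your step invoking the quantitative Poliquin--Rockafellar-type estimate for $D^{*}(\partial\varphi)$ is sound in itself ($\varphi$ is indeed prox-regular and subdifferentially continuous under RCQ), but without the reverse inclusion your proof cannot be completed as proposed.
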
\vspace*{-0.05in} 
\begin{proof}
Arguing  by contradiction, suppose that there exists $0\neq d\in\mathcal{R}^{n}$ with $\nabla F(\bar{x})d\in\operatorname{dom}D^{*}(\partial g)(F(\bar{x}),\bar{u})$ such that \eqref{second-order-subdifferential-condition-single-dual} fails, i.e.,
\begin{align}\label{contradiction-second-order-subdifferential-cond}
\left\langle\nabla_{xx}^{2}L(\bar{x},\bar{u})d,d\right\rangle+\min_{z\in D^{*}(\partial g)(F(\bar{x}),\bar{u})(\nabla F(\bar{x})d)}\left\langle\nabla F(\bar{x})d,z\right\rangle<\frac{1}{\kappa}\|d\|^{2}.
\end{align}
Take $\bar{z}\in D^{*}(\partial g)(F(\bar{x}),\bar{u})(\nabla F(\bar{x})d)$ with $\min\limits_{z\in D^{*}(\partial g)(F(\bar{x}),\bar{u})(\nabla F(\bar{x})d)}\left\langle\nabla F(\bar{x})d,z\right\rangle=\left\langle\bar{z},\nabla F(\bar{x})d\right\rangle$ and find sequences $(x^{k},u^{k})\rightarrow(\bar{x},\bar{u})$ with $u^{k}\in\partial g(F(x^{k}))$ and $(z^{k},d^{k})\rightarrow(\bar{z},d)$ with $z^{k}\in\widehat{D}^{*}(\partial g)(F(x^{k}),u^{k})(\nabla F(x^{k})d^{k})$ satisfying $\nabla h(x^{k})+\nabla F(x^{k})^{T}u^{k}\in\mathcal{V}$  and $\Lambda(x^{k},\nabla h(x^{k})+\nabla F(x^{k})^{T}u^{k},0)=\{u^{k}\}$ when $k$ is sufficiently large.
It follows from \cite[Lemma~3.1]{MordukhovichTangWang-tilt-2025}, \eqref{inverse-graphical-derivative}, \eqref{sum-rule-graphical-derivative}, and \cite[Lemma~5.2]{MordukhovichTangWang2025} that $-\nabla F(x^{k})d^{k}\in\operatorname{dom}D(\partial g)(F(x^{k}),u^{k})$. Combining \cite[Lemma~4.1]{MordukhovichTangWang-tilt-2025} and \cite[Theorem 3.3]{Chieu2018} brings us to the inequality
\begin{align*}
\left\langle\nabla_{xx}^{2}L(x^{k},u^{k})d^{k},d^{k}\right\rangle+d^{2}g(F(x^{k}),u^{k})(-\nabla F(x^{k})d^{k})\geq\frac{1}{\kappa}\|d^{k}\|^{2}.
\end{align*}
Using \cite[Proposition~8.37]{Rockafellar1998} and \cite[Lemma~3.6]{Chieu2021} implies that
\begin{align*}
\big\langle z^{k},\nabla F(x^{k})d^{k}\big\rangle\geq\big\langle w^{k},-\nabla F(x^{k})d^{k}\big\rangle=d^{2}g(F(x^{k}),u^{k})(-\nabla F(x^{k})d^{k}), 
\end{align*} 
where $w^{k}\in D(\partial g)(F(x^{k}),u^{k})(-\nabla F(x^{k})d^{k})$. Passing to the limit as $k\rightarrow\infty$, we arrive at a contradiction with \eqref{contradiction-second-order-subdifferential-cond} and thus complete the proof of the lemma.
\end{proof}\vspace*{-0.05in}

To proceed further, let us define some mappings associated  with the perturbed problem \eqref{comp-prob-perturb} and its KKT system \eqref{eq:kkt-perturbed-cop} via the set of Lagrange multipliers \eqref{lag-map} by
\begin{align*}
\overline{M}_{\delta}(a,b)&:=\mathop{\operatorname{arg}\min}_{\|x-\bar{x}\|\leq\delta}\big\{\tilde{\varphi}(x,b)-\langle a,x\rangle\big\}\quad \mbox{with}\ \bar{x}\in\overline{M}_{\delta}(\bar{a},\bar{b}),\\
\overline{m}_{\delta}(a,b)&:=\min_{\|x-\bar{x}\|\leq\delta}\big\{\tilde{\varphi}(x,b)-\langle a,x\rangle\big\},\\
\widehat{M}_{\delta}(a,b)&:=\big\{(x,u)\,\big|\,x\in\overline{M}_{\delta}(a,b),\, u\in\Lambda(x,a,b),\,\|u-\bar{u}\|\leq\delta\big\}\; \mbox{ with }\;\bar{u}\in\Lambda(\bar{x},\bar{a},\bar{b}),\, \delta>0.
\end{align*}

Using the above constructions, we recall the fundamental properties of (Lipschitzian) {\em full stability}. The first part of the following definition concerns {\em primal full stability} for local minimizers of the original problem \eqref{comp-prob} under perturbations in \eqref{comp-prob-perturb} introduced in \cite{LevyPoliquinRockafellar2000}, while the second notion of {\em primal-dual full stability}, which has been recently introduced in \cite{BenkoRockafellar2024}, addresses behavior of solution-multiplier pairs satisfying the associated KKT system \eqref{eq:kkt-perturbed-cop} under perturbations.\vspace*{-0.05in} 

\begin{definition}\label{full stability} {\bf(i)} A given {\sc local minimizer} $\bar{x}$ of problem \eqref{comp-prob} is {\sc fully stable} with respect to perturbation in \eqref{comp-prob-perturb} if there exists a neighborhood $\mathcal{U}\times\mathcal{V}$ of $(\bar{a},\bar{b})=(0,0)$ such that for all $\delta>0$ sufficiently small, the mapping $\overline{M}_{\delta}$ is single-valued and Lipschitz continuous on $\mathcal{U}\times\mathcal{V}$, and the function $\overline{m}_{\delta}$ is Lipschitz continuous on $\mathcal{U}\times\mathcal{V}$.

{\bf(ii)} The {\sc primal-dual solution} $(\bar{x},\bar{u})$ to the KKT system \eqref{eq:kkt-perturbed-cop} with $(a,b)=(0,0)$ is {\sc fully stable} if there exists a neighborhood $\mathcal{U}\times\mathcal{V}$ of $(\bar{a},\bar{b})=(0,0)$ such that for any $\delta>0$ sufficiently small, the mapping $\widehat{M}_{\delta}$ is
single-valued and Lipschitz continuous on $\mathcal{U}\times\mathcal{V}$, and the function $\overline{m}_{\delta}$ is Lipschitz continuous on $\mathcal{U}\times\mathcal{V}$.
\end{definition}

We refer the reader to \cite[Chapter~5]{Mordukhovich2024} for the state-of-the-art in the study of full stability of local minimizers from Definition~\ref{full stability}(i) and to \cite[Chapter~6]{Mordukhovich2024} for full stability of parametric variational systems related to Definition~\ref{full stability}(ii).\vspace*{0.05in}

In the proof of the main characterizations established below, we need one more lemma, which justifies the following {\em robustness property} of SOQC.\vspace*{-0.05in}

\begin{lemma}\label{lemma-rebust-SOQC}
Let $(\bar{x},\bar{u})$ be a solution to the KKT system \eqref{eq:kkt-perturbed-cop} with $(a,b)=(0,0)$. Suppose that SOQC \eqref{SOQC} holds at $\bar{x}$ for $\bar{u}$. Then there exist neighborhoods $\mathcal{U}$ of $\bar{x}$ and $\mathcal{V}$ of the origin such that SOQC is satisfied at $x$ for $u$  whenever $x\in\mathcal{U}$, $\delta\in\mathcal{V}$ with $(x,u)\in 
S_{\operatorname{KKT}}(\delta,0)$. \vspace*{-0.05in}
\end{lemma}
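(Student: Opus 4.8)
The plan is to argue by contradiction, exploiting the robustness (outer semicontinuity) of the limiting normal cone together with the convergence of the associated multipliers. Suppose the assertion fails. Then there are sequences $x^k\to\bar{x}$, $a^k\to 0$, and $u^k\in\Lambda(x^k,a^k,0)$ with $(x^k,u^k)\in S_{\operatorname{KKT}}(a^k,0)$ along which SOQC is violated at $x^k$ for $u^k$; that is, there exists $d^k\neq 0$ with $d^k\in\operatorname{ker}(\nabla F(x^k)^{T})\cap D^{*}(\partial g)(F(x^k),u^k)(0)$. Normalizing by $\|d^k\|=1$, I extract (by compactness of the unit sphere) a subsequence with $d^k\to\bar{d}$, so that $\|\bar{d}\|=1$ and hence $\bar{d}\neq 0$. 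The second relation in \eqref{eq:kkt-perturbed-cop} with $b=0$ gives $u^k\in\partial g(F(x^k))$, i.e., $(F(x^k),u^k)\in\operatorname{gph}\partial g$, and likewise $(F(\bar{x}),\bar{u})\in\operatorname{gph}\partial g$.

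The crux is to show that $u^k\to\bar{u}$. By Theorem~\ref{theorem-SOQC-nondegenerate-cond}(a), SOQC at $\bar{x}$ for $\bar{u}$ yields the constraint nondegeneracy condition \eqref{nondegeneracy-cond}, which in turn implies RCQ \eqref{RCQ} at $\bar{x}$ (the lineality in \eqref{nondegeneracy-cond} is contained in $\mathcal{T}_{\operatorname{dom}g}(F(\bar{x}))$). Since RCQ is a robust (open) property that also ensures local boundedness of the multiplier mapping $(x,a)\mapsto\Lambda(x,a,0)$ near $(\bar{x},0)$ by \cite{Bonnans2000}, the sequence $\{u^k\}$ is bounded; passing to a further subsequence, $u^k\to\tilde{u}$ for some $\tilde{u}$. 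Closedness of $\operatorname{gph}S_{\operatorname{KKT}}$ (continuity of $\nabla_{x}L$ and of $F$, together with the closed graph of $\partial g$) then gives $\tilde{u}\in\Lambda(\bar{x},0,0)$. Because the nondegeneracy condition forces the Lagrange multiplier to be unique (see \cite{Bonnans2000}), I conclude $\tilde{u}=\bar{u}$; as every subsequential limit equals $\bar{u}$, the whole sequence satisfies $u^k\to\bar{u}$.

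It remains to pass to the limit in the two defining relations of SOQC. Since $F$ is $\mathcal{C}^{1}$, continuity of $\nabla F$ and $\nabla F(x^k)^{T}d^k=0$ yield $\nabla F(\bar{x})^{T}\bar{d}=0$, so $\bar{d}\in\operatorname{ker}(\nabla F(\bar{x})^{T})$. For the coderivative part, the inclusion $d^k\in D^{*}(\partial g)(F(x^k),u^k)(0)$ means $(d^k,0)\in\mathcal{N}_{\operatorname{gph}\partial g}(F(x^k),u^k)$; invoking the robustness (outer semicontinuity) of the limiting normal cone \cite{Rockafellar1998,Mordukhovich2006} along $(F(x^k),u^k)\to(F(\bar{x}),\bar{u})$ in $\operatorname{gph}\partial g$ and $(d^k,0)\to(\bar{d},0)$, I obtain $(\bar{d},0)\in\mathcal{N}_{\operatorname{gph}\partial g}(F(\bar{x}),\bar{u})$, that is, $\bar{d}\in D^{*}(\partial g)(F(\bar{x}),\bar{u})(0)$. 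Thus $\bar{d}$ is a nonzero element of $\operatorname{ker}(\nabla F(\bar{x})^{T})\cap D^{*}(\partial g)(F(\bar{x}),\bar{u})(0)$, contradicting SOQC \eqref{SOQC} at $\bar{x}$ for $\bar{u}$. The main obstacle is the middle step establishing $u^k\to\bar{u}$: it hinges on combining local boundedness of multipliers (from the robustness of RCQ) with uniqueness of the multiplier (from nondegeneracy), whereas the limiting passages in the final step are routine consequences of the closedness of the limiting normal cone and the smoothness of $F$.
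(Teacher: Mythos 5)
Your proof is correct and follows essentially the same contradiction argument as the paper: normalize the violating directions, extract convergent subsequences, and pass to the limit using the outer semicontinuity of the limiting normal cone together with the continuity of $\nabla F$. The only difference is that you carefully justify $u^{k}\rightarrow\bar{u}$ (boundedness of multipliers via RCQ plus uniqueness via nondegeneracy), a step the paper dispatches with ``it is easy to deduce \dots without loss of generality''---your elaboration is sound and indeed necessary, since the limiting contradiction must occur at the specific multiplier $\bar{u}$.
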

\begin{proof}
Assuming the contrary, for any $k$ sufficiently large take a sequence $\delta^{k}\in\frac{1}{k}\mathbb{B}_{\mathcal{R}^{n}}$ and find $(x^{k},u^{k})\in S_{\operatorname{KKT}}(\delta^{k},0)$ with $x^{k}\rightarrow\bar{x}$ and $w^{k}\neq 0$ with $\|w^{k}\|=1$ such that 
\begin{align}\label{contradiction-SOQC}
w^{k}\in\operatorname{ker}(\nabla F(x^{k})^{T})\cap D^{*}(\partial g)(F(x^{k}),u^{k})(0).
\end{align}
It is easy to deduce from the negation of SOQC that both $\{u^{k}\}$ and $\{w^{k}\}$ are bounded, and hence we can suppose without loss of generality that $u^{k}\rightarrow\bar{u}$ and $w^{k}\rightarrow\bar{w}$ with $\|\bar{w}\|=1$. Passing to the limit in \eqref{contradiction-SOQC} as $k\rightarrow\infty$ tells us that 
\begin{align*}
\bar{w}\in\operatorname{ker}(\nabla F(\bar{x})^{T})\cap D^{*}(\partial g)(F(\bar{x}),\bar{u})(0),
\end{align*} 
a contradiction, which verifies the claimed robustness property.
\end{proof}\vspace*{-0.05in}

Now we are ready to establish the first set of equivalences between the major well-posedness properties in composite optimization with their {\em pointbased} second-order characterizations.\vspace*{-0.05in}

\begin{theorem}\label{theorem-aubin-property-SOQC-SSOSC}
Let $(\bar{x},\bar{u})$ be a solution to the KKT system \eqref{eq:kkt-perturbed-cop} with $(a,b)=(0,0)$. Then the following assertions are equivalent:
    
{\bf{(i)}} The KKT solution mapping $S_{\operatorname{KKT}}$ has the Aubin property at the origin for $(\bar{x},\bar{u})$. 
    
{\bf(ii)} SOQC \eqref{SOQC} holds together with the second-order kernel condition at $\bar{x}$ for $\bar{u}$
\begin{equation}\label{aubin-property-equiv-cond}
\left\{d\,|\,0\in\nabla^{2}_{xx}L(\bar{x},\bar{u})d+\nabla F(\bar{x})^{T}D^{*}(\partial g)(F(\bar{x}),\bar{u})(\nabla F(\bar{x})d)\right\}=\{0\}.
\end{equation}
If $\bar{x}$ is a local minimizer of \eqref{comp-prob} and $(A1)$ is fulfilled for $g$ at $F(\bar{x})$, the following  are equivalent:

{\bf{(i')}} The KKT solution mapping $S_{\operatorname{KKT}}$ has the Aubin property at the origin for $(\bar{x},\bar{u})$. There exist  neighborhoods $\mathcal{U}$ of $(\bar{x},\bar{u})$ and $\mathcal{V}$ of the origin such that for $\delta\in\mathcal{V}$, the point $x\in\{x\;|\;(x,u)\in S_{\operatorname{KKT}}(\delta,0)\cap\mathcal{U}\}$ is a local minimizer of problem \eqref{comp-prob-perturb} with $(a,b)=(\delta,0)$.

{\bf(iii)} Both SOQC \eqref{SOQC} and the second-order subdifferential condition \eqref{second-order-subdifferential-condition-single-dual} are satisfied at $\bar{x}$.

{\bf(iv)} SOQC \eqref{SOQC} holds, and $\bar{x}$ is a tilt-stable local minimizer of $\varphi$ with some modulus $\kappa>0$.\\
Moreover, if $g$ is $\mathcal{C}^{2}$-cone reducible at $F(\bar x)$, then the list of equivalences is extended to: 

{\bf(v)} The pair $(\bar{x},\bar{u})$ is a strongly regular point of the KKT system \eqref{eq:kkt-perturbed-cop}.

{\bf(vi)} The constraint nondegeneracy condition \eqref{nondegeneracy-cond} holds at $\bar{x}$ for $\bar{u}$ and $\bar{x}$ is a fully stable local minimizer of problem \eqref{comp-prob-perturb}.
	
{\bf(vii)} The solution $(\bar{x},\bar{u})$ to the KKT system \eqref{eq:kkt-perturbed-cop} is fully stable with respect to the origin.
	
{\bf(viii)} The KKT mapping $\Phi$ from \eqref{Phi} is strongly metrically regular at $(\bar{x},\bar{u})$ for $(0,0)$.\vspace*{-0.05in}
\end{theorem}
\begin{proof}
First we verify equivalence (i)$\Leftrightarrow$(ii). Given $x\in\operatorname{dom}(g\circ F)$, $u\in\partial g(F(x))$, and $(\delta,0)\in\Phi(x,u)$, we prove that the Aubin property of $S_{\operatorname{KKT}}$ at $(\delta,0)$ for $(x,u)$ is equivalent to SOQC at $x$ for $u$ combined with the second-order kernel condition at $x$ for $u$. The coderivative of $\Phi$ at $(x,u)$ for $(\delta,0)$ in the direction $(\Delta\xi,\Delta\eta)$ is represented by
\begin{equation}\label{cod-rep1}
\begin{array}{ll}
D^{*}\Phi((x,u),(\delta,0))(\Delta\xi,\Delta\eta)&=\left[\begin{array}{c}
\nabla^{2}_{xx}L(x,u)\Delta\xi-\nabla F(x)^{T}\Delta\eta\\\nabla F(x)\Delta\xi
\end{array}\right]+\left[\begin{array}{c}
0\\I
\end{array}\right] D^{*}(\partial g^{*})(u,F(x))(\Delta\eta).
\end{array}
\end{equation}
By the Mordukhovich criterion in Lemma~\ref{lemma-aubin-property-Mordukhovich-criteria}, the Aubin property of $S_{\operatorname{KKT}}$ at $(\delta,0)$ for $(x,u)$ is equivalent to the implication $(0,0)\in D^{*}\Phi((x,u),(\delta,0))(\Delta\xi,\Delta\eta)\Longrightarrow(\Delta\xi,\Delta\eta)=(0,0)$. This tells us by \eqref{cod-rep1}
that if $\Delta\zeta\in D^{*}(\partial g^{*})(u,F(x))(\Delta\eta)$ with
\begin{equation}\label{eq:coderivative-Phi}
\nabla^{2}_{xx}L(x,u)\Delta\xi-\nabla F(x)^{T}\Delta\eta=0\;\mbox{  and }\;\nabla F(x)\Delta\xi+\Delta\zeta=0,
\end{equation}
then $\Delta\xi=0$ and $\Delta\eta=0$. Since $-\Delta\eta\in D^{*}(\partial g)(F(x),u)(\nabla F(x)\Delta\xi)$, we get from the first equation of \eqref{eq:coderivative-Phi} that the vector $\Delta\xi=0$
solves the inclusion
\begin{equation*}
0\in\nabla_{xx}^{2}L(x,u)\Delta\xi+\nabla F(x)^{T}D^{*}(\partial g)(F(x),u)(\nabla F(x)\Delta\xi).
\end{equation*}
This allows us to deduce from the equivalence
\begin{equation*}
0\in D^{*}\Phi((x,u),(\delta,0))(0,\Delta\eta)\ \Longleftrightarrow\ \nabla F(x)^{T}\Delta\eta=0,\ -\Delta\eta\in D^{*}(\partial g)(F(x),u)(0)
\end{equation*}
that $\Delta\eta=0$. The reverse implication is easy. Indeed, the second-order kernel condition gives us $\Delta\xi=0$ in \eqref{eq:coderivative-Phi}, which ensures by SOQC at $x$ for $u$ that $\Delta\eta=0$. Then the Aubin property follows directly from the characterization in Lemma~\ref{lemma-aubin-property-Mordukhovich-criteria}.
Letting $(x,u):=(\bar{x},\bar{u})$ and $\delta=0$ yields the equivalence between the Aubin property of $S_{\operatorname{KKT}}$ at the origin for $(\bar{x},\bar{u})$ and
SOQC \eqref{SOQC} combined with the second-order kernel condition \eqref{aubin-property-equiv-cond}. 

To prove the implication (i')$\Rightarrow$(iv), we first show that there exist neighborhoods $\mathcal{U}$ of $(\bar{x},\bar{u})$ and $\mathcal{V}$ of the origin such that the second-order kernel condition \eqref{aubin-property-equiv-cond} is satisfied at $x$ for $u$ whenever $(x,u)\in\mathcal{U}$, $a\in\mathcal{V}$ with $(x,u)\in S_{\operatorname{KKT}}(a,0)$. Suppose in contrast that for any 
$k$ sufficiently large there exist $a^{k}\in\frac{1}{k}\mathbb{B}_{\mathcal{R}^{n}}$, $(x^{k},u^{k})\in S_{\operatorname{KKT}}(a^{k},0)$ with $x^{k}\rightarrow\bar{x}$, $u^{k}\rightarrow\bar{u}$ and $d^{k}\neq 0$ with $\|d^{k}\|=1$ such that
\begin{equation*}
0\in\nabla_{xx}^{2}L(x^{k},u^{k})d^{k}+\nabla F(x^{k})^{T}D^{*}(\partial g)(F(x^{k}),u^{k})(\nabla F(x^{k})d^{k}).
\end{equation*}
By the outer semicontinuous of the coderivative operator, passing to the limit in the above inclusion and assuming without loss of generality $d^{k}\rightarrow\bar{d}$ with $\|\bar{d}\|=1$,  it yields that
\begin{align*}
0\in\nabla_{xx}^{2}L(\bar{x},\bar{u})\bar{d}+\nabla F(\bar{x})^{T}D^{*}(\partial g)(F(\bar{x}),\bar{u})(\nabla F(\bar{x})\bar{d}),
\end{align*} 
which is a contradiction with \eqref{aubin-property-equiv-cond}. Based on SOQC \eqref{SOQC}, by Lemma \ref{lemma-rebust-SOQC}, SOQC holds at $x$ for $u$. Take the polar operation of both sides of \eqref{SRCQ}  and observe by \cite[Proposition 3.3]{Tangwang2024} with 
\cite[Theorem 13.57]{Rockafellar1998} that SOQC \eqref{SOQC} implies SRCQ \eqref{SRCQ} and then $\Lambda(x,a,0)$ is a singleton. Combining Lemma \ref{lemma-rebust-SOQC} and the proof about (i)$\Leftrightarrow$(ii), we conclude that there exist neighborhoods $\mathcal{U}$ of $(\bar{x},\bar{u})$ and $\mathcal{V}$ of the origin such that whenever $(x,u)\in\mathcal{U}$ and $\delta\in\mathcal{V}$ with $(\delta,0)\in\Phi(x,u)$, the Aubin property holds for $S_{\operatorname{KKT}}$ at $(\delta,0)$ for $(x,u)$. Given $(\Delta\xi',\Delta\eta')$, calculate the graphical derivative \eqref{gr-der} of $\Phi$ at $(x,u)$ for $(\delta,0)$ by
\begin{equation*}
D\Phi((x,u),(\delta,0))(\Delta\xi',\Delta\eta')=\left[\begin{array}{c}
\nabla_{xx}^{2}L(x,u)\Delta\xi'+\nabla F(x)^{T}\Delta\eta'\\-\nabla F(x)\Delta\xi'+D(\partial g^{*})(u,F(x))(\Delta\eta')
\end{array}\right].
\end{equation*}
Hence the inclusion $(0,0)\in D\Phi((x,u),(\delta,0))(\Delta\xi',\Delta\eta')$ yields $\nabla_{xx}^{2}L(x,u)\Delta\xi'+\nabla F(x)^{T}\Delta\eta'=0$. It follows from 
\cite[Proposition~3.2]{MordukhovichTangWang-tilt-2025} that $g$ is twice epi-differentiable at the points in question. This leads us by using \eqref{inverse-graphical-derivative}, \eqref{inverse-coderivative}, and \cite[Theorem~13.57]{Rockafellar1998} that 
\begin{equation*}
0\in-\Delta\eta'+D^{*}(\partial g)(F(x),u)(\nabla F(x)\Delta\xi')\Longleftrightarrow 0\in\nabla F(x)\Delta\xi'+D^{*}(\partial g^{*})(u,F(x))(-\Delta\eta'),
\end{equation*}
and thus $(0,0)\in D^{*}\Phi((x,u),(\delta,0))(\Delta\xi',-\Delta\eta')$. Employing the characterization of Lemma~\ref{lemma-aubin-property-Mordukhovich-criteria}, we deduce from the Aubin property of $S_{\operatorname{KKT}}$ at $(\delta,0)$ for $(x,u)$ that $\Delta\xi'=0$ and $\Delta\eta'=0$. Then the Levy-Rockafellar criterion in Lemma~\ref{lemma-isolated-calmness-graphical-derivative} tells us that $S_{\operatorname{KKT}}$ enjoys the isolated calmness property at $(\delta,0)$ for $(x,u)$. 
Let $\delta=0$, 
Lemma~\ref{lemma-isolated-calm-SRC-SOSC}, valid under (A1), confirms that SRCQ \eqref{SRCQ} and SOSC \eqref{SOSC-equiv-single-dual} hold for $(\bar{x},\bar{u})$. Consequently, this guarantees that the second-order growth condition is satisfied for problem \eqref{comp-prob} at $\bar{x}$ (see, e.g., \cite[Theorem 6.1]{Mohammadi2020} ) and RCQ \eqref{RCQ} holds at $\bar{x}$. It is straightforward to verify that the second-order growth condition for problem \eqref{eq:problem-equivalent} also holds at the point $(\bar{x},g(F(\bar{x})))$. Applying \cite[Proposition 4.32]{Bonnans2000}, we conclude that the solution set of problem \eqref{canonical-perturbed-constrained-prob} with the parameter $a=\delta$ is nonempty. This result implies that the solution set of problem \eqref{comp-prob-perturb} with parameters $(a,b)=(\delta,0)$ is also nonempty. Given any $\delta$ with its norm sufficiently small, let $x^{\delta}$ be a local minimizer of problem \eqref{comp-prob-perturb} with $(a,b)=(\delta,0)$. By \cite[Proposition 4.32]{Bonnans2000}, $x^{\delta}$ is sufficiently close to $\bar{x}$. The robustness property of RCQ implies RCQ holds at $x^{\delta}$ and there exists a unique $u^{\delta}\in\Lambda(x^{\delta},\delta,0)$ with $\Lambda(x^{\delta},\delta,0)=\{u^{\delta}\}$. We conclude that for any $(x,u)\in S_{\operatorname{KKT}}(\delta,0)$, $x$ is a local minimizer of problem \eqref{comp-prob-perturb} with $(a,b)=(\delta,0)$ and $S_{\operatorname{KKT}}$ has the isolated calmness property at $(\delta,0)$ for $(x,u)$. Applying Lemma~\ref{lemma-isolated-calm-SRC-SOSC}, it follows that SOSC \eqref{SOSC-equiv-single-dual} holds for any $(x,u)\in\Phi^{-1}(\delta,0)$ with $\delta\in\mathcal{V}$ and $(x,u)\in\mathcal{U}$. Deduce from \cite[Theorem~4.1]{MordukhovichTangWang-tilt-2025} the existence of $\kappa>0$ such that $\bar{x}$ is a tilt-stable local minimizer of $\varphi$ with modulus $\kappa$, which justifies assertion (iv).

To proceed further, recall that SOQC \eqref{SOQC} implies SRCQ \eqref{SRCQ}. Then Lemma~\ref{lemma-rebust-SOQC} gives us neighborhoods $\widehat{\mathcal{U}}$ of $(\bar{x},\bar{u})$ and $\widehat{\mathcal{V}}$ of the origin such that SOQC holds at $x$ for $u\in\Lambda(x,\delta,0)$, and consequently $\Lambda(x,\delta,0)$ is a singleton, where $(x,u)\in\widehat{\mathcal{U}}$, $\delta\in\widehat{\mathcal{V}}$, and $(x,u)\in S_{\operatorname{KKT}}(\delta,0)$. Thus implication (iv)$\Rightarrow$(iii) follows from Lemma~\ref{lemma-tilt-stability-second-order-subdifferential-cond}.

Next we aim at verifying implication (iii)$\Rightarrow{\rm(iv)}$. Recall that if both SOQC \eqref{SOQC} and the second-order subdifferential condition \eqref{second-order-subdifferential-condition-single-dual} hold at $x$, then $\Lambda(x,a,0)$ is a singleton with $\|a\|$ sufficiently small. Employ \cite[Theorem 3.3]{Chieu2018} and  suppose on the contrary that there exist sequences $x^{k}\rightarrow \bar{x}$, $a^{k}\rightarrow 0$ as $k\rightarrow\infty$  with $a^{k}\in\partial \varphi(x^{k})$, $w^{k}\in D(\partial \varphi)(x^{k},a^{k})(d^{k})$ such that
 $\langle d^{k},w^{k}\rangle\leq 0$ for large $k$. Consider $d^{k}\neq 0$ for such $k$ and, by passing to a subsequence if necessary, assume that $\|d^{k}\|=1$ for large $k$ and that $d^{k}\rightarrow d$ with $\|d\|=1$. By \cite[Lemma~4.1]{MordukhovichTangWang-tilt-2025}, there exists $u^{k}\in\Lambda(x^{k},a^{k},0)$ with $u^{k}\rightarrow\bar{u}$ and $v^{k}\in D(\partial g)(F(x^{k}),u^{k})(\nabla F(x^{k})d^{k})$ such that
 \begin{equation*}
\begin{aligned}
\langle d^{k},w^{k}\rangle&=\langle\nabla_{xx}^{2}L(x^{k},u^{k})d^{k},d^{k}\rangle+d^{2}g(F(x^{k}),u^{k})(\nabla F(x^{k})d^{k})\\
&=\langle\nabla_{xx}^{2}L(x^{k},u^{k})d^{k},d^{k}\rangle+\langle \nabla F(x^{k})d^{k},v^{k}\rangle.
\end{aligned}
\end{equation*}
 By \cite[Theorem 13.57]{Rockafellar1998} and \cite[Proposition 3.2]{MordukhovichTangWang-tilt-2025}, taking a subsequence if necessary and letting $k\rightarrow\infty$, the coderivative robustness tells us that there exist $v\in D^{*}(\partial g)(F(\bar{x}),\bar{u})(\nabla F(\bar{x})d)$ with
 \begin{equation*}
 \langle d^{k},w^{k}\rangle\rightarrow\langle\nabla_{xx}^{2}L(\bar{x},\bar{u})d,d\rangle+\langle \nabla F(\bar{x})d,v\rangle\leq0,
 \end{equation*}
 which contradicts the second-order subdifferential condition \eqref{second-order-subdifferential-condition-single-dual} and thus verifies (iv). 
 
Combining \cite[Theorem 3.2]{Chieu2018} and (iv) yields the Aubin property of $S_{\operatorname{KKT}}$ at the origin for $(\bar{x},\bar{u})$ and  the single-valuedness of $S_{\operatorname{KKT}}$ around $(\bar{x},\bar{u})$. By Mimicking the proof of (i')$\Rightarrow$(iv), we get that $x\in\{x\,|\,(x,u)\in S_{\operatorname{KKT}}\cap\mathcal{U}\}$ is a unique local minimizer of \eqref{comp-prob-perturb} with $(a,b)=(\delta,0)$ and the neighborhood $\mathcal{U}$ and $\mathcal{V}$ of $(\bar{x},\bar{u})$ and the origin, respectively. Implication (iv)$\Rightarrow$(i') is justified.
	 
Further, it follows from Lemma~\ref{lemma:equivalent-second-order-QC}, (iv), Proposition~\ref{prop-parabolically-regular-delta-epi-f}, and \cite[Theorem~4.1]{MordukhovichTangWang-tilt-2025} that (i) is equivalent to the Aubin property of the KKT mapping for the auxiliary problem \eqref{eq:problem-equivalent} at the origin for $((\bar{x},g(F(\bar{x}))),(\bar{u},-1))$. Therefore, (i') is also equivalent to SOQC \eqref{SOQC} combined with the second-order subdifferential condition for problem \eqref{eq:problem-equivalent}. The strong regularity of $((\bar{x},g(F(\bar{x}))),(\bar{u},-1))$ for the KKT system associated with problem \eqref{eq:problem-equivalent} directly implies the fulfillment of this property of $(\bar{x},\bar{u})$ for the KKT system \eqref{eq:kkt-perturbed-cop}, which consequently guarantees the Aubin property of the solution mapping. Moreover, if $(\bar{x}, g(F(\bar{x})))$ is a fully stable minimizer of problem \eqref{eq:problem-equivalent}, then $\bar{x}$ is fully stable for problem \eqref{comp-prob}, since the full stability yields the tilt-stability of $\bar{x}$. Remembering that $\bar{x}$ is a local minimizer of $\varphi$ allows us to deduce from \cite[Lemma~4.2]{Tangwang2024}, \eqref{inverse-graphical-derivative}, and \eqref{sum-rule-graphical-derivative} that 
$$
\begin{array}{ll}
&\mathcal{C}(F(\bar{x}),g(F(\bar{x})))=\{(d,\beta)\;|\;dg(F(\bar{x}))(d)=\langle\bar{u},d\rangle,\;\beta=0\},\\
&\operatorname{dom}D^{*}\mathcal{N}_{\operatorname{epi}g}((F(\bar{x}),g(F(\bar{x}))),(\bar{u},-1))\subset\{(d,0)\;|\;d\in\operatorname{aff}\{y\;|\;dg(F(\bar{x}))(y)=\langle\bar{u},y\rangle\}\}.
\end{array}
$$
Then Lemmas~\ref{prop:coderivative-equiv-partial-f-partial-normal-epif} and \ref{lemma:equivalent-second-order-QC}  together with 
(i) and (ii) ensure that the Aubin property of the KKT system for problem \eqref{canonical-perturbed-constrained-prob} also holds at the origin for $((F(\bar
x),g(F(\bar
x))),(\bar{u},-1))$. By Lemma \ref{lemma:equivalent-second-order-QC} and \cite[Theorem~1]{KlatteKummer2013} it follows that the nondegeneracy condition \eqref{nondegeneracy-cond} holds at $\bar{x}$ for $\bar{u}$. The equivalences in (iii)$\Leftrightarrow$(v)$\Leftrightarrow$(vi) are due to Theorem~\ref{theorem-SOQC-nondegenerate-cond} and \cite[Theorem~5.6]{MordukhovichNghiaRockafellar2015}. It follows from \cite[Theorems~1.5, 3.1, and 4.1]{BenkoRockafellar2024} that (vii)$\Rightarrow$(viii). Implication (v)$\Rightarrow$(viii) can be derived by mimicking the proof of \cite[Corollary~2.2]{Robinson1980}. Combining finally Lemmas~\ref{lemma-aubin-property-Mordukhovich-criteria} and \ref{lemma-strong-regularity-strict-graphical-derivative}, (viii) implies that the Aubin property of $S_{\operatorname{KKT}}$ holds at the origin for $(\bar{x},\bar{u})$ and there exist neighborhoods $\mathcal{U}$ of $(\bar{x},\bar{u})$ and $\mathcal{V}$ of the origin such that for any $\delta\in\mathcal{V}$, the set $S_{\operatorname{KKT}}(\delta,0)\cap\mathcal{U}$ is a singleton. Repeating the proof of (i')$\Rightarrow$(iv) gives us $x\in\{x\,|\,(x,u)\in S_{\operatorname{KKT}}(\delta,0)\cap(\mathcal{U}\times\mathcal{V})\}$ is the unique local minimizer of problem \eqref{comp-prob-perturb} with $(a,b)=(\delta,0)$. We arrive at (viii)$\Rightarrow$(i') and  (viii)$\Leftrightarrow$(iv). Therefore (viii) tells us $\varphi$ is variationally strongly convex at $\bar{x}$ for the origin by \cite[Proposition 2.9]{KhanhMordukhovichPhat2023} and then (viii)$\Rightarrow$(vii) follows by \cite[Theorem~4.2]{BenkoRockafellar2024}. This readily completes the proof.
\end{proof}\vspace*{-0.05in}

To proceed further, consider the mapping
\begin{align}\label{Z}
Z(x,u):=\left[\begin{array}{c}
\nabla_{x}L(x,u)\\
u-\operatorname{Prox}_{g^{*}}(F(x)+u)
\end{array}\right]
\end{align}
and present the following lemma, which allows us to establish the second set of equivalences between well-posedness properties in composite optimization.\vspace*{-0.05in}

\begin{lemma}\label{lemma-SOQC-SSOSC-nonsigular-partial-Z}
Let $(\bar{x},\bar{u})$ be a solution to the KKT system \eqref{eq:kkt-perturbed-cop} with $(a,b)=(0,0)$, where $g$ satisfies {\rm(A1)} and {\rm(A2)} at $F(\bar{x})$ for $\bar{u}$. If SOQC \eqref{SOQC} and SSOSC \eqref{SSOSC} hold at $\bar{x}$, then all the matrices from the generalized Jacobian $\mathcal{J} Z(\bar{x},\bar{u})$ are nonsingular.
\end{lemma}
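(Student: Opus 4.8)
The plan is to reduce the assertion to the injectivity of an explicit family of block matrices, and then to exclude nontrivial kernel vectors by splitting the analysis into a ``primal'' part governed by SSOSC \eqref{SSOSC} and a ``dual'' part governed by SOQC \eqref{SOQC}. First I would compute the generalized Jacobian of $Z$ in \eqref{Z}. Using Moreau's identity $\operatorname{Prox}_{g^{*}}(w)=w-\operatorname{Prox}_{g}(w)$, the second block of $Z$ becomes $\operatorname{Prox}_{g}(F(x)+u)-F(x)$, so the only nonsmoothness enters through $\operatorname{Prox}_{g}$ composed with the ${\cal C}^{1}$-mapping $(x,u)\mapsto F(x)+u$, whose derivative $[\nabla F(x)\ I]$ is surjective. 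Hence the Clarke chain and sum rules yield
\begin{align*}
\mathcal{J}Z(\bar{x},\bar{u})\subseteq\left\{\left[\begin{array}{cc}
\nabla_{xx}^{2}L(\bar{x},\bar{u}) & \nabla F(\bar{x})^{T}\\
-(I-U)\nabla F(\bar{x}) & U
\end{array}\right]\,\Big|\,U\in\mathcal{J}\operatorname{Prox}_{g}(F(\bar{x})+\bar{u})\right\},
\end{align*}
where each $U$ is symmetric with $0\preceq U\preceq I$, being a generalized Hessian of the Moreau envelope $e_{g}$. It therefore suffices to prove that every matrix $V(U)$ of the displayed form is injective.

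Take $(\Delta\xi,\Delta\eta)$ with $V(U)(\Delta\xi,\Delta\eta)^{T}=0$, i.e.\ $\nabla_{xx}^{2}L(\bar{x},\bar{u})\Delta\xi+\nabla F(\bar{x})^{T}\Delta\eta=0$ and $-(I-U)\nabla F(\bar{x})\Delta\xi+U\Delta\eta=0$. I would first kill $\Delta\xi$ via SSOSC. Setting $v:=\nabla F(\bar{x})\Delta\xi$, the second equation reads $U(\Delta\eta+v)=v$, so $v\in\operatorname{rge}U\subseteq\bigcup_{U'}\operatorname{rge}U'=\operatorname{dom}\Gamma_{g}(F(\bar{x}),\bar{u})$ and the pair $(\Delta\eta+v,U)$ is admissible in the minimization \eqref{Gamma}. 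Thus, using the first equation,
\begin{align*}
\Gamma_{g}(F(\bar{x}),\bar{u})(v)\leq\langle v,\Delta\eta\rangle=\langle\Delta\xi,\nabla F(\bar{x})^{T}\Delta\eta\rangle=-\langle\nabla_{xx}^{2}L(\bar{x},\bar{u})\Delta\xi,\Delta\xi\rangle.
\end{align*}
Since SOQC forces $\Lambda(\bar{x},0,0)=\{\bar{u}\}$ (SOQC $\Rightarrow$ constraint nondegeneracy $\Rightarrow$ SRCQ $\Rightarrow$ uniqueness of the multiplier by Theorem~\ref{theorem-SOQC-nondegenerate-cond}), the SSOSC \eqref{SSOSC} reduces to the single-$\bar{u}$ inequality, which together with the above display gives $\langle\nabla_{xx}^{2}L(\bar{x},\bar{u})\Delta\xi,\Delta\xi\rangle+\Gamma_{g}(F(\bar{x}),\bar{u})(v)\leq0$; hence $\Delta\xi=0$.

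With $\Delta\xi=0$ the two equations collapse to $\nabla F(\bar{x})^{T}\Delta\eta=0$ and $U\Delta\eta=0$, i.e.\ $\Delta\eta\in\operatorname{ker}(\nabla F(\bar{x})^{T})\cap\operatorname{ker}U$. The remaining task is to show the inclusion $\operatorname{ker}U\subseteq D^{*}(\partial g)(F(\bar{x}),\bar{u})(0)$; granting it, SOQC \eqref{SOQC} forces $\Delta\eta=0$ and the proof is complete. For this inclusion I would argue in the spirit of Lemma~\ref{prop:coderivative-equiv-partial-f-partial-normal-epif}: at any $z$ where $\operatorname{Prox}_{g}$ is differentiable, $\operatorname{gph}\partial g$ is parametrized locally by $\tilde{z}\mapsto(\operatorname{Prox}_{g}(\tilde{z}),\tilde{z}-\operatorname{Prox}_{g}(\tilde{z}))$, which gives $\widehat{D}^{*}(\partial g)(\operatorname{Prox}_{g}(z),z-\operatorname{Prox}_{g}(z))(0)=\operatorname{ker}\nabla\operatorname{Prox}_{g}(z)$ by the definition in \eqref{cod}; passing to the Painlev\'e--Kuratowski outer limit as the differentiability points approach $F(\bar{x})+\bar{u}$ then identifies $D^{*}(\partial g)(F(\bar{x}),\bar{u})(0)$ with the corresponding limit of the subspaces $(\operatorname{rge}\nabla\operatorname{Prox}_{g}(z))^{\perp}$, and assumptions {\rm(A1)}--{\rm(A2)} are used to show that this limit contains $\operatorname{ker}U=(\operatorname{rge}U)^{\perp}$ for every $U\in\mathcal{J}\operatorname{Prox}_{g}(F(\bar{x})+\bar{u})$.

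The hard part will be precisely this last inclusion. Because the kernels (equivalently the ranges) of $\nabla\operatorname{Prox}_{g}$ may jump under limits, a naive passage to the limit along one defining sequence of a $B$-subdifferential element need not reproduce $\operatorname{ker}U$, so the outer-limit identification above requires genuine care. This is exactly where {\rm(A1)} and {\rm(A2)} must enter: {\rm(A2)} pins down the maximal range $\operatorname{rge}\overline{W}=\bigcup_{U}\operatorname{rge}U$ together with the explicit value of $\Gamma_{g}$ (which also underlies the $\Delta\xi$-step), while the parabolic regularity in {\rm(A1)} supplies the twice epi-differentiability and stability of the second-order objects needed to match $D^{*}(\partial g)(F(\bar{x}),\bar{u})(0)$ with the $\operatorname{Prox}_{g}$-kernel data. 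Once this correspondence is established, the reduction, the SSOSC argument, and the final application of SOQC are all routine.
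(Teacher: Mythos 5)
Your reduction and primal step reproduce the paper's proof almost verbatim: the paper likewise writes the two kernel equations $\nabla_{xx}^{2}L(\bar{x},\bar{u})\Delta x+\nabla F(\bar{x})^{T}\Delta u=0$ and $\nabla F(\bar{x})\Delta x-U(\nabla F(\bar{x})\Delta x+\Delta u)=0$ for some $U\in\mathcal{J}\operatorname{Prox}_{g}(F(\bar{x})+\bar{u})$, uses the admissibility of $(\Delta u+\nabla F(\bar{x})\Delta x,U)$ in the minimization \eqref{Gamma} to get $\Gamma_{g}(F(\bar{x}),\bar{u})(\nabla F(\bar{x})\Delta x)\leq\langle\nabla F(\bar{x})\Delta x,\Delta u\rangle=-\langle\nabla_{xx}^{2}L(\bar{x},\bar{u})\Delta x,\Delta x\rangle$, and invokes SSOSC with the singleton $\Lambda(\bar{x},0,0)=\{\bar{u}\}$ (forced by SOQC) to conclude $\Delta x=0$, leaving $\Delta u\in\operatorname{ker}(\nabla F(\bar{x})^{T})\cap\operatorname{ker}U$. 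Up to that point your argument is correct and is the paper's argument.

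The genuine gap is exactly where you flag it: the inclusion $\operatorname{ker}U\subset\pm D^{*}(\partial g)(F(\bar{x}),\bar{u})(0)$ for \emph{every} $U\in\mathcal{J}\operatorname{Prox}_{g}(F(\bar{x})+\bar{u})$ is asserted but not proved. Your proposed route---identify $\widehat{D}^{*}(\partial g)(\cdot)(0)$ with $\operatorname{ker}\nabla\operatorname{Prox}_{g}(z)$ at differentiability points (which is fine, using symmetry of $\nabla\operatorname{Prox}_{g}$) and then take the Painlev\'e--Kuratowski outer limit---fails as stated, for the reason you yourself identify: rank is lower semicontinuous, so if $\nabla\operatorname{Prox}_{g}(z^{k})\rightarrow U$ then $\operatorname{ker}U$ can be strictly \emph{larger} than any limit of the kernels $\operatorname{ker}\nabla\operatorname{Prox}_{g}(z^{k})$ (all nearby Jacobians may even be nonsingular while $U$ is not), so the outer limit need not capture a given $d\in\operatorname{ker}U$. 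Moreover, the $U$ produced by the kernel equations ranges over the convex hull $\mathcal{J}\operatorname{Prox}_{g}$, not merely $\partial_{B}\operatorname{Prox}_{g}$, so even a successful limiting argument along defining sequences would not cover all the matrices you must handle. Saying that ``(A1)--(A2) must enter'' here is a statement of intent, not a proof; in particular you never use the specific content of (A2) (the maximal-range element $\overline{W}$ and the Moore--Penrose formula for $\Gamma_{g}$) to close this step. The paper does not attempt your limit argument at all: it closes the step by citing \cite[Corollary~4.1]{Tangwang2024} for $\operatorname{ker}U\subset\{d\,|\,0\in D^{*}\operatorname{Prox}_{g}(F(\bar{x})+\bar{u})(d)\}$ and then converts the latter set into $-D^{*}(\partial g)(F(\bar{x}),\bar{u})(0)$ via the routine inverse and sum rules \eqref{inverse-coderivative} and \eqref{sum-rule-coderivative} (note the minus sign, which you drop; it is harmless only because $\operatorname{ker}(\nabla F(\bar{x})^{T})$ is a subspace, so SOQC still yields $\Delta u=0$). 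To complete your proof you would either have to reprove that corollary or cite it; without it, the dual half of the lemma is unsupported.
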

\begin{proof}
SOQC \eqref{SOQC} implies that $\Lambda(\bar{x},0,0)=\{\bar{u}\}$ is a singleton. For any $E\in\mathcal{J} Z(\bar{x},\bar{u})$, $\Delta x\in\mathcal{R}^{n}$, and $\Delta u\in\mathcal{R}^{m}$ with $E(\Delta x,\Delta u)=0$, we find $U\in\mathcal{J}\operatorname{Prox}_{g}(F(\bar{x})+\bar{u})$ such that
\begin{align}\label{E-dx-du-eq-1}
&\nabla_{xx}^{2}L(\bar{x},\bar{u})\Delta x+\nabla F(\bar{x})^{T}\Delta u=0,\\
&\nabla F(\bar{x})\Delta x-U(\nabla F(\bar{x})\Delta x+\Delta u)=0.\label{E-dx-du-eq-2}
\end{align}
It follows from the definition of $\Gamma_{g}(F(\bar{x}),\bar{u})$ in \eqref{Gamma} and equations \eqref{E-dx-du-eq-1}, \eqref{E-dx-du-eq-2} that
\begin{align*}
\Gamma_{g}(F(\bar{x}),\bar{u})(\nabla F(\bar{x})\Delta x)\leq\left\langle\nabla F(\bar{x})\Delta x,\Delta u\right\rangle=-\left\langle\nabla_{xx}^{2}L(\bar{x},\bar{u})\Delta x,\Delta x\right\rangle.
\end{align*}
Combining this with SSOSC \eqref{SSOSC} yields $\Delta x=0$. Substituting $\Delta x=0$ into \eqref{E-dx-du-eq-1} and \eqref{E-dx-du-eq-2}, we get 
$\Delta u\in\operatorname{ker}(\nabla F(\bar{x})^{T})\cap\operatorname{ker}U$. Applying \cite[Corollary~4.1]{Tangwang2024}, \eqref{inverse-coderivative}, and \eqref{sum-rule-coderivative} leads us to
\begin{align*}
\operatorname{ker}U\subset\left\{d\,\left|\,0\in D^{*}\operatorname{Prox}_{g}(F(\bar{x})+\bar{u})(d)\right.\right\}=-D^{*}(\partial g)(F(\bar{x}),\bar{u})(0)
\end{align*}
and thus ensures that SOQC \eqref{SOQC} implies $\Delta u=0$, which completes the proof.
\end{proof}

Now we are ready to establish the second set of equivalent relationships between well-posedness properties with their pointbased second-order characterizations.\vspace*{-0.05in}

\begin{theorem}\label{theorem-aubin-equivalence-conditions}
Let $\bar{x}$ be a local minimizer of problem \eqref{comp-prob} with $\bar{u}\in\Lambda(\bar{x},0,0)$ as a corresponding Lagrange multiplier,  and let $g$ satisfy both assumptions {\rm(A1), (A2)} at $F(\bar{x})$ for $\bar{u}$. Then we have the following equivalent assertions: 
	
{\bf(i)} The mapping $S_{\operatorname{KKT}}$ has the Aubin property at the origin for $(\bar{x},\bar{u})$. There exist  neighborhoods $\mathcal{U}$ of $(\bar{x},\bar{u})$ and $\mathcal{V}$ of the origin such that for $\delta\in\mathcal{V}$, the point $x\in\{x\;|\;(x,u)\in S_{\operatorname{KKT}}(\delta,0)\cap\mathcal{U}\}$ is a local minimizer of problem \eqref{comp-prob-perturb} with $(a,b)=(\delta,0)$.
	
{\bf(ii)} SOQC \eqref{SOQC} holds together with SSOSC  at $(\bar{x},\bar u)$ given by
\begin{align}\label{SSOSC-SVF}
\left\langle\nabla_{xx}^{2}L(\bar{x},\bar{u})d,d\right\rangle+\Gamma_{g}(F(\bar{x}),\bar{u})(\nabla F(\bar{x})d)>0\;\mbox{ for all }\;d\ne 0.
\end{align} 

{\bf(iii)} SOQC \eqref{SOQC} is satisfied at the point $\bar{x}$, which is a tilt-stable local minimizer of the function $\varphi$ from \eqref{comp-prob}  with some  modulus $\kappa>0$. 

{\bf(iv)} Each matrix from the generalized Jacobian $\mathcal{J}Z(\bar{x},\bar{u})$ of $Z$ in \eqref{Z} is nonsingular.
	
{\bf(v)} The pair $(\bar{x},\bar{u})$ is a strongly regular point of the KKT system \eqref{eq:kkt-perturbed-cop}.
	
{\bf(vi)} The KKT mapping $\Phi$ from \eqref{Phi} is strongly metrically regular at $(\bar{x},\bar{u})$ for $(0,0)$.
	
{\bf(vii)}  The solution $(\bar{x},\bar{u})$ to the KKT system \eqref{eq:kkt-perturbed-cop} is fully stable with respect to the origin. 
\end{theorem}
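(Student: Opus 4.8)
The plan is to establish the seven assertions as equivalent by proving the closed cycle
\[
\text{(ii)}\Longrightarrow\text{(iv)}\Longrightarrow\text{(v)}\Longrightarrow\text{(vi)}\Longrightarrow\text{(i)}\Longleftrightarrow\text{(iii)}\Longleftrightarrow\text{(ii)}
\]
and then attaching (vii) to (v) as a separate two-sided link. The backbone is Theorem~\ref{theorem-aubin-property-SOQC-SSOSC}, which under (A1) already yields the equivalence of (i) with ``SOQC together with the second-order subdifferential condition \eqref{second-order-subdifferential-condition-single-dual}'' and with (iii); in particular (i)$\Longleftrightarrow$(iii) is immediate. The first genuinely new task is to identify the SSOSC of (ii) with that subdifferential condition. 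Here I would invoke Theorem~\ref{theorem:SOVF-second-order-subderivative} (valid under (A1)+(A2)) together with the coderivative representation used in its proof, namely $\Gamma_{g}(F(\bar{x}),\bar{u})(v)=\min_{z\in D^{*}(\partial g)(F(\bar{x}),\bar{u})(v)}\langle v,z\rangle$ on $\operatorname{dom}\Gamma_{g}$. Substituting $v=\nabla F(\bar{x})d$ shows that the left-hand side of SSOSC \eqref{SSOSC-SVF} coincides pointwise with that of \eqref{second-order-subdifferential-condition-single-dual}. Since this common form is positively homogeneous of degree two and lower semicontinuous (the graph of $D^{*}(\partial g)$ being closed), strict positivity ``$>0$ for all $d\ne0$'' is equivalent, by compactness of the unit sphere, to the uniform bound ``$\geq\frac{1}{\kappa}\|d\|^{2}$'' of \eqref{second-order-subdifferential-condition-single-dual} for a suitable $\kappa>0$. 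Noting that SOQC forces $\Lambda(\bar{x},0,0)=\{\bar{u}\}$, so that the sup-form \eqref{SSOSC} collapses to \eqref{SSOSC-SVF}, this gives (ii)$\Longleftrightarrow$(iii) and hence (i)$\Longleftrightarrow$(ii)$\Longleftrightarrow$(iii).

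Next I would run the chain through the generalized-Jacobian characterization. The implication (ii)$\Longrightarrow$(iv) is exactly Lemma~\ref{lemma-SOQC-SSOSC-nonsigular-partial-Z}. For (iv)$\Longrightarrow$(v), observe that the residual map $Z$ in \eqref{Z} is locally Lipschitz around $(\bar{x},\bar{u})$, because $\operatorname{Prox}_{g^{*}}$ is globally Lipschitz and $\nabla_{x}L$ is smooth, while $Z(x,u)=0$ is precisely the unperturbed KKT system. If every matrix in $\mathcal{J}Z(\bar{x},\bar{u})$ is nonsingular, the inverse function theorem for Lipschitzian mappings provides a single-valued Lipschitz localization of $Z^{-1}$ around the origin; reparametrizing the residual of $Z$ in terms of the canonical parameters $(a,b)$ of \eqref{eq:kkt-perturbed-cop} (a Lipschitz bijection via the Moreau identity $\operatorname{Prox}_{g^{*}}=\operatorname{id}-\operatorname{Prox}_{g}$) then yields the strong regularity of $(\bar{x},\bar{u})$, along the route of \cite{Tangwang2024}. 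The implication (v)$\Longrightarrow$(vi) follows by mimicking \cite[Corollary~2.2]{Robinson1980}, exactly as in Theorem~\ref{theorem-aubin-property-SOQC-SSOSC}, and (vi)$\Longrightarrow$(i) is immediate, since strong metric regularity of $\Phi$ means that $S_{\operatorname{KKT}}=\Phi^{-1}$ admits a single-valued Lipschitz continuous localization, which is in particular Lipschitz-like. Together with the block (i)$\Longleftrightarrow$(ii)$\Longleftrightarrow$(iii) this closes the cycle and renders (i)--(vi) equivalent.

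It remains to attach (vii). Following the argument already used for Theorem~\ref{theorem-aubin-property-SOQC-SSOSC}, I would derive (v)$\Longleftrightarrow$(vii) from \cite[Theorems~2.3,~4.1, and~4.2]{BenkoRockafellar2024}, which characterize primal-dual full stability of the KKT solution through the strict graphical derivative and identify it with strong regularity.

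The step I expect to be the main obstacle is (iv)$\Longrightarrow$(v): converting nonsingularity of the Clarke generalized Jacobian of the natural residual $Z$ into strong regularity of the generalized equation $0\in\Phi(x,u)$. The Lipschitzian inverse function theorem only asserts that $Z$ is a local Lipschitz homeomorphism in its own variables, so the delicate part is to verify that the induced parametrization is compatible with the canonical perturbations $(a,b)$ and that the resulting single-valued Lipschitz localization is exactly the one required by Robinson's definition of strong regularity; this is precisely where assumption (A2) and the structure \eqref{Z} of $Z$ (through $\mathcal{J}\operatorname{Prox}_{g}$) enter. A secondary technical point, already needed in the first paragraph, is to confirm that $\Gamma_{g}$ and the coderivative-min form of \eqref{second-order-subdifferential-condition-single-dual} share the same effective domain and are lower semicontinuous, so that the strict SSOSC genuinely upgrades to the uniform second-order estimate rather than only implying it in one direction.
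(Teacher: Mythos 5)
Your proposal is correct and follows essentially the same route as the paper's proof: the same cycle built from (ii)$\Rightarrow$(iv) via Lemma~\ref{lemma-SOQC-SSOSC-nonsigular-partial-Z}, (iv)$\Rightarrow$(v) via Clarke's inverse function theorem along the route of \cite{Tangwang2024}, (v)$\Rightarrow$(vi) by mimicking Robinson, (vi)$\Rightarrow$(i) via Lemma~\ref{lemma-aubin-property-Mordukhovich-criteria}, the identification of \eqref{SSOSC-SVF} with the second-order subdifferential condition \eqref{second-order-subdifferential-condition-single-dual} through the coderivative-min representation of $\Gamma_{g}$, Theorem~\ref{theorem-aubin-property-SOQC-SSOSC} for the block involving (i) and (iii), and \cite{BenkoRockafellar2024} for attaching (vii). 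The only deviation is your direct compactness/homogeneity upgrade of strict positivity in \eqref{SSOSC-SVF} to the uniform bound in \eqref{second-order-subdifferential-condition-single-dual}, which the paper sidesteps (and your own cycle renders redundant) by extracting the uniform estimate from tilt stability via Lemma~\ref{lemma-tilt-stability-second-order-subdifferential-cond}; your upgrade is nonetheless legitimate here because under (A2) the function $\Gamma_{g}(F(\bar{x}),\bar{u})$ is a continuous quadratic form on the subspace determined by $\operatorname{rge}\overline{W}$, which supplies the lower semicontinuity and attainment you flagged as needing confirmation.
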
\vspace*{-0.15in}
\begin{proof}
Employing Theorem~\ref{theorem-aubin-property-SOQC-SSOSC}, Lemma~\ref{lemma-SOQC-SSOSC-nonsigular-partial-Z}, and \cite[Proposition~3.3]{MordukhovichTangWang-tilt-2025} verifies the relationships in
(i)$\Leftrightarrow$(ii)$\Leftrightarrow$(iii)$\Rightarrow$(iv). It follows from Clarke’s inverse function theorem \cite{Clarke1976} combined with \cite[Lemma~5.1]{Tangwang2024} that (iv)$\Rightarrow$(v). Implication (v)$\Rightarrow$(vi) follows from \cite[Corollary~2.2]{Robinson1980} similarly. Repeating the proof of (viii)$\Rightarrow$(i'), (viii)$\Rightarrow$(vii) in Theorem \ref{theorem-aubin-equivalence-conditions} verifies the fulfillment of implications (vi)$\Rightarrow$(i) and (vi)$\Rightarrow$(vii). From \cite[Theorems~1.5, 3.1, and 4.1]{BenkoRockafellar2024}, we finally deduce that (vii)$\Rightarrow$(vi) and  thus complete the proof of the theorem.
\end{proof}\vspace*{-0.05in}

Summarizing the results of Theorems~\ref{theorem-aubin-property-SOQC-SSOSC} and \ref{theorem-aubin-equivalence-conditions}, we finally arrive at the equivalences between the major well-posedness properties of KKT systems associated with general ${\cal C}^2$-cone reducible problems of composite optimization and their complete second-order characterizations.\vspace*{-0.05in}

\begin{corollary}
$\bar{x}$ be a local minimizer of problem \eqref{comp-prob} with $\bar{u}\in\Lambda(\bar{x},0,0)$ as a corresponding Lagrange multiplier, and let $g$ be $\mathcal{C}^{2}$-cone reducible while satisfying {\rm(A2)} at $F(\bar{x})$ for $\bar{u}$. Then the following assertions are equivalent:
	
{\bf(i)} The KKT solution mapping $S_{\operatorname{KKT}}$ has the Aubin property at the origin for $(\bar{x},\bar{u})$. There exist  neighborhoods $\mathcal{U}$ of $(\bar{x},\bar{u})$ and $\mathcal{V}$ of the origin such that for $\delta\in\mathcal{V}$, the point $x\in\{x\;|\;(x,u)\in S_{\operatorname{KKT}}(\delta,0)\cap\mathcal{U}\}$ is a local minimizer of problem \eqref{comp-prob-perturb} with $(a,b)=(\delta,0)$.
	
{\bf(ii)} SOQC \eqref{SOQC} holds together with SSOSC \eqref{SSOSC-SVF} at $(\bar x,\bar u)$.
	
{\bf(iii)} SOQC \eqref{SOQC} is satisfied at the point $\bar{x}$, which is a tilt-stable local minimizer of the function $\varphi$ from \eqref{comp-prob} with some modulus $\kappa>0$.

{\bf(iv)} Each matrix from the generalized Jacobian $\mathcal{J}Z(\bar{x},\bar{u})$ of $Z$ in \eqref{Z} is nonsingular.

{\bf(v)} The pair $(\bar{x},\bar{u})$ is a strongly regular point of the KKT system \eqref{eq:kkt-perturbed-cop}.

{\bf(vi)} The constraint nondegeneracy condition \eqref{nondegeneracy-cond} holds at $\bar{x}$ for $\bar{u}$ and $\bar{x}$ is a fully stable local minimizer of problem \eqref{comp-prob-perturb}.

{\bf(vii)} The solution $(\bar{x},\bar{u})$ to the KKT system \eqref{eq:kkt-perturbed-cop} is fully stable with respect to the origin.
	
{\bf(viii)} The KKT mapping $\Phi$ from \eqref{Phi} is strongly metrically regular at $(\bar{x},\bar{u})$ for $(0,0)$.
\end{corollary}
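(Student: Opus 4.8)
The plan is to obtain all eight equivalences by overlaying the two principal theorems of this section, whose combined hypotheses are automatically satisfied under the present assumptions. First I would observe that the $\mathcal{C}^{2}$-cone reducibility of $g$ forces assumption (A1) to hold at $F(\bar{x})$ by \cite[Proposition~3.1]{Tangwang2024}; since (A2) is assumed at $F(\bar{x})$ for $\bar{u}$ as well, both (A1) and (A2) are in force in addition to the $\mathcal{C}^{2}$-cone reducibility of $g$. Consequently, every qualifying hypothesis required in Theorem~\ref{theorem-aubin-property-SOQC-SSOSC} and Theorem~\ref{theorem-aubin-equivalence-conditions} is met, so both results apply verbatim to the point $(\bar{x},\bar{u})$.

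Next I would invoke Theorem~\ref{theorem-aubin-property-SOQC-SSOSC} in its full strength, including the $\mathcal{C}^{2}$-reducible part, which after identifying the labelings delivers the equivalences among the corollary's assertions (i), (iii), (v), (vi), (vii), and (viii). Here the corollary's (iii) is Theorem~\ref{theorem-aubin-property-SOQC-SSOSC}(iv), while (v), (vi), (vii), and (viii) correspond term by term. In parallel, since (A1) and (A2) both hold, Theorem~\ref{theorem-aubin-equivalence-conditions} supplies the equivalences among assertions (i), (ii), (iii), (iv), (v), (vii), and (viii), where (ii) is the SSOSC written through the second-order variational function $\Gamma_{g}$ in \eqref{SSOSC-SVF}, (iv) is the nonsingularity of every matrix in $\mathcal{J}Z(\bar{x},\bar{u})$, and (viii) is the strong metric regularity recorded there as Theorem~\ref{theorem-aubin-equivalence-conditions}(vi).

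Finally I would splice the two chains together. The assertions (i), (iii), (v), (vii), and (viii) occur in both theorems and thus serve as shared anchors; consequently the two equivalence chains merge into a single one, so that all of (i)--(viii) become mutually equivalent, with no condition left unlinked. I expect the only delicate point to be the bookkeeping: one must confirm that the specific versions of SOQC, SSOSC, tilt stability, and full stability named in the corollary coincide exactly with those established above, and that the differing internal numbering of the two source theorems is reconciled without mismatch. Once this matching is verified, no further variational-analytic argument is needed, and the proof is complete.
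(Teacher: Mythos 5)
Your proof is correct and takes exactly the paper's approach: the corollary appears there as a direct summary of Theorems~\ref{theorem-aubin-property-SOQC-SSOSC} and \ref{theorem-aubin-equivalence-conditions}, with the $\mathcal{C}^{2}$-cone reducibility of $g$ yielding assumption (A1) via \cite[Proposition~3.1]{Tangwang2024}, so that both theorems apply and their equivalence chains merge through the shared assertions. Your bookkeeping is also accurate, in particular the identifications of the corollary's (iii) with Theorem~\ref{theorem-aubin-property-SOQC-SSOSC}(iv) and of the corollary's (viii) with Theorem~\ref{theorem-aubin-equivalence-conditions}(vi).
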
\vspace*{-0.3in}

\section{Concluding Remarks and Future Research}\label{sec:Conclusion}\vspace*{-0.05in}

This paper establishes {\em equivalent relationships} between various {\em well-posedness} properties of general composite optimization problems and associated KKT systems with providing {\em second-order characterizations} of these properties via pointbased constructions of {\em second-order generalized differentiation}. In particular, we establish close relationships between the {\em Aubin property} and {\em strong regularity} of solution mappings to KKT systems, as well as the {\em tilt stability} of local minimizers, in the {\em ${\cal C}^2$-cone reducible} setting of composite optimization with {\em parabolically regular} data. It is also shown, among other results, that these properties and their equivalents are characterized by the simultaneous fulfillment of certain {\em constraint nondegeneracy} and {\em robust second-order subdifferential conditions} expressed {\em precisely} at the reference solution and satisfying {\em comprehensive calculus} rules. The obtained results are of a fundamental nature, while being instrumental for {\em stability} and {\em sensitivity analysis} of optimization-related problems, as well as for the design and justification of {\em numerical algorithms} in constrained optimization.\vspace*{0.03in}

Our future research will be conducted in various directions. Among them, we plan to reveal---besides the broad settings discussed above---other classes of composite optimization problems satisfying assumption (A2) with {\em explicit computations} of the second-order variational function $\Gamma_f$. We also intend to fully investigate the possibility to eliminate the assumption on the KKT solution mapping $S_{KKT}$ imposed in Theorem~5.1(i'), Theorem~5.2(i), and Corollary~5.1(i).
Recalling that the strong regularity, Lipschitzian stability of KKT systems, and tilt stability of local minimizers play a crucial role in justifying numerical algorithms of constrained optimization, our attention will be paid to the implementation of these ideas and the obtained results to {\em new classes} of composite optimization problems. Moreover, we intend to investigate some of the well-posedness properties studied in this paper {\em without imposing nondegeneracy} conditions.\\[0.5ex]
{\bf Acknowledgments}. The authors are gratefully indebted to Helmut Gfrerer for observing serious inaccuracies in Section~5 and very useful suggestions that allowed us to significantly improve the original presentation.\vspace*{-0.2in} 

\section*{Disclosure of interest}
Research of Boris Mordukhovich was partly supported by the US National Science Foundation under grant DMS-2204519 and by the Australian Research Council under Discovery Project DP250101112. Research of Peipei Tang was supported by Zhejiang Provincial Natural Science Foundation of China
under Grant No. LMS26A010022. Research of Chengjing Wang was partly supported by the National Natural Science Foundation of China under Grant U21A20169, by the Zhejiang Provincial Natural Science Foundation of China under Grant LTGY23H240002.
The authors report there are no competing interests to declare.\vspace*{-0.2in}
\section*{Data Availability}
Data sharing is not applicable to this paper as no new data were created or analyzed.\vspace*{-0.2in}

\bibliographystyle{amsplain}
\bibliography{ref.bib}

\end{document}